\numberwithin{equation}{section}
\newtheorem{theorem}{Theorem}[section]
\newtheorem{corollary}[theorem]{Corollary}
\newtheorem{lemma}[theorem]{Lemma}
\newtheorem{proposition}[theorem]{Proposition}
\newtheorem{example}[theorem]{Example}
\newtheorem{definition}[theorem]{Definition}
\newproof{proof}{Proof}
\journal{.}
\begin{document}

\begin{frontmatter}

\title{A class of core inverses associated with Green's relations in semigroups}

\author[1]{Huihui Zhu\corref{cor}}
\ead{hhzhu@hfut.edu.cn}
\cortext[cor]{Corresponding author}

\author[1]{Bing Dong}
\ead{bdmath@163.com}

\address[1]{School of Mathematics, Hefei University of Technology, Hefei 230009, China.}

\begin{abstract}
Let $S$ be a $*$-monoid and let $a,b,c$ be elements of $S$. We say that $a$ is $(b,c)$-core-EP invertible if there exist some $x$ in $S$ and some nonnegative integer $k$ such that $cax(ca)^{k}c=(ca)^{k}c$, $x{\mathcal R}(ca)^{k}b$ and $x{\mathcal L}((ca)^{k}c)^{*}$. This terminology can be seen as an extension of the $w$-core-EP inverse and the $(b,c)$-core inverse. It is explored when $(b,c)$-core-EP invertibility implies $w$-core-EP invertibility. Another accomplishment of our work is to establish the criteria for the $(b,c)$-core-EP inverse of $a$ and to clarify the relations between the $(b,c)$-inverse, the core inverse, the core-EP inverse, the $w$-core inverse, the $(b,c)$-core inverse and the $(b,c)$-core-EP inverse. As an application, we improve a result in the literature focused on $(b,c)$-core inverses. We then establish the criterion for the $(B,C)$-core-EP inverse of $A$ in complex matrices, and give the solution to the system of matrix equations.
\end{abstract}

\begin{keyword}
Green's relations \sep semigroups \sep $(b,c)$-core inverses \sep the inverse along an element \sep $(b,c)$-inverses \sep  cardinality \sep matrix equations

\MSC[2010] 16W10 \sep 20M12 \sep 15A09 \sep 16D25 \sep 16E50

\end{keyword}

\end{frontmatter}

\section{History and motivation}

A framework of the theory of generalized inverses was developed in 1903 by Fredholm in his study on integral equations \cite{Fredholm1903}. In 1906, Moore \cite{Moore1920} introduced the ``reciprocal inverse'' of a matrix. This work was published in 1920. In 1955, Penrose \cite{Penrose1955} showed that the ``reciprocal inverse'' in the sense of Moore can be expressed by a system of four equations. Over the years, several kinds of generalized inverses in different algebraic systems were introduced, for instance the Drazin inverse of an associative ring element as well as a semigroup element \cite{Drazin1958}, the generalized Moore--Penrose inverse of matrices over an integer domain \cite{Prasad1992}, the generalized Drazin inverse of an element in Banach algebra \cite{Koliha1996}. An explosive development in the theory of generalized inverses came in the 1950s with a wide range of applications, for instance networks, linear estimation, Markov chains, coding theory and cryptography, see \cite{Ben1976, BD1953,Dawson1997,Elliott2006,Hunter2014, Marquaridt1970, Meyer1975, Sun2001, Wu1998} and the references therein; for their subtle connections with clean decompositions of ring elements, we refer to, for example, \cite{Lam2014, Lam2019, Zhu2019}.

During the first half of the 2010s, a couple of types of ``mixed'' generalized inverses in complex matrices have been introduced, such as the core inverse \cite{Baksalary2010}, the core-EP inverse \cite{Prasad2014} (named pseudo core inverse \cite{Gao2013} in a $*$-ring) and the DMP inverse \cite{Malik2014}. The definitions can be viewed as a combination of the classical generalized inverses (precisely, the group inverse and the Drazin inverse) and the Moore--Penrose inverse.

To explore what is the mixed version of new generalized inverses (Mary's inverse along an element \cite{Mary2011} and Drazin's $(b,c)$-inverse \cite{Drazin2012}) and a $\{1,3\}$-inverse, the $w$-core inverse \cite{Zhu2023} was introduced in a $*$-monoid. It is exactly both invertible along an element and $\{1,3\}$-invertible. Recently Zhu \cite{Zhu2024} introduced a more general $(b,c)$-core inverse in a $*$-monoid, see \cite{Zhu20231,Zhu20232, Zhu20241} for a detailed description. Subsequently, another generalization of the $w$-core inverse, known as the $w$-core-EP inverse, arose in the work of Mosi\'{c} et al. \cite{Mosic2023} in a $*$-ring. The three new generalized inverses extend the core inverse, the core-EP inverse and the DMP inverse in the ring of complex matrices.

This motivates us to consider the question whether we can give a unified theory to encompass the $w$-core-EP inverse and the $(b,c)$-core inverse. Precisely, the motivation of the paper comes from the following incomplete diagram of the related notions:

\begin{center}
\begin{tikzpicture}
[node distance=30pt] \node[draw,text width=8.5em, text height=0.8em, text centered] (core) {core inverses}; \node[draw,text width=8.5em, text height=0.8em, text centered, below=of core] (w) {$w$-core inverses}; \node[draw,text width=8.5em, text height=0.8em, text centered, right=30pt of core] (core-EP) {core-EP inverses}; \node[draw,text width=8.5em, text height=0.8em, text centered, right=30pt of core-EP] (w-EP) {$w$-core-EP inverses}; \node[draw,text width=8.5em, text height=0.8em, text centered, below=of core-EP] (bc-core) {$(b,c)$-core inverses};\node[draw,text width=8.5em, text height=0.8em, text centered, below=of w-EP] (?) {?}; \draw[->,thick] (core) -- (w); \draw[->,thick] (core) -- (core-EP); \draw[->,thick] (core-EP) -- (w-EP); \draw[->,thick] (w) -- (bc-core); \draw[->,thick] (core-EP) -- (bc-core);\draw[->,thick] (w-EP) -- (?); \draw[->,thick] (bc-core) -- (?);
\end{tikzpicture}
\begin{center} Figure 1. Implications between several types of core inverses. \end{center}
\end{center}

The main purpose of this paper is to introduce and investigate the $(b,c)$-core-EP inverse (see Definition \ref{3.1} below) in order to complete Figure 1. All implications in Figure 1 follow from \cite{Prasad2014,Zhu2024,Zhu2023}. When the above diagram is done, it will be helpful in enriching the theory of generalized inverses may be appropriate to given potential applications in the mathematical literature.

\section{Preliminaries}

Let us first recall some necessary background material. Let the symbol $\mathbb{N}=\{0,1,2,3, \cdots\}$ be the set of all natural numbers, and let $\mathbb{N}^{*}=\{1,2,3, \cdots\}$ be the set of all natural numbers without zero. Throughout this paper, we'll use the widely accepted shorthand ``iff'' for ``if and only if'' in the text. Other standard terminology and notation in semigroup and ring theory follow those in \cite{Anderson1974,Howie1955}.

Let $S$ be a semigroup. We define $S^1 = \left\{
    \begin{aligned}
    &S & & {\rm ~if ~S ~is~ a~ monoid,} \cr
    &  S \cup \{1\}& &{ \rm ~otherwise.}
    \end{aligned}
\right.$
Green's relations and Green's preorders were first defined and studied in \cite{Green1951}. They are an important tool in analysing the structure of semigroups, see, e.g., \cite{Brookes2024, Mary2023}. The three Green's relations ${\mathcal L}$, ${\mathcal R}$ and ${\mathcal H}$, and the three Green's preorders $\leq_{\mathcal L}$, $\leq_{\mathcal R}$, $\leq_{\mathcal H}$ are defined as follows:

(1) $a\leq_{\mathcal L}b$ iff $S^1a \subseteq S^1b$, iff $a=xb$ for some $x\in S^1$.

(2) $a\leq_{\mathcal R}b$ iff $aS^1\subseteq bS^1$, iff $a=by$ for some $y\in S^1$.

(3) $a\leq_{\mathcal H}b$ iff $a\leq_{\mathcal L} b$ and $a\leq_{\mathcal R}b$.

(4) $a\mathcal{L}b$ iff $S^1a=S^1b$, iff $a=xb$ and $b=ya$ for some $x,y\in S^1$.

(5) $a\mathcal{R} b$ iff $aS^1=bS^1$, iff $a=bx$ and $b=ay$ for some $x,y\in S^1$.

(6) $a\mathcal{H} b$ iff $a{\mathcal L}b$ and $a{\mathcal R}b$.

Based on Green's preorders, the well known inverse along an element was first explicitly introduced by Mary in his celebrated paper \cite{Mary2011}. For any $a,d\in S$, the element $a$ is invertible along $d$ if there exists some $b\in S$ such that $bad=d=dab$ and $b\leq_{\mathcal H} d$. Such an element $b$ is called the inverse of $a$ along $d$. It is unique if it exists, and is denoted by $a^{\parallel d}$.

In his influential work \cite{Drazin2012}, Drazin introduced the $(b,c)$-inverse in a semigroup. Given any $a,b,c\in S$, $a$ is said to be $(b, c)$-invertible if there exists $y\in S$ such that $y\in bSy\cap ySc$, $yab=b$ and $cay=c$. Such an $y$ is called the $(b, c)$-inverse of $a$. It is unique when it exists, and is denoted by $a^{(b,c)}$. We denote by $S^{(b,c)}$ the set of all $(b,c)$-invertible elements in $S$. In the article \cite{Drazin2016} he proved, among others, a theorem which states that $a$ is $(b, c)$-invertible iff $a$ is both left $(b,c)$-invertible (equivalently, $b\in Scab$) and right $(b,c)$-invertible (equivalently, $c \in cabS$). In particular, the $(b,b)$-inverse of $a$ is exactly the inverse of $a$ along $b$. Both the inverse along an element and the $(b,c)$-inverse have become a staple of generalized inverses due to their inclusiveness and connection to the regularity of elements, for example, for any $a\in S$, an element $a$ is group invertible (a.k.a. strongly regular) iff $a$ is $(a,a)$-invertible, iff $a$ is invertible along $a$; $a$ is Drazin invertible (a.k.a. strongly $\pi$-regular) iff $a$ is $(a^n,a^n)$-invertible, iff $a$ is invertible along $a^n$ for some $n\in \mathbb{N^*}$; when $S$ is a $*$-monoid, $a$ is Moore--Penrose invertible (a.k.a. $*$-regular) iff $a$ is left (resp., right) $(a^*,a^*)$-invertible iff $a$ is left (resp., right) invertible along $a^*$. These equivalences can be found in references \cite{Drazin2012,Mary2011,Zhu2016}.

We assume that $R$ is a unital $*$-ring, that is a unital ring $R$ with an involution $*:R \rightarrow R$ satisfying $(a^*)^*=a$, $(ab)^{*}=b^{*}a^{*}$ and $(a+b)^*=a^*+b^*$ for all $a,b\in R$.

Originally introduced by Baksalary and Trenkler \cite{Baksalary2010} for complex matrices in 2010, and then extended to a $*$-ring $R$ (cf. \cite{Rakic2014}),  core inverse plays an important part in the theory of generalized inverses. The research of core inverses blossomed into a popular topic in the theory of generalized inverses ever since it was defined. In 2014, Manjunatha Prasad and Mohana \cite{Prasad2014} introduced the core-EP inverse of a complex matrix, extending the core inverse. Research interests in related problems of the core-EP inverse have been established in \cite{Dolinar2019, Kyrchei2021, Ma2018, Mosic20201}.

Recently in \cite{Mosic2023}, Mosi\'{c} et al. introduced the $w$-core-EP inverse in $R$. Let $a,w\in R$. The element $a$ is called $w$-core-EP invertible if there exists some $x\in R$ satisfying $awx^{2}=x$, $x(aw)^{k+1}a=(aw)^ka$ and $(awx)^{*}=awx$ for some $k\in \mathbb{N}$. Such an $x$, usually denoted by $a_{w}^{\tiny{\textcircled{D}}}$, is unique if it exists, and is called the $w$-core-EP inverse of $a$. By $R_{w}^{\tiny{\textcircled{D}}}$ we denote the set of all $w$-core-EP invertible elements in $R$. The smallest nonnegative integer $k$ is called the $w$-core-EP index of $a$, and is denoted by ${\rm i}_{w}(a)$. It follows from \cite{Mosic2023} that when $k=0$, the $w$-core-EP inverse is reduced to the $w$-core inverse, the $w$-core-EP inverse is simplified to the pseudo core inverse provided that $w=1$. It turns out that the pseudo core inverse is exactly the core-EP inverse in the ring of complex matrices.

Throughout, unless otherwise indicated, $S$ denotes a $*$-monoid, that is a semigroup $S$ which contains the unity $1$ and an involution $*: S\rightarrow S$ satisfying $(a^*)^*=a$ and $(ab)^{*}=b^{*}a^{*}$  for all $a,b\in S$.

Following the terminology in \cite[Definition 2.1]{Zhu2024}, for any $a,b,c\in S$, the element $a$ is called $(b,c)$-core invertible if there exists some $x\in S$ such that $caxc=c$, $xS=bS$ and $Sx=Sc^{*}$. Such an $x$ is called a $(b,c)$-core inverse of $a$. It is unique if it exists and is denoted by $a_{(b,c)}^{\tiny{\textcircled{\#}}}$. Examples include all $w$-core invertible elements, $w$-core-EP invertible elements and all Moore--Penrose invertible elements \cite{Penrose1955}.

Here is an outline of the paper. In Section 1, the origin and development of the theory of generalized inverses and the motivation of the paper are given. In Section 2 we recall some useful notations and definitions. In Section 3, for any $a,b,c\in S$, we formulate and prove that the $(b,c)$-core-EP inverse of $a$ is the uniquely determined $x\in S$ satisfying $cax(ca)^{k}c=(ca)^{k}c$, $x{\mathcal R}(ca)^{k}b$ and $x{\mathcal L}((ca)^{k}c)^{*}$ for some $k\in \mathbb{N}$. We give some criteria, basic properties as well as its formulae in terms of a class of extended Green's preorders. Also, the connections between the extended Green's preorders and the minus partial order are given. We then show that there are only two cases for the cardinality of ${\rm I}_{(b,c)}(a)$, the set of all $k\in \mathbb{N}$ satisfying the definition above. Precisely, it is proved that ${\rm card}({\rm I}_{(b,c)}(a))=1$ or ${\rm card}({\rm I}_{(b,c)}(a))=\infty$ whenever ${\rm card}({\rm I}_{(b,c)}(a))>1$. Particular emphasis is given to the connection between the $(b,c)$-core-EP inverse and the $w$-core-EP inverse. It is concluded that the $(b,c)$-core-EP invertibility of $a$ concludes the $a$-core-EP invertibility of $c$ provided that ${\rm card}({\rm I}_{(b,c)}(a))>1$. Moreover, the $(b,c)$-core-EP index of $a$ is no more than the $a$-core-EP index of $c$, and an example is constructed to illustrate this inequality. A list of criteria for the $(b,c)$-core-EP-inverse of $a$ are established, among these, it is shown in Theorems \ref{3.8} and \ref{new one-sided} that $a$ is $(b,c)$-core-EP invertible with $k\in {\rm I}_{(b,c)}(a)$ iff the product $((ca)^kc)^{(1,3)}ca$ is $((ca)^{k}b,(ca)^{k}c)$-invertible and $(ca)^{k}c$ (resp., $(ca)^{k+1}$ or $(ca)^{k+1}b$) is $\{1,3\}$-invertible iff $a$ is left $((ca)^{k}b,c)$-invertible and $(ca)^{k}c$ is $\{1,3\}$-invertible with $abr\in (ca)^{k}c\{1,3\}$ for some $r\in S$. Then an explicit formula of the $(b,c)$-core-EP inverse of $a$ is given via a class of Drazin's $(b,c)$-inverse and a $\{1,3\}$-inverse. As a consequence, a corollary on $(b,c)$-core inverse is given. This improves the classical result in \cite[Theorem 2.7]{Zhu2024}. Another accomplishment of this section is to outline and clarify the relations between the $(b,c)$-inverse, the core inverse, the core-EP inverse, the $w$-core inverse, the $(b,c)$-core inverse and our defined $(b,c)$-core-EP inverse, which enriches the known literature in this area. When $S$ is a $*$-ring, the $(b,c)$-core-EP inverse is described in terms of the direct sum decomposition of the left (right) annihilators and ideals. Section 4 focuses on presenting the criterion for the $(b,c)$-core inverse by the rank condition in the context of complex matrices. We then establish the unique solution to the system of matrix equations and the constrained matrix approximation problem in the Euclidean norm. We close the paper by exhibiting the notion of the dual $(b,c)$-core-EP inverse of $a$ in Section 5.

\section{The $(b,c)$-core-EP inverse}

We start by briefly introducing the notion of the $(b,c)$-core-EP inverse of an element by using Green's relations for a $*$-monoid. In this case, the techniques such as matrix decompositions, dimensional analysis and spectral theory cannot be used. The results in this paper are proved by a semigroup theoretical and ring theoretical languages.

\begin{definition}\label{3.1}
Let $a,b,c\in S$. We say that $a$ is $(b,c)$-core-EP invertible if there exist some $x\in S$ and $k\in \mathbb{N}$ such that
\begin{center}
$cax(ca)^{k}c=(ca)^{k}c$, $x{\mathcal R}(ca)^{k}b$ and $x{\mathcal L}((ca)^{k}c)^{*}$.
\end{center} Such an $x$ is called a $(b,c)$-core-EP inverse of $a$.
\end{definition}

By the symbol ${\rm I}_{(b,c)}(a)$ we denote the set consisting of all $k\in \mathbb{N}$ satisfying Definition \ref{3.1} above, the smallest integer $k\in {\rm I}_{(b,c)}(a)$ is called the $(b,c)$-core-EP index of $a$ and is denoted by ${\rm i}_{(b,c)}(a)$.

When ${\rm I}_{(b,c)}(a)=\{0\}$, it turns out that the $(b,c)$-core-EP inverse is exactly the $(b,c)$-core inverse.

We remind the reader that core inverses, core-EP inverses, and $w$-core-EP inverses are instances of $(b,c)$-core-EP inverses. In Theorem \ref{four} below, we will give a detailed description about their connections.

Firstly, we focus here on introducing the following binary relations. To our knowledge, there was no previous work in this line.

\begin{definition} \label{new Def} Let $a,b\in S$.

\emph{(i)} We say that $a$ is below $b$ under $\mathcal R^{\rm o}$, write $a\leq_{\mathcal R^{\rm o}} b$, if $bs=bt$ implies $as=at$ for any $s,t\in S$.

\emph{(ii)} We say that $a$ is below $b$ under $\mathcal L^{\rm o}$, write $a\leq_{\mathcal L^{\rm o}}b$, if $sb=tb$ implies $sa=ta$ for any $s,t\in S$.

\emph{(iii)} We say that $a$ is below $b$ under $\mathcal H^{\rm o}$, write $a\leq_{\mathcal H^{\rm o}}b$, if $a\leq_{\mathcal L^{\rm o}}b$ and $a\leq_{\mathcal R^{\rm o}}b$.
\end{definition}

\begin{proposition} Any one of the three binary relations in Definition {\rm \ref{new Def}} is a preorder.
\end{proposition}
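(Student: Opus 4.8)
The plan is to show directly that each of the three binary relations satisfies the two defining properties of a preorder, namely reflexivity and transitivity. Since $\leq_{\mathcal{H}^{\rm o}}$ is defined as the conjunction of $\leq_{\mathcal{L}^{\rm o}}$ and $\leq_{\mathcal{R}^{\rm o}}$, it suffices to establish that $\leq_{\mathcal{R}^{\rm o}}$ and $\leq_{\mathcal{L}^{\rm o}}$ are preorders; the corresponding properties for $\leq_{\mathcal{H}^{\rm o}}$ then follow immediately, since the intersection of two reflexive (resp.\ transitive) relations is again reflexive (resp.\ transitive). Moreover, the two cases are perfectly dual: $\leq_{\mathcal{L}^{\rm o}}$ is obtained from $\leq_{\mathcal{R}^{\rm o}}$ by reversing all products, so I would prove the claim for $\leq_{\mathcal{R}^{\rm o}}$ in full and then remark that $\leq_{\mathcal{L}^{\rm o}}$ is handled by the left-right symmetric argument.

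For reflexivity of $\leq_{\mathcal{R}^{\rm o}}$, I would fix $a\in S$ and check that $a\leq_{\mathcal{R}^{\rm o}}a$: the defining implication reads ``$as=at$ implies $as=at$ for all $s,t\in S$,'' which is trivially true. For transitivity, I would take $a,b,d\in S$ with $a\leq_{\mathcal{R}^{\rm o}}b$ and $b\leq_{\mathcal{R}^{\rm o}}d$, and aim to show $a\leq_{\mathcal{R}^{\rm o}}d$. So let $s,t\in S$ satisfy $ds=dt$. By the hypothesis $b\leq_{\mathcal{R}^{\rm o}}d$, the equality $ds=dt$ yields $bs=bt$; then by the hypothesis $a\leq_{\mathcal{R}^{\rm o}}b$, the equality $bs=bt$ yields $as=at$. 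Hence $ds=dt$ implies $as=at$ for all $s,t\in S$, which is exactly $a\leq_{\mathcal{R}^{\rm o}}d$. The argument is a straightforward chaining of the two implications.

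I do not expect any genuine obstacle here: all three relations are defined by universally quantified implications of the form ``a cancellation equation for the larger element forces the same equation for the smaller one,'' and such implication-based relations are preorders essentially by the formal properties of implication (reflexivity of $\Rightarrow$ gives reflexivity of the relation, and composition of implications gives transitivity). The only point worth stating carefully is that antisymmetry is \emph{not} being claimed, so I would not attempt to verify it; a preorder requires only reflexivity and transitivity. I would therefore present the proof compactly, spelling out reflexivity and transitivity for $\leq_{\mathcal{R}^{\rm o}}$, invoking duality for $\leq_{\mathcal{L}^{\rm o}}$, and deducing the case of $\leq_{\mathcal{H}^{\rm o}}$ from the fact that a finite intersection of preorders is a preorder.
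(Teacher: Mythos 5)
Your proof is correct and follows essentially the same route as the paper: reflexivity is immediate from the trivial implication, and transitivity is the same chaining of the two hypothesized implications, with the other two relations handled by symmetry (the paper likewise proves only the $\leq_{\mathcal R^{\rm o}}$ case and leaves $\leq_{\mathcal L^{\rm o}}$ and $\leq_{\mathcal H^{\rm o}}$ to an analogous argument). Your explicit remark that an intersection of preorders is a preorder is a slightly cleaner way of dispatching the $\leq_{\mathcal H^{\rm o}}$ case, but it is not a different method.
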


\begin{proof}
Take (i) for example, as (ii) and (iii) could be proved by a similar way.

To prove that $a\leq_{\mathcal R^{\rm o}}b$ is a preorder. It suffices to show (1) the reflexivity ($a\leq_{\mathcal R^{\rm o}}a$) and (2) the transitivity ($a\leq_{\mathcal R^{\rm o}}b$ and $b\leq_{\mathcal R^{\rm o}}c$ imply $a\leq_{\mathcal R^{\rm o}}c$) for any $a,b,c\in S$.

(1) The reflexivity is clear.

(2) Assume $a\leq_{\mathcal R^{\rm o}}b$ and $b\leq_{\mathcal R^{\rm o}}c$. Say $cs=ct$ for any $s,t\in S$, then $bs=bt$ since $b\leq_{\mathcal R^{\rm o}}c$, which concludes $as=at$ by $a\leq_{\mathcal R^{\rm o}}b$. So, $a\leq_{\mathcal R^{\rm o}}c$.
\hfill$\Box$
\end{proof}

It should be noted that the preorders in Definition \ref{new Def} are extensions of Green's preorders, that is (a) $a\leq_{\mathcal R}b \Rightarrow a\leq_{\mathcal L^0}b$; (b) $a\leq_{\mathcal L}b \Rightarrow a\leq_{\mathcal R^0}b$; (c) $a\leq_{\mathcal H}b \Rightarrow a\leq_{\mathcal H^0}b$. The reverse directions of (a), (b) and (c) don't hold in general without additional conditions. By enabling $b\in bSb$, in which case, $b$ is called regular in the sense of von Neumann, then the converse directions of (a), (b) and (c) hold. Any $x\in S$ satisfying $b=bxb$ is called an inner inverse or \{1\}-inverse of $b$, and is usually denoted by $b^-$. By $b\{1\}$ we denote the set of all inner inverses of $b$.

We next establish the equivalence between Green's well known preorders and the preorders in Definition \ref{new Def} with additional assumptions.

\begin{proposition} \label{3.2}
Let $a,b\in S$ with $b$ regular. Then we have

\emph{(i)} $a\leq_{\mathcal R}b$ iff $a\leq_{\mathcal L^{\rm o}}b$.

\emph{(ii)} $a\leq_{\mathcal L}b$ iff $a\leq_{\mathcal R^{\rm o}}b$.

\emph{(iii)} $a\leq_{\mathcal H}b$ iff $a\leq_{\mathcal H^{\rm o}}b$.
\end{proposition}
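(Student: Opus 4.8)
The plan is to prove each of the three equivalences by splitting into a forward and a converse direction, observing that the forward implications are precisely (a), (b), (c) recorded immediately before the statement and already hold in any $*$-monoid with no regularity hypothesis. Hence the substantive content lies entirely in the converse directions, where the regularity of $b$ is brought to bear. Throughout I would fix an inner inverse $b^-\in b\{1\}$, so that $b=bb^-b$, and exploit the two idempotents $bb^-$ and $b^-b$ as test elements.

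For the forward direction of (i), if $a\leq_{\mathcal R}b$ then $a=by$ for some $y\in S^1$, and $sb=tb$ yields $sby=tby$, i.e. $sa=ta$, so $a\leq_{\mathcal L^{\rm o}}b$. For the converse I would assume $a\leq_{\mathcal L^{\rm o}}b$ and feed the test pair $s=bb^-$, $t=1$ into the defining implication: since $bb^-b=b=1\cdot b$ we have $sb=tb$, whence $sa=ta$, that is $bb^-a=a$. Writing $a=b(b^-a)$ with $b^-a\in S^1$ then gives $a\leq_{\mathcal R}b$.

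Part (ii) is entirely dual. The forward direction uses $a=xb$ and left-multiplication by $x$, and for the converse I would take the test pair $s=b^-b$, $t=1$; from $bb^-b=b$ one obtains $bs=bt$, so $a\leq_{\mathcal R^{\rm o}}b$ forces $as=at$, i.e. $ab^-b=a$, whence $a=(ab^-)b$ and $a\leq_{\mathcal L}b$. Part (iii) then follows immediately by conjoining (i) and (ii), since both $\leq_{\mathcal H}$ and $\leq_{\mathcal H^{\rm o}}$ are defined as the conjunction of their respective $\mathcal L$- and $\mathcal R$-components.

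I expect no serious obstacle here; the only point requiring attention is that $\leq_{\mathcal R^{\rm o}}$ and $\leq_{\mathcal L^{\rm o}}$ quantify over $s,t\in S$ rather than over $S^1$, so I must verify that the chosen test elements genuinely lie in $S$. This is immediate, since $bb^-,b^-b\in S$ and, as $S$ is a monoid, $1\in S$ as well.
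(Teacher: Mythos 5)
Your proposal is correct and matches the paper's own argument: the paper likewise dismisses the forward direction as clear and proves the converse of (i) by applying $a\leq_{\mathcal L^{\rm o}}b$ to the identity $1b=bb^-b$ to obtain $a=bb^-a\in bS$, which is exactly your test-pair $s=bb^-$, $t=1$. The paper leaves (ii) and (iii) as exercises, and your dual treatment of (ii) and conjunction argument for (iii) are precisely the intended completions.
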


\begin{proof}
We shall only prove (i) here, and leave the similar proofs of (ii) and (iii) as exercises.

For (i), the forward direction is clear. For the other direction, since $1b=bb^-b$ and $a\leq_{\mathcal L^{\rm o}}b$, we have $1a=bb^-a$, so that $a\leq_{\mathcal R}b$.
\hfill$\Box$
\end{proof}

A semigroup $S$ is regular if all its elements are regular. In such a semigroup, the minus partial order is defined as follows: $a \leq b$ iff $a^-a=a^-b$ and $aa^-=ba^-$ for some $a^{-}\in a\{1\}$. In the set ${\rm E}(S)$ consisting of all idempotents of $S$, $e \leq f$ iff $e=ef=fe$. For any $e,f\in {\rm E}(S)$, it follows that $e\leq_{\mathcal R^{\rm o}}f \Leftrightarrow e=ef$, and $e\leq_{\mathcal L^{\rm o}}f \Leftrightarrow e=fe$. In particular, $e\leq_{\mathcal H^{\rm o}}f \Leftrightarrow e=ef=fe$. So, the minus partial order $e \leq f \Leftrightarrow e\leq_{\mathcal H^{\rm o}}f$ in ${\rm E}(S)$.

As shown in \cite{Drazin2021}, Drazin defined the right annihilator of $a\in S$ as ${\rm ann}_r(a)=\{(r,s)\in S \times S:ar=as\}$, and the left annihilator of $a$ as ${\rm ann}_l(a)=\{(p,q)\in S\times S:pa=qa\}$.

We next describe the connections between annihilators in the sense of Drazin and Green's generalized preorders.

\begin{theorem} For any $a,b\in S$, we have

{\rm (i)} $a\leq_{\mathcal R^{\rm o}}b$ iff ${\rm ann}_r(b) \subseteq {\rm ann}_r(a)$.

{\rm (ii)} $a\leq_{\mathcal L^{\rm o}}b$ iff ${\rm ann}_l(b) \subseteq {\rm ann}_l(a)$.
\end{theorem}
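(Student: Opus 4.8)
The plan is to prove both equivalences by a direct unfolding of the definitions, since each asserts nothing more than a translation between the relational notation of Definition~\ref{new Def} and Drazin's annihilator notation. For part~(i), I would first record that, by definition, ${\rm ann}_r(b)=\{(s,t)\in S\times S: bs=bt\}$ is precisely the set of pairs on which $b$ acts equally from the left, and likewise ${\rm ann}_r(a)=\{(s,t)\in S\times S: as=at\}$.

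Next I would observe that the defining clause of $a\leq_{\mathcal R^{\rm o}}b$, namely that $bs=bt$ forces $as=at$ for all $s,t\in S$, says exactly that every pair belonging to ${\rm ann}_r(b)$ also belongs to ${\rm ann}_r(a)$. Read as a statement about subsets of $S\times S$, this is precisely ${\rm ann}_r(b)\subseteq {\rm ann}_r(a)$, and the equivalence is manifestly reversible: the containment encodes the same universally quantified implication with which we started. Hence (i) holds. For part~(ii) I would run the mirror-image argument with left and right multiplication interchanged. With ${\rm ann}_l(b)=\{(s,t): sb=tb\}$ and ${\rm ann}_l(a)=\{(s,t): sa=ta\}$, the condition $a\leq_{\mathcal L^{\rm o}}b$ --- that $sb=tb$ implies $sa=ta$ for all $s,t\in S$ --- translates verbatim into ${\rm ann}_l(b)\subseteq {\rm ann}_l(a)$.

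Because both parts collapse to matching the quantified implication that defines the preorder against the subset relation of the associated annihilators, I do not expect any genuine obstacle; the only point deserving care is the direction of the containment. Observe that it reverses the roles of $a$ and $b$: the annihilator of the (larger) element $b$ sits inside that of $a$, which is consistent with $\leq_{\mathcal R^{\rm o}}$ being a refinement relation. I would flag this explicitly so that the reader does not misread the inclusion.
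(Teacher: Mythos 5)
Your proof is correct and matches the paper's own argument exactly: both simply unfold the definition of $\leq_{\mathcal R^{\rm o}}$ (resp.\ $\leq_{\mathcal L^{\rm o}}$) and of Drazin's annihilators, observing that the universally quantified implication $bs=bt\Rightarrow as=at$ is verbatim the containment ${\rm ann}_r(b)\subseteq{\rm ann}_r(a)$. The paper proves only (i) and leaves (ii) to symmetry, whereas you spell out both halves --- a harmless difference of presentation, not of substance.
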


\begin{proof}
Again, we here shall only prove (i). Then $a\leq_{\mathcal R^{\rm o}}b$ iff $bs=bt\Rightarrow as=at$ for all $s,t\in S$, iff $(s,t)\in {\rm ann}_r(b)\Rightarrow (s,t)\in {\rm ann}_r(a)$ iff ${\rm ann}_r(b) \subseteq {\rm ann}_r(a)$.
\hfill$\Box$
\end{proof}

Suppose that $S$ is a ring with unity. Then for any $a,b\in S$, we have the following interesting property about the preorders: $a\leq_{\mathcal L^{\rm o}}b$ iff $a-b\leq_{\mathcal L^{\rm o}}b$. Here a proof is given. First assume $a\leq_{\mathcal L^{\rm o}}b$, say with $xb=yb$ for any $x,y\in S$. Then $xa=ya$ and $x(a-b)=y(a-b)$, as required. Conversely, suppose $xb=yb$ for any $x,y\in S$. Then $x(a-b)=y(a-b)$, so that $xa=x(b+a-b)=xb+x(a-b)=yb+y(a-b)=ya$. Therefore, $a\leq_{\mathcal L^{\rm o}}b$. We can also deduce that $a\leq_{\mathcal R^{\rm o}}b$ iff $a-b\leq_{\mathcal R^{\rm o}}b$; and $a\leq_{\mathcal H^{\rm o}}b$ iff $a-b\leq_{\mathcal H^{\rm o}}b$.

Another three binary relations are defined as follows: (1) $a{\mathcal R^{\rm o}}b$: $bs=bt$ iff $as=at$ for any $s,t\in S$, (2) $a{\mathcal L^{\rm o}}b$: $sb=tb$ iff $sa=ta$ for any $s,t\in S$, (3) $a{\mathcal H^{\rm o}}b$ iff $a{\mathcal L^{\rm o}}b$ and $a{\mathcal R^{\rm o}}b$. Not surprisingly, we have:

\begin{proposition} \label{green relations}
Let $a,b\in S$. Then we have \emph{(i)} $a{\mathcal L}b \Rightarrow a{\mathcal R^{\rm o}}b$, \emph{(ii)} $a{\mathcal R}b \Rightarrow a{\mathcal L^{\rm o}}b$, \emph{(iii)} $a{\mathcal H}b \Rightarrow a{\mathcal H^{\rm o}}b$.
\end{proposition}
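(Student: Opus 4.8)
The plan is to reduce each of the three implications to the one-sided preorder facts (a)--(c) already recorded just after Proposition \ref{3.2}, exploiting two observations: each of Green's relations $\mathcal{L}$, $\mathcal{R}$, $\mathcal{H}$ is the conjunction of its preorder with the reversed preorder, and the symmetric relations $\mathcal{R}^{\rm o}$, $\mathcal{L}^{\rm o}$, $\mathcal{H}^{\rm o}$ are precisely the two-directional versions of the one-sided preorders of Definition \ref{new Def}. So the task is simply to apply the known one-directional implications symmetrically.

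For (i), I would start from $a\mathcal{L}b$, which means $a\leq_{\mathcal{L}}b$ and $b\leq_{\mathcal{L}}a$. Applying fact (b) in each direction gives $a\leq_{\mathcal{R}^{\rm o}}b$ and $b\leq_{\mathcal{R}^{\rm o}}a$; that is, for all $s,t\in S$ both $bs=bt\Rightarrow as=at$ and $as=at\Rightarrow bs=bt$ hold. Unwinding Definition \ref{new Def}, the resulting biconditional $bs=bt \Leftrightarrow as=at$ is exactly $a\mathcal{R}^{\rm o}b$.

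Symmetrically, for (ii) I would begin with $a\mathcal{R}b$, i.e. $a\leq_{\mathcal{R}}b$ and $b\leq_{\mathcal{R}}a$, and apply fact (a) in both directions to obtain $a\leq_{\mathcal{L}^{\rm o}}b$ and $b\leq_{\mathcal{L}^{\rm o}}a$; the ensuing biconditional $sb=tb \Leftrightarrow sa=ta$ is $a\mathcal{L}^{\rm o}b$. Part (iii) is then immediate: $a\mathcal{H}b$ unpacks as $a\mathcal{L}b$ and $a\mathcal{R}b$, so (i) and (ii) deliver $a\mathcal{R}^{\rm o}b$ and $a\mathcal{L}^{\rm o}b$, which is by definition $a\mathcal{H}^{\rm o}b$.

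There is essentially no serious obstacle here; the only point requiring care is the symmetry bookkeeping, namely applying the one-sided implication to both $a\leq b$ and $b\leq a$ so that both directions of the ``iff'' in the definitions of $\mathcal{R}^{\rm o}$ and $\mathcal{L}^{\rm o}$ are secured. If instead a self-contained argument is preferred, the same conclusions follow by direct substitution: writing $a=xb$ and $b=ya$ with $x,y\in S^1$ for the $\mathcal{L}$ case, one checks $bs=bt\Rightarrow as=xbs=xbt=at$ and $as=at\Rightarrow bs=yas=yat=bt$, and dually for the $\mathcal{R}$ case using $a=bx$, $b=ay$.
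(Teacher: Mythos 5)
Your proof is correct. The paper actually states Proposition \ref{green relations} with no proof at all (it is introduced only with the phrase ``Not surprisingly, we have''), so there is nothing to diverge from: your argument---applying the one-sided implications (a) and (b) in both directions and observing that $a\mathcal{R}^{\rm o}b$ (resp.\ $a\mathcal{L}^{\rm o}b$) is by definition the conjunction $a\leq_{\mathcal R^{\rm o}}b$ and $b\leq_{\mathcal R^{\rm o}}a$ (resp.\ the $\mathcal{L}^{\rm o}$ analogue), or equivalently the self-contained substitution $a=xb$, $b=ya$ with $x,y\in S^1$---is exactly the routine verification the authors leave implicit. One immaterial slip: the facts (a)--(c) you invoke are recorded just after Definition \ref{new Def}, before Proposition \ref{3.2}, not after it.
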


The following theorem lists some criteria for the $(b,c)$-core-EP inverse in terms of properties of Green's generalized preorders.

\begin{theorem}\label{3.3}
Let $a,b,c\in S$. Then the following conditions are equivalent{\rm:}

\emph{(i)} $a$ is $(b,c)$-core-EP invertible.

\emph{(ii)} there exist some $x\in (ca)^{k}bS$ and some $k\in \mathbb{N}$ such that $cax(ca)^{k}c=(ca)^{k}c$, $x(ca)^{k+1}b=(ca)^{k}b$, $(cax)^{*}=cax$ and $xcax=x$.

\emph{(iii)} there exist some $x\in (ca)^{k}bS$ and some $k\in \mathbb{N}$ such that $cax(ca)^{k}c=(ca)^{k}c$, $x(ca)^{k+1}b=(ca)^{k}b$ and $(cax)^{*}=cax$.

\emph{(iv)} there exist some $x\in (ca)^{k}bS$ and some $k\in \mathbb{N}$ such that $cax(ca)^{k}c=(ca)^{k}c$, $(ca)^{k}b \leq_{\mathcal L^{\rm o}} x$ and $x \leq_{\mathcal R^{\rm o}}((ca)^{k}c)^{*}$.

\emph{(v)} there exists some $x\in S$ and some $k\in \mathbb{N}$ such that $cax(ca)^{k}c=(ca)^{k}c$, $x{\mathcal R}(ca)^{k}b$ and $x \leq_{\mathcal L}((ca)^{k}c)^{*}$.

\emph{(vi)} there exists some $x\in S$ and some $k\in \mathbb{N}$ such that $cax(ca)^{k}c=(ca)^{k}c$, $x{\mathcal R} (ca)^{k}b$ and $x{\mathcal R^{\rm o}}((ca)^{k}c)^{*}$.
\end{theorem}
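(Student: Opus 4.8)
Throughout I fix $k\in\mathbb N$ and abbreviate $p=ca$, $d=(ca)^kc$, $e=(ca)^kb$ and $g=px=cax$; thus $pe=(ca)^{k+1}b$, and the displayed equation $cax(ca)^kc=(ca)^kc$ occurring in every clause reads simply $gd=d$. The plan is first to set up the dictionary that makes the six clauses comparable. Note that $x\in eS$ says exactly $x\leq_{\mathcal R}e$, while $xpe=e$ rewrites as $e=x(pe)\in xS$, i.e. $e\leq_{\mathcal R}x$; hence $x\in eS$ together with $xpe=e$ is precisely $x\,\mathcal R\,e$. I would also isolate two short algebraic computations to be reused. \emph{Fact (A):} if $gd=d$ and $x\leq_{\mathcal L}d^*$ (so $g=px\in Sd^*$, say $g=md^*$), then $gd=md^*d=d$ gives $gg^*=md^*dm^*=dm^*=g^*$; since $gg^*$ is self-adjoint this forces $g=g^*$, that is $(cax)^*=cax$, and then $d^*=(gd)^*=d^*g=d^*px\in Sx$, so $d^*\leq_{\mathcal L}x$. \emph{Fact (B):} if $gd=d$ and $x\leq_{\mathcal R^{\rm o}}d^*$, then taking adjoints yields $d^*x^*p^*=d^*=d^*\cdot 1$, and the defining implication of $\leq_{\mathcal R^{\rm o}}$ gives $xx^*p^*=x$; hence $g=px=(px)(px)^*=gg^*$, so again $g=g^*$, and then $x=xx^*p^*=x(px)=xpx$. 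Finally, in the equation clauses the absorption $xpx=x$ comes for free: writing $x=eu$ and using $e=xpe$ gives $x=eu=(xpe)u=xp(eu)=xpx$.

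With this in hand I would prove $(i)\Leftrightarrow(ii)\Leftrightarrow(iii)$. For $(i)\Rightarrow(ii)$: from $x\,\mathcal L\,d^*$ we have $x\leq_{\mathcal L}d^*$, so Fact (A) produces $(cax)^*=cax$ and $d^*px=d^*$; since $x\in Sd^*$ this gives $xg=x$, i.e. $xpx=x$; then $x\,\mathcal R\,e$ supplies $e=xv$, whence $xpe=(xpx)v=e$, and $x\in eS$, which is all of (ii). Dropping $xcax=x$ gives $(ii)\Rightarrow(iii)$. For $(iii)\Rightarrow(i)$: $x\in eS$ and $xpe=e$ give $x\,\mathcal R\,e$ and $xpx=x$ as above; $(cax)^*=cax$ together with $gd=d$ yields $d^*\leq_{\mathcal L}x$ exactly as in Fact (A). The reverse inequality $x\leq_{\mathcal L}d^*$ is the delicate point and uses the EP factorization: writing $x=(ca)^kbw$, one gets $g=px=(ca)^{k+1}bw$, so $g=g^*=w^*b^*((ca)^*)^{k+1}$, and since $((ca)^*)^{k+1}=a^*d^*\in Sd^*$ we obtain $g\in Sd^*$ and therefore $x=xg\in Sd^*$. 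Thus $x\,\mathcal L\,d^*$ and (i) holds.

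The remaining clauses are handled by passing between Green's relations and their ``$\mathrm o$'' counterparts. For $(iii)\Leftrightarrow(iv)$: from (iii) we already have $x\,\mathcal R\,e$ and $x\,\mathcal L\,d^*$, so the extensions (a),(b) and Proposition \ref{green relations} give $e\leq_{\mathcal L^{\rm o}}x$ and $x\leq_{\mathcal R^{\rm o}}d^*$; conversely, from (iv) Fact (B) delivers $(cax)^*=cax$ and $xpx=x$, so $x$ is regular, and Proposition \ref{3.2} converts $e\leq_{\mathcal L^{\rm o}}x$ into $e\leq_{\mathcal R}x$, i.e. $e=xv$, giving $xpe=(xpx)v=e$. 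For $(i)\Leftrightarrow(v)$: one direction just forgets $d^*\leq_{\mathcal L}x$, and the other is Fact (A), which recovers it. For $(i)\Leftrightarrow(vi)$: $x\,\mathcal L\,d^*\Rightarrow x\,\mathcal R^{\rm o}\,d^*$ by Proposition \ref{green relations}, while from (vi) Fact (B) gives Hermitianness and $xpx=x$, after which $d^*\leq_{\mathcal L}x$ follows from $d^*=d^*px$, and $x\leq_{\mathcal L}d^*$ follows from the EP factorization, using that $x\,\mathcal R\,e$ forces $x\in eS$.

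The one genuinely EP (that is, $k\geq1$) obstacle is the establishment of $x\leq_{\mathcal L}d^*$. The Hermitian idempotent $g=cax$ visibly lies only in $Sc^*$, not in the strictly smaller left ideal $Sd^*=Sc^*((ca)^*)^k$, so the naive argument that suffices for the $(b,c)$-core inverse ($k=0$) breaks down. The resolution is to feed the membership $x\in(ca)^kbS$ through $g=px$, which lands $g$ in $S((ca)^*)^{k+1}=Sa^*d^*\subseteq Sd^*$; this is the only place where the index $k$ and the $(b,c)$-data genuinely interact, and it is the step I expect to require the most care to present cleanly. Everything else reduces, once the regularity $xpx=x$ is secured, to the routine dictionary between Green's relations and the preorders of Definition \ref{new Def}, via Proposition \ref{3.2} and the implications (a)--(c).
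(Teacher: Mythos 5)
Your proposal is correct, and its substance coincides with the paper's own proof: Fact (A) is exactly the computation in the paper's (i)$\Rightarrow$(ii) and (v) steps, Fact (B) is the paper's (iv)$\Rightarrow$(v) argument, and your ``delicate'' EP factorization $x=(ca)^kbw \Rightarrow cax=w^*b^*a^*((ca)^kc)^*\in S((ca)^kc)^*$ is precisely the computation in the paper's (iii)$\Rightarrow$(iv), with Propositions \ref{3.2} and \ref{green relations} invoked at the same junctures. The only difference is bookkeeping: you arrange the equivalences as a hub around (i) with two reusable lemmas, while the paper runs the single cycle (i)$\Rightarrow$(ii)$\Rightarrow\cdots\Rightarrow$(vi)$\Rightarrow$(i), its (vi)$\Rightarrow$(i) step using regularity of $((ca)^kc)^*$ plus Proposition \ref{3.2} where you reuse the factorization.
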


\begin{proof}
To begin with, (ii) $\Rightarrow$ (iii) is a tautology.

(i) $\Rightarrow$ (ii) As $a$ is $(b,c)$-core-EP invertible, then there exist some $x\in S$ and $k\in \mathbb{N}$ such that $cax(ca)^{k}c=(ca)^{k}c$, $x{\mathcal R}(ca)^{k}b$ and $x{\mathcal L}((ca)^{k}c)^{*}$, and consequently $x\in (ca)^{k}bS$. Since $x{\mathcal L}((ca)^{k}c)^{*}$ and $cax(ca)^{k}c=(ca)^{k}c$, there exists some $t\in R$ such that $x=t((ca)^{k}c)^{*}=t(cax(ca)^{k}c)^*=t((ca)^{k}c)^{*}(xac)^*=x(cax)^{*}$, and hence $cax=cax(cax)^{*}=(cax)^{*}$, so that $x=x(cax)^{*}=xcax$. Again, from $x{\mathcal R}(ca)^{k}b$ and $x=xcax$, it follows that $(ca)^{k}b=xs=xcaxs=xca(ca)^{k}b=x(ca)^{k+1}b$ for some $s\in S$.

(iii) $\Rightarrow$ (iv) Given $x(ca)^{k+1}b=(ca)^{k}b$, hence $(ca)^{k}b\leq_{\mathcal R} x$, then $(ca)^{k}b \leq_{\mathcal L^{\rm o}} x$ by Proposition \ref{3.2}. It follows from $x\in (ca)^{k}bS$ and $x(ca)^{k+1}b=(ca)^{k}b$ that $x=(ca)^kbt=x(ca)^{k+1}bt=xcax=x(cax)^*=xx^*(ca)^*=x((ca)^kbt)^*(ca)^*=x((ca)^kcabt)^*=x(abt)^*((ca)^kc)^*$, i.e., $x\leq_{\mathcal L}((ca)^{k}c)^{*}$. Clearly, $x \leq_{\mathcal R^{\rm o}}((ca)^{k}c)^{*}$.

(iv) $\Rightarrow$ (v) Note that $((ca)^{k}c)^{*}=((ca)^{k}c)^{*}(cax)^{*}$. Then $x=x(cax)^{*}$ since $x \leq_{\mathcal R^{\rm o}}((ca)^{k}c)^{*}$, so that $cax=cax(cax)^{*}=(cax)^{*}$. This in turn gives $x=xcax=(xca)x$, which together with $(ca)^{k}b \leq_{\mathcal L^{\rm o}} x$ give $(ca)^{k}b=x(ca)^{k+1}b$,  and $(ca)^{k}b\leq_{\mathcal R} x \leq_{\mathcal R} (ca)^{k}b$, i.e., $x {\mathcal R}(ca)^{k}b$. Analogously, Green's preorder $x \leq_{\mathcal L}((ca)^{k}c)^{*}$ follows from $xcax=x$ and $x {\mathcal R}(ca)^{k}b$.

(v) $\Rightarrow$ (vi) Notice that $x \leq_{\mathcal L}((ca)^{k}c)^{*}$ yields $x\leq_{\mathcal R^{\rm o}}((ca)^{k}c)^{*}$. It is sufficient to prove $((ca)^{k}c)^{*} \leq_{\mathcal R^{\rm o}}x$. An argument similar to (i) $\Rightarrow$ (ii) gives $cax=(cax)^{*}$, and so that $((ca)^{k}c)^{*}=((ca)^{k}c)^{*}cax$. Say with $xs=xt$ for any $s,t\in S$, then $((ca)^{k}c)^{*}s=((ca)^{k}c)^{*}caxs=((ca)^{k}c)^{*}caxt=((ca)^{k}c)^{*}t$, hence, $((ca)^{k}c)^{*} \leq_{\mathcal R^{\rm o}}x$, as required.

(vi) $\Rightarrow$ (i) We need only to show $x \mathcal{L} ((ca)^{k}c)^{*}$. Since $x{\mathcal R^{\rm o}}((ca)^{k}c)^{*}$, by Proposition \ref{3.2}, we shall prove that $x$ and $((ca)^{k}c)^{*}$ are both regular. From $cax(ca)^{k}c=(ca)^{k}c$ and $x{\mathcal R} (ca)^{k}b$, we have $(ca)^{k}c\in ca(ca)^{k}bS(ca)^{k}c\subseteq (ca)^kcS (ca)^kc$, namely $(ca)^kc$ is regular and so is $((ca)^{k}c)^{*}$. Notice that $cax(ca)^{k}c=(ca)^{k}c$ gives $((ca)^{k}c)^{*}(cax)^{*}=((ca)^{k}c)^{*}$. Then $x=x(cax)^{*}$ since $x{\mathcal R^{\rm o}}((ca)^{k}c)^{*}$, so $cax=(cax)^{*}$ and $x=x(cax)^{*}=xcax$, so $x$ is regular, as required.
\hfill$\Box$
\end{proof}

Clearly, any $x\in S$ satisfying the conditions (ii)-(vi) in Theorem \ref{3.3} is exactly the $(b,c)$-core-EP inverse of $a$, it is $\{1,4\}$-invertible as well, and is definitely regular. We hence conclude the fact that if $a$ is $(b,c)$-core-EP invertible with $k\in {\rm I}_{(b,c)}(a)$ then $(ca)^kb$ and $(ca)^kc$ are both regular in terms of the following statement: For any $y,z\in S$ and $y\mathcal{R} z$, then $y$ is regular iff $z$ is regular.

With the help of Theorem \ref{3.3}, we deduce the following crucial uniqueness of the $(b,c)$-core-EP inverse of $a$.

\begin{theorem}\label{3.5}
Let $a,b,c\in S$. Then $a$ has at most one $(b,c)$-core-EP inverse in $S$.
\end{theorem}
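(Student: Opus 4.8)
The plan is to reduce the three Green-relation conditions of Definition \ref{3.1} to the equational form supplied by Theorem \ref{3.3}(iii), and then to play the $\mathcal R$-condition (governed by $b$) off against the $\mathcal L$-condition (governed by $c$). Fix a witnessing index $k$ and write $B=(ca)^{k}b$, $C=(ca)^{k}c$. By Theorem \ref{3.3}(iii) and the computations in its proof, any $(b,c)$-core-EP inverse $x$ attached to $k$ satisfies $x\in BS$, $caxC=C$, $x(ca)^{k+1}b=B$, $cax=(cax)^{*}$ and $x=xcax$; in particular $p:=cax$ is a Hermitian idempotent and $x=xp$. The first thing I would record is $pS=(ca)^{k+1}bS$: the inclusion $pS\subseteq(ca)^{k+1}bS$ is clear from $x\in BS$, while $p\,(ca)^{k+1}b=ca\big(x(ca)^{k+1}b\big)=caB=(ca)^{k+1}b$ gives $(ca)^{k+1}b\in pS$, hence the reverse inclusion. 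Likewise $xS=BS=(ca)^{k}bS$.

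The key lemma I would isolate is that a Hermitian idempotent is determined by its principal right ideal: if $p,p'$ are Hermitian idempotents with $pS=p'S$, then $p=p'$. Indeed, writing $p=p'u$ and $p'=pv$ gives $p'p=p$ and $pp'=p'$; taking adjoints of $p'p=p$ and using $p^{*}=p$, $p'^{*}=p'$ yields $pp'=p$, whence $p=p'$.

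With this in hand, uniqueness for two inverses $x,y$ sharing the \emph{same} index $k$ is immediate. Both $p=cax$ and $q=cay$ are Hermitian idempotents with $pS=(ca)^{k+1}bS=qS$, so the lemma gives $p=q$. Since $xS=(ca)^{k}bS=yS$, write $x=ym$ for some $m\in S^{1}$; applying $ca$ gives $cax=(cay)m=qm=pm$, and as $cax=p$ we get $pm=p$. Finally $y=ycay=yq=yp$, so $x=ym=(yp)m=y(pm)=yp=y$.

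The remaining, and genuinely harder, case is that of two inverses $x$ and $y$ whose witnessing indices $k<l$ differ, for then $x\mathcal R(ca)^{k}b$ and $y\mathcal R(ca)^{l}b$ involve a priori different right ideals and the argument above does not apply verbatim. My plan is to reduce to the equal-index case by showing that $x$ is also a $(b,c)$-core-EP inverse at index $l$. The projection equation lifts for free: using the commutation identity $(ca)^{m}c=c(ac)^{m}$ one checks $cax(ca)^{m}c=(ca)^{m}c$ for every $m\ge k$, so $cax(ca)^{l}c=(ca)^{l}c$. The same identity gives the descending chain $(ca)^{l}cS\subseteq(ca)^{k}cS$, from which $y\le_{\mathcal L}x$ and, after passing through the projections, $q=cay\le p=cax$ (that is $pq=qp=q$) and $y=yp$. \textbf{The hard part} will be to upgrade these one-sided comparisons to genuine equalities of the governing right ideals, i.e.\ to prove $(ca)^{k}bS=(ca)^{l}bS$ (equivalently $p=q$): unlike the $c$-side, the $b$-side carries no free commutation identity, so this stabilization must be extracted from the defining relations of \emph{both} solutions together, for instance from the linking relations $(ca)^{k}c\in(ca)^{k+1}bS$ and $(ca)^{l}c\in(ca)^{l+1}bS$ and the regularity of $(ca)^{k}b$ and $(ca)^{k}c$ forced by Theorem \ref{3.3}. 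Once $(ca)^{k}bS=(ca)^{l}bS$ and $(ca)^{k}cS=(ca)^{l}cS$ are in place, $x$ meets all three defining conditions at index $l$, and the equal-index argument then yields $x=y$.
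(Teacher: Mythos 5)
Your equal-index argument is complete and correct, and it is in fact exactly the scope of the paper's own proof: the paper's proof of Theorem \ref{3.5} silently places both inverses at a \emph{common} witness $k$ (it writes $x\mathcal{R}(ca)^{k}b\mathcal{R}y$ and $x\mathcal{L}((ca)^{k}c)^{*}\mathcal{L}y$ with one $k$) and then concludes in two lines via $y=xs=xcaxs=xcay$ and $x=ty=t(ycay)=xcay$. Your route through the lemma that a Hermitian idempotent is determined by its principal right ideal is heavier machinery but equally valid for that case.

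The cross-index case, which you candidly flag as ``the hard part'' and leave as a plan, is a genuine gap --- and moreover the step you hope to extract would fail. The stabilization $(ca)^{k}bS=(ca)^{l}bS$ (equivalently $p=q$) is \emph{not} a consequence of the two solutions' defining relations. The paper's own Example \ref{3.6} shows this: with $S=M_3(\mathbb{C})$, $a=I_3$, $b=E_{33}$, $c=E_{12}+E_{23}$, the element $x=E_{21}$ is a $(b,c)$-core-EP inverse with witness $k=1$, yet for every $k\ge 3$ one has $(ca)^{k}b=c^{k}b=0$ and $(ca)^{k}c=c^{k+1}=0$, so $x'=0$ satisfies all three conditions of Definition \ref{3.1} at witness $k=3$ (indeed $0\,\mathcal{R}\,0$, $0\,\mathcal{L}\,0$, and $ca\cdot 0\cdot(ca)^{3}c=0=(ca)^{3}c$). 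Thus two \emph{distinct} elements, $E_{21}$ and $0$, are both $(b,c)$-core-EP inverses of $a$ under the literal definition, and the governing ideals genuinely differ: $cbS=E_{23}S\neq\{0\}=(ca)^{3}bS$. So no combination of the linking relations $(ca)^{k}c\in(ca)^{k+1}bS$, $(ca)^{l}c\in(ca)^{l+1}bS$ and regularity can deliver $p=q$; your partial deductions ($q\le p$, $y=yp$, the lifting $cax(ca)^{m}c=(ca)^{m}c$ for $m\ge k$) are all correct but are as far as one can go. The honest conclusion is that uniqueness holds \emph{per fixed witnessing index} --- which is what the paper actually proves and what your completed portion establishes --- while the literal cross-index statement is false as written (this also shows the paper's claim in Example \ref{3.6} that no $m\ge 2$ lies in ${\rm I}_{(b,c)}(a)$ fails for $m\ge 3$). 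Your proposal therefore cannot be completed along the announced lines; its finished part should be presented as the whole proof, with the index fixed.
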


\begin{proof}
Suppose $x,y\in S$ are two $(b,c)$-core-EP inverses of $a$. Then one has $xcax=x$, $x\mathcal{R}(ca)^{k}b\mathcal{R}y$, $x\mathcal{L}((ca)^{k}c)^{*}\mathcal{L}y$ and $ycay=y$ for some $k\in \mathbb{N}$. Once given $xcax=x$ and $x\mathcal{R}(ca)^{k}b\mathcal{R}y$, then there is some $s\in S$ such that $y=xs=xcaxs=xcay$. Similarly, $ycay=y$ and $x\mathcal{L}((ca)^{k}c)^{*}\mathcal{L}y$ imply $xcay=x$. Therefore, $x=y$.
\hfill$\Box$
\end{proof}

By Theorem \ref{3.5}, the $(b,c)$-core-EP inverse of $a$ is unique when it exists. Let $a_{(b,c)}^{\tiny{\textcircled{D}}}$ denote the $(b,c)$-core-EP inverse of $a$. The symbol $S_{(b,c)}^{\tiny{\textcircled{D}}}$ stands for the set of all $(b,c)$-core-EP invertible elements in $S$.

As in \cite{Anderson1974}, let the symbol ${\rm card}(A)$ stand for the cardinality of the set $A$. When $A=\{1,2,3,4\}$, then ${\rm card}(A)=4$. If $A=\mathbb{N}$ or $A=\mathbb{N^*}$, then ${\rm card}(A)=\infty$. By ${\rm I}_w(a)$ we denote the set of all $k\in \mathbb{N}$ satisfying the system of equations $x(aw)^{k+1}a=(aw)^ka$, $awx^2=x$ and $awx=(awx)^*$. One knows that if $k\in \mathbb{N}$ is in ${\rm I}_w(a)$ then so is $k+n$ for any $n\in \mathbb{N^*}$. Indeed, $x(aw)^{k+1}a=(aw)^ka$ implies $x(aw)^{k+n+1}a=(aw)^{k+n}a$. It follows that ${\rm card}({\rm I}_w(a))=\infty$.

We herein remind the reader that there are only two cases on ${\rm card}({\rm I}_{(b,c)}(a))$, where the first case is ${\rm card}({\rm I}_{(b,c)}(a))=1$, as Example \ref{3.6} shows, the second case is that if ${\rm card}({\rm I}_{(b,c)}(a)) > 1$ then ${\rm card}({\rm I}_{(b,c)}(a))=\infty$ (indeed, this set is countably infinite). We next illustrate the second case.

\begin{theorem} \label{add}
For any $a,b,c\in S$, let $a$ be $(b,c)$-core-EP invertible with ${\rm i}_{(b,c)}(a)=k$. If ${\rm card}({\rm I}_{(b,c)}(a)) >1$, then for any integer $m>k$, $m$ is in the set ${\rm I}_{(b,c)}(a)$.
\end{theorem}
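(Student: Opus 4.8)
The plan is to work from the more tractable characterisation in Theorem \ref{3.3}(iii): an integer $m$ lies in ${\rm I}_{(b,c)}(a)$ exactly when there is some $x\in(ca)^{m}bS$ with $cax(ca)^{m}c=(ca)^{m}c$, $x(ca)^{m+1}b=(ca)^{m}b$ and $(cax)^{*}=cax$. First I would record the structural data attached to such an $x$: the element $e:=cax$ is a self-adjoint idempotent, $e(ca)^{m}c=(ca)^{m}c$, and (arguing as in the proof of Theorem \ref{3.3}) $eS=(ca)^{m+1}bS=(ca)^{m}cS$, so that $(ca)^{m}c$ is $\{1,3\}$-invertible and one has the relations $(ca)^{m}c\,{\mathcal R}\,(ca)^{m+1}b$ and $(ca)^{m}b\,{\mathcal L}\,(ca)^{m+1}b$. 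Conversely, from a self-adjoint idempotent with these two Green relations one can rebuild a witness, so membership in ${\rm I}_{(b,c)}(a)$ is equivalent to the existence of this projection together with the two relations.

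The engine of the argument is a one-sided shift identity for the $c$-part. Since $(ca)^{m+1}c=\big((ca)^{m}c\big)(ac)$, right-multiplying $cax(ca)^{m}c=(ca)^{m}c$ by $ac$ shows that the \emph{same} $x$ satisfies $cax(ca)^{m+1}c=(ca)^{m+1}c$. Hence, starting from $k={\rm i}_{(b,c)}(a)\in{\rm I}_{(b,c)}(a)$ with witness $x_{k}$, the single projection $E:=cax_{k}$ satisfies $E(ca)^{m}c=(ca)^{m}c$ for every $m\ge k$, so the $c$-part of the definition propagates upward for free. Exhibiting $c$ as genuinely $a$-core-EP invertible, namely producing the left-multiplying witness $X$ with $caX^{2}=X$, $X(ca)^{m+1}c=(ca)^{m}c$ and $caX=(caX)^{*}$, is by contrast where the second index is needed; once this is done, the same shift identity renders the $a$-core-EP index set ${\rm I}_{a}(c)$ of $c$ upward closed (exactly as in the remark that ${\rm card}({\rm I}_{w}(a))=\infty$), forcing every sufficiently large $m$ into it.

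What does \emph{not} shift is the $b$-part: there is no right identity turning $(ca)^{m}b$ into $(ca)^{m+1}b$, which is precisely why ${\rm card}({\rm I}_{(b,c)}(a))$ can equal $1$ for a single index. This is where the hypothesis ${\rm card}({\rm I}_{(b,c)}(a))>1$, i.e. the existence of a second index $n>k$, becomes indispensable. With witnesses $x_{k}$ and $x_{n}$ in hand I would compare the two self-adjoint idempotents $E=cax_{k}$ and $E':=cax_{n}$, whose ranges are $(ca)^{k}cS$ and $(ca)^{n}cS$; since $E(ca)^{n}c=(ca)^{n}c$ gives $E'S=(ca)^{n}cS\subseteq (ca)^{k}cS=ES$, one gets $EE'=E'E=E'$. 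Using this, together with the relations from the two witnesses, I would show that for every integer $m>k$ the projection onto $(ca)^{m}cS$ exists, lies in $(ca)^{m+1}bS$, and carries both $(ca)^{m}c\,{\mathcal R}\,(ca)^{m+1}b$ and $(ca)^{m}b\,{\mathcal L}\,(ca)^{m+1}b$. Assembling $x_{m}\in(ca)^{m}bS$ from this projection and invoking the uniqueness in Theorem \ref{3.5} then yields $m\in{\rm I}_{(b,c)}(a)$.

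I expect the main obstacle to be precisely this last propagation of the $b$-part: transporting the $\{1,3\}$-invertibility of $(ca)^{m}c$ and the two Green relations from the two known levels $k$ and $n$ to \emph{all} intermediate and higher levels $m>k$. The difficulty is that $(ca)^{m}c=(ca)^{m-k}\,(ca)^{k}c$ admits only left-multiplication by powers of $ca$, so the usual ``two equal consecutive principal ideals force stabilisation'' cascade is unavailable; the coherence between the $b$-generated and $c$-generated ideals must instead be extracted from the two witnesses simultaneously, presumably by showing that $E$ and $E'$ generate a single $a$-core-EP projection for $c$ and reading off the required $b$-side identities from it.
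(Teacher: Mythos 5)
Your high-level architecture coincides with the paper's (show that $c$ is $a$-core-EP invertible, exploit upward closure of ${\rm I}_a(c)$, transport the $b$-side back), but the central step --- extracting from the second index the stabilisation that drives everything --- is exactly the step you leave open, and you say so yourself. Moreover, the mechanism you propose for it is degenerate: by the uniqueness theorem (Theorem \ref{3.5}) the witnesses $x_k$ and $x_n$ for the two indices are the \emph{same} element, so your two projections satisfy $E=E'$ and the relation $EE'=E'E=E'$ carries no information. The actual dividend of uniqueness is different: the single element $x$ simultaneously satisfies the index-$k$ equation $cax(ca)^{k}c=(ca)^{k}c$ and the index-$t$ membership $x\in (ca)^{t}bS$ (where $t>k$ is the second index), whence $(ca)^{k}c=cax(ca)^{k}c\in (ca)^{t+1}bS\subseteq (ca)^{k+1}cS$, using $t\geq k+1$ to write $(ca)^{t+1}b=(ca)^{k+1}c\,a(ca)^{t-k-1}b$. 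Together with the trivial $(ca)^{k+1}c=(ca)^{k}c(ac)$ this yields the one crucial link $(ca)^{k}c\,\mathcal{R}\,(ca)^{k+1}c$. Your proposal never produces this link, and your speculative route via ``$E$ and $E'$ generate a single $a$-core-EP projection'' cannot, since $E=E'$.

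Your closing diagnosis --- that the ``two equal consecutive principal ideals force stabilisation'' cascade is unavailable because $(ca)^{m}c$ arises only by left multiplication --- is also mistaken, and this is where the proof closes. The relation $\mathcal{R}$ is a left congruence: left-multiplying the link above by $ca$ gives $(ca)^{k+1}c\,\mathcal{R}\,(ca)^{k+2}c$, and inductively $(ca)^{k}c\,\mathcal{R}\,(ca)^{m}c$ for all $m\geq k$. This cascade is precisely the engine of the paper's proof. The $b$-part then needs no new construction either: the minimal-index witness gives $(ca)^{k}c\,\mathcal{R}\,(ca)^{k+1}b$ (from $cax(ca)^kc=(ca)^kc$ with $x\in(ca)^kbS$, and $x(ca)^{k+1}b=(ca)^kb$), hence $(ca)^{m-1}c\,\mathcal{R}\,(ca)^{m}b$ for all $m>k$ by left multiplication, and the chain $(ca)^{k}c\,\mathcal{R}\,(ca)^{t-1}c\,\mathcal{R}\,(ca)^{t}b\,\mathcal{R}\,x\,\mathcal{R}\,(ca)^{k}b$ shows that $c$ is $a$-core-EP invertible with $k\in {\rm I}_a(c)$. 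Upward closure of ${\rm I}_a(c)$ (your own shift observation for the $c$-part) gives $cax(ca)^{k+n}c=(ca)^{k+n}c$, $x\,\mathcal{R}\,(ca)^{k+n}c$ and $x\,\mathcal{L}\,((ca)^{k+n}c)^{*}$, and left-multiplying $(ca)^{k}c\,\mathcal{R}\,(ca)^{k}b$ by $(ca)^{n}$ converts the $c$-side relation into $x\,\mathcal{R}\,(ca)^{k+n}b$; the same $x$ thus witnesses every level $m>k$, and no fresh projection or $\{1,3\}$-inverse has to be assembled. In short, the two ideas missing from your plan are uniqueness of the witness and the left-congruence property of $\mathcal{R}$; without them your argument stalls exactly where you predicted it would.
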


\begin{proof}
Since $k={\rm i}_{(b,c)}(a)$, the $(b,c)$-core-EP index of $a$, there is some $x\in (ca)^{k}bS$ such that $cax(ca)^{k}c=(ca)^{k}c$ and $x(ca)^{k+1}b=(ca)^{k}b$. Thereby we have $(ca)^{k}c\leq_\mathcal{R} (ca)^{k+1}b\leq_\mathcal{R} (ca)^{k}c$, i.e., $(ca)^{k}c\mathcal{R} (ca)^{k+1}b$. This gives $(ca)^{m-1}c\mathcal{R}(ca)^{m}b$ for any integer $m> k$.

Suppose $t\in I_{(b,c)}(a)$ and $t>k$. Then $x\mathcal{R}(ca)^{t}b$, $x\mathcal{L}((ca)^{t}c)^{*}$, $cax(ca)^{t}c=(ca)^{t}c$ and $x(ca)^{t+1}b=(ca)^{t}b$. From $(ca)^kc=cax(ca)^kc$ and $x\in (ca)^tbS$ it follows that $(ca)^kc\leq_\mathcal{R} (ca)^{t+1}b \leq_\mathcal{R}(ca)^{k+1}c \leq_\mathcal{R} (ca)^kc$, for that would imply $(ca)^{k}c \mathcal{R}(ca)^{k+1}c$. By induction it follows that $(ca)^{k}c \mathcal{R}(ca)^{k+1}c\mathcal{R} (ca)^{k+2}c\mathcal{R}\cdots$, hence $(ca)^{k}c\mathcal{R}(ca)^{t-1}c\mathcal{R}(ca)^{t}b \mathcal{R} x\mathcal{R} (ca)^{k}b$, which together with $cax(ca)^kc=(ca)^kc$ and $x\mathcal{L}((ca)^kc)^*$ implies that $c$ is $a$-core-EP invertible with $k\in {\rm I}_a(c)$. So we conclude $k+n\in {\rm I}_a(c)$ for any $n\in \mathbb{N^*}$, hence $cax(ca)^{k+n}c=(ca)^{k+n}c$, $x\mathcal{R}(ca)^{k+n}c$ and $x\mathcal{L}((ca)^{k+n}c)^*$. Since $(ca)^{k}c\mathcal{R}(ca)^{k}b$, for that we would imply $(ca)^{k+n}c\mathcal{R}(ca)^{k+n}b$ so that $x\mathcal{R}(ca)^{k+n}b$. Therefore, $a$ is $(b,c)$-core-EP invertible with $k+n\in {\rm I}_{(b,c)}(a)$, as required.
\hfill$\Box$
\end{proof}

For any $a,b,c,w\in S$, the $(b,c)$-core-EP invertible elements may not be $w$-core-EP invertible. As previously stated, the $(b,c)$-core-EP inverse of $a$ reduces to the $a$-core-EP inverse of $c$ provided that ${\rm card}({\rm I}_{(b,c)}(a))>1$. However, if ${\rm card}({\rm I}_{(b,c)}(a))=1$, and even ${\rm I}_{(b,c)}(a)=\{0\}$, the $(b,c)$-core-EP inverse can not be simplified to the special $w$-core-EP inverse in general.

We should like to point out that in the special case where ${\rm card}({\rm I}_{(b,c)}(a)) >1$, Theorem \ref{add} leads to the following implication ``$a$ is $(b,c)$-core-EP invertible implies that $c$ is $a$-core-EP invertible.''

\begin{corollary} \label{new cor} For any $a,b,c\in S$, let $a$ be $(b,c)$-core-EP invertible and ${\rm card}({\rm I}_{(b,c)}(a)) >1$. Then $c$ is $a$-core-EP invertible. Moreover, ${\rm i}_a(c)\leq{\rm i}_{(b,c)}(a)$.
\end{corollary}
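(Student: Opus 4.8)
The plan is to read the conclusion off directly from the proof of Theorem \ref{add}, of which this corollary is essentially an intermediate byproduct. First I would put $k:={\rm i}_{(b,c)}(a)$, the least element of ${\rm I}_{(b,c)}(a)$. The hypothesis ${\rm card}({\rm I}_{(b,c)}(a))>1$ together with the minimality of $k$ forces the existence of some $t\in{\rm I}_{(b,c)}(a)$ with $t>k$; this pair $k<t$ is exactly the configuration in which the argument of Theorem \ref{add} runs.

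Next I would invoke that argument. Let $x$ be the $(b,c)$-core-EP inverse of $a$, so that $cax(ca)^kc=(ca)^kc$ and $x\mathcal{L}((ca)^kc)^*$ hold, and so that the $\mathcal{R}$-chain produced in Theorem \ref{add}, namely $(ca)^kc\,\mathcal{R}\,(ca)^{k+1}c\,\mathcal{R}\,\cdots\,\mathcal{R}\,(ca)^{t-1}c\,\mathcal{R}\,(ca)^tb\,\mathcal{R}\,x$, collapses all the way down to $x\,\mathcal{R}\,(ca)^kc$. The three facts $cax(ca)^kc=(ca)^kc$, $x\,\mathcal{R}\,(ca)^kc$ and $x\,\mathcal{L}\,((ca)^kc)^*$ are precisely the Green's-relation description (Definition \ref{3.1} with the parameter $b$ replaced by $c$) of the $a$-core-EP invertibility of $c$; indeed ``$c$ is $a$-core-EP invertible'' coincides with ``$a$ is $(c,c)$-core-EP invertible.'' Hence $c$ is $a$-core-EP invertible and $k\in{\rm I}_a(c)$.

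The index inequality is then immediate from minimality: since ${\rm i}_a(c)$ is by definition the smallest element of ${\rm I}_a(c)$ and we have just exhibited $k\in{\rm I}_a(c)$, we conclude ${\rm i}_a(c)\leq k={\rm i}_{(b,c)}(a)$. I expect no genuine obstacle, as Theorem \ref{add} already supplies the whole engine; the only point requiring care is verifying that the $\mathcal{R}$-chain pulls $x$ back onto $(ca)^kc$ rather than stopping at $(ca)^tb$, for it is this collapse that upgrades the mere $(b,c)$-core-EP data into the stronger $a$-core-EP data for $c$ at the smaller exponent $k$.
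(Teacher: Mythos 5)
Your proposal is correct and follows essentially the same route as the paper: the corollary is stated there precisely as a byproduct of the proof of Theorem \ref{add}, whose argument produces the chain $(ca)^{k}c\,\mathcal{R}\,(ca)^{k+1}c\,\mathcal{R}\cdots\mathcal{R}\,(ca)^{t-1}c\,\mathcal{R}\,(ca)^{t}b\,\mathcal{R}\,x$ and then combines $x\,\mathcal{R}\,(ca)^{k}c$ with $cax(ca)^{k}c=(ca)^{k}c$ and $x\,\mathcal{L}\,((ca)^{k}c)^{*}$ to conclude that $c$ is $a$-core-EP invertible with $k={\rm i}_{(b,c)}(a)\in{\rm I}_a(c)$. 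Your final step, deducing ${\rm i}_a(c)\leq{\rm i}_{(b,c)}(a)$ by minimality, is exactly the paper's implicit conclusion.
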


Let us now proceed with our construction to illustrate Corollary \ref{new cor}.

\begin{example}
{\rm Let $S= \mathbb{Z}_{8}$ be the semigroup of all integers modulo 8  and let involution be the identity map. Suppose $a=b=1$, $c=2 \in S$. A quick computation demonstrates that $x=0$ is the $(b,c)$-core-EP inverse of $a$ and ${\rm I}_{(b,c)}(a)=\{3,4,5,\cdots\}$. Moreover, ${\rm i}_{(b,c)}(a)=3$. Similarly, it follows that $c$ is $a$-core-EP invertible and ${\rm I}_a(c)=\{2,3,4,\cdots\}$. So, ${\rm i}_{a}(c)=2\leq 3={\rm i}_{(b,c)}(a)$. By hindsight, this example also indicates that ${\rm I}_{(b,c)}(a)$ and ${\rm I}_{a}(c)$ are equivalent since there is a bijection between the two sets.}
\end{example}

In what follows, we assume that ${\rm card}({\rm I}_{(b,c)}(a))=1$ and $0\notin {\rm I}_{(b,c)}(a)$, unless otherwise specified.

Given any $a,x\in S$, consider the following four conditions (1) $axa=a$, (2) $xax=x$, (3) $(ax)^{*}=ax$, (4) $(xa)^{*}=xa$. If $a,x\in S$ satisfy the equations $\{i_1,...,i_n\}\subseteq \{1,2,3,4\}$, then $x$ is called a $\{i_1,...,i_n\}$-inverse of $a$ and is denoted by $a^{(i_1,...,i_n)}$. By the symbol $a\{i_1,...,i_n\}$ we denote the set of all $\{i_1,...,i_n\}$-inverses of $a$. As usual, $S^{(i_1,...,i_n)}$ stands for the set of all $\{i_1,...,i_n\}$-invertible elements in $S$. The Moore--Penrose inverse of $a$ \cite{Penrose1955} is exactly its $\{1,2,3,4\}$-inverse, and is denoted by $a^{\dag}$. We denote by $S^{\dag}$ the set of all Moore--Penrose invertible elements in $S$.

As proved in \cite[Theorem 2.7]{Zhu2024}, $a$ is $(b,c)$-core invertible iff $a$ is $(b,c)$-invertible and $c$ is $\{1,3\}$-invertible, in which case, $a_{(b,c)}^{\tiny{\textcircled{\#}}}=a^{(b,c)}c^{(1,3)}$.

So, one might except, whether the $(b,c)$-core-EP invertibility of $a$ is equal to its $((ca)^kb,(ca)^kc)$-invertibility and $\{1,3\}$-invertibility of certain element, where $k\in I_{(b,c)}(a)$. This may fail without additional condition, see the following example.

\begin{example}\label{3.6}
{\rm Let $S=M_3(\mathbb{C})$ be the ring of all $3\times 3$ complex matrices and let the involution $*$ be the conjugate transpose. Assume $a=I_{3}$,
$b=
\begin{bmatrix}
0 & 0 & 0\\
0 & 0 & 0 \\
0 & 0 & 1 \\
\end{bmatrix}$,
$c=
\begin{bmatrix}
0 & 1 & 0\\
0 & 0 & 1 \\
0 & 0 & 0 \\
\end{bmatrix}
$ and $x=
\begin{bmatrix}
0 & 0 & 0\\
1 & 0 & 0 \\
0 & 0 & 0 \\
\end{bmatrix}$. A direct check shows that $\{2,3,4,\cdots\} \subsetneqq {\rm I}_{(b,c)}(a)$, i.e., $ 2 \leq m \notin {\rm I}_{(b,c)}(a)$. Also, notice that $x\in cabS$, $caxcac=cac$, $x(ca)^{2}b=cab$ and $(cax)^{*}=cax$. Thus $a$ is $(b,c)$-core-EP invertible with $I_{(b,c)}(a)=\{1\}$. However, $a$ is not $(cab,cac)$-invertible since $cac\notin (ca)^{3}bS$.}
\end{example}

Example \ref{3.6} indicates that if $a$ is $(b,c)$-core-EP invertible with $k\in {\rm I}_{(b,c)}(a)$ then it may not be $((ca)^{k}b, (ca)^{k}c)$-invertible in general (We assert that $a$ is not $(b,c)$-core invertible as well, in fact, $x\notin bS$). It is of interest to ask whether the $(b,c)$-core-EP invertibility of $a$ with $k\in {\rm I}_{(b,c)}(a)$ implies the $((ca)^{k}b, (ca)^{k}c)$-invertibility and $\{1,3\}$-invertibility of certain elements. The answer is positive. Surprisingly, the converse statement also holds as Theorem \ref{3.8} below shows.

Recall two auxiliary lemmas about important characterizations of $(b,c)$-inverses and $\{1,3\}$-inverses, which will be used in our proof of Theorem \ref{3.8}.

\begin{lemma} \label{bc inverse} {\rm \cite[Theorem 2.2]{Drazin2012}} Let $a,b,c\in S$. Then $a$ is $(b,c)$-invertible iff $b\in Scab$ and $c\in cabS$. In particular, if $b=vcab$ and $c=cabw$ for some $v,w\in S$, then $a^{(b,c)}=bw=vc$.
\end{lemma}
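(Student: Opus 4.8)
The statement to prove is the characterization of Drazin's $(b,c)$-inverse: $a$ is $(b,c)$-invertible iff $b\in Scab$ and $c\in cabS$, together with the explicit formula $a^{(b,c)}=bw=vc$ when $b=vcab$ and $c=cabw$. Since this is cited as \cite[Theorem 2.2]{Drazin2012}, the plan is to reconstruct Drazin's argument from first principles using only the definition of the $(b,c)$-inverse recalled in the Preliminaries (namely $y\in bSy\cap ySc$, $yab=b$ and $cay=c$).

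\emph{Necessity.} First I would assume $a$ is $(b,c)$-invertible with inverse $y$, so that $yab=b$, $cay=c$ and $y\in bSy\cap ySc$. Write $y=bsy$ and $y=ytc$ for suitable $s,t\in S$. From $y=bsy$ and $yab=b$ I get $b=yab=bsyab=bs(yab)=bsb$-type relations; more directly, substituting gives $b=yab$ and then $b=yab=(ytc)ab=yt(cab)\in Scab$, using $cay=c$ to rewrite. Symmetrically, $c=cay=ca(bsy)=(cab)sy\in cabS$. So the membership conditions $b\in Scab$ and $c\in cabS$ follow by these two substitutions; the only care needed is tracking which defining identity ($yab=b$ versus $cay=c$) to insert at each step.

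\emph{Sufficiency and the formula.} Conversely, suppose $b=vcab$ and $c=cabw$. The plan is to verify directly that $y:=bw$ (and likewise $y:=vc$) satisfies all four requirements of a $(b,c)$-inverse, and that the two candidates coincide. For $y=bw$: I would check $cay=cabw=c$ immediately, and $yab=bwab$—here I need to show $bwab=b$, which should follow by combining $b=vcab$ with $c=cabw$ to produce an idempotent-like identity $cabwab=cab$, then left-multiplying appropriately; this is the step requiring the most careful bookkeeping. The conditions $y\in bSy$ and $y\in ySc$ are then obtained by writing $bw=b(wab)w$-style factorizations and using $c=cabw$ to express $y$ through $c$ on the right. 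Finally, to see $bw=vc$ I would compute $vc=v(cabw)=(vcab)w=bw$, which is a clean one-line verification once both hypotheses are in hand.

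\emph{Main obstacle.} The genuine content is establishing $bwab=b$ (equivalently $yab=b$) in the sufficiency direction, since the membership conditions and the formula reduce to substitutions. The natural route is to show the element $cab$ behaves like a unit on the relevant one-sided ideals—i.e., that $wab$ acts as a right identity modulo the factorizations $b=vcab$, $c=cabw$—and then uniqueness of the $(b,c)$-inverse (already recorded in the Preliminaries) guarantees $a^{(b,c)}=bw=vc$. I would close by remarking that uniqueness makes the two formulas for $a^{(b,c)}$ automatically consistent, so no separate argument identifying them beyond $vc=bw$ is needed.
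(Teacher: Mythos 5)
Your proposal is correct: the paper states this lemma without proof (it is imported as \cite[Theorem 2.2]{Drazin2012}), and your reconstruction --- getting $b=yab=(ytc)ab\in Scab$ and $c=cay=ca(bsy)\in cabS$ from the defining identities, then verifying directly that $y:=bw=vc$ satisfies $cay=cabw=c$, $yab=(vcab)wab=v(cabw)ab=vcab=b$, $y=b(wa)y\in bSy$ and $y=y(av)c\in ySc$, with uniqueness giving $a^{(b,c)}=bw=vc$ --- is exactly the standard direct-verification argument of the cited source, and every step you flag as delicate does go through (e.g.\ $cabwab=cab$ left-multiplied by $v$ yields $bwab=b$). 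The only cosmetic slip is your remark that $cay=c$ is used in deriving $b\in Scab$ (only $y\in ySc$ and $yab=b$ are needed there), which does not affect correctness.
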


\begin{lemma}\label{3.7} {\rm \cite[Lemma 2.2]{Zhu2015}}
Let $a\in S$. Then

\emph{(i)} $a\in S^{(1,3)}$ iff $a\in Sa^{*}a$. In particular, if $a=xa^{*}a$ for some $x\in S$, then $x^{*}$ is a $\{1,3\}$-inverse of $a$.

\emph{(ii)} $a\in S^{(1,4)}$ iff $a\in aa^{*}S$. In particular, if $a=aa^{*}y$ for some $y\in S$, then $y^{*}$ is a $\{1,4\}$-inverse of $a$.
\end{lemma}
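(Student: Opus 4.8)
The plan is to treat this as a pair of dual characterizations and prove part (i) directly in both directions, then obtain part (ii) by applying part (i) to $a^{*}$. Throughout I would work only with the four Penrose conditions (1)--(4) recalled just before the lemma, together with the involution identities $(a^{*})^{*}=a$ and $(uv)^{*}=v^{*}u^{*}$. The whole argument is elementary substitution; the only point requiring a small idea is one adjoint-and-multiply manipulation in the backward direction of (i), which simultaneously delivers both the Hermitian condition and the inner-inverse condition.

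For the forward direction of (i), I would start from a $\{1,3\}$-inverse $g=a^{(1,3)}$, so that $aga=a$ and $(ag)^{*}=ag$. Then $a=aga=(ag)a=(ag)^{*}a=g^{*}a^{*}a$, which exhibits $a\in Sa^{*}a$ with witness $x=g^{*}$. This is immediate once the symmetry of $ag$ is substituted into $aga=a$.

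The substantive half is the backward direction, where I am given $a=xa^{*}a$ and must verify that $x^{*}$ is a $\{1,3\}$-inverse of $a$, i.e.\ that $ax^{*}a=a$ and $(ax^{*})^{*}=ax^{*}$. The key step is to take the adjoint of the hypothesis to get $a^{*}=a^{*}ax^{*}$, and then left-multiply this identity by $x$, obtaining $xa^{*}=xa^{*}ax^{*}=(xa^{*}a)x^{*}=ax^{*}$, where the last equality uses the hypothesis $xa^{*}a=a$. This single identity $ax^{*}=xa^{*}$ is the crux: it gives $(ax^{*})^{*}=(x^{*})^{*}a^{*}=xa^{*}=ax^{*}$, so condition (3) holds; and it gives $ax^{*}a=xa^{*}a=a$, so condition (1) holds. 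Hence $x^{*}\in a\{1,3\}$ and in particular $a\in S^{(1,3)}$. I expect this derivation of $ax^{*}=xa^{*}$ to be the only non-mechanical moment of the proof.

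Finally, part (ii) follows by duality and needs no fresh computation. Since taking adjoints interchanges conditions (3) and (4), one has $a\in S^{(1,4)}$ iff $a^{*}\in S^{(1,3)}$, and since $(aa^{*})^{*}=aa^{*}$ one has $a\in aa^{*}S$ iff $a^{*}\in S(a^{*})^{*}a^{*}=Saa^{*}$. Applying part (i) to $a^{*}$ therefore yields the equivalence $a\in S^{(1,4)}\Leftrightarrow a\in aa^{*}S$. For the explicit form, given $a=aa^{*}y$ I would run the dual of the above computation: taking adjoints gives $a^{*}=y^{*}aa^{*}$, and right-multiplying by $y$ yields $a^{*}y=y^{*}aa^{*}y=y^{*}(aa^{*}y)=y^{*}a$, whence $(y^{*}a)^{*}=a^{*}y=y^{*}a$ (condition (4)) and $ay^{*}a=a(y^{*}a)=a(a^{*}y)=(aa^{*})y=a$ (condition (1)), so $y^{*}\in a\{1,4\}$.
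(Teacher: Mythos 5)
Your proof is correct, and since the paper simply quotes this lemma from \cite[Lemma 2.2]{Zhu2015} without reproducing a proof, there is nothing in the text to diverge from: your argument (extract $a=g^{*}a^{*}a$ from $aga=a$ and $(ag)^{*}=ag$; conversely derive $ax^{*}=xa^{*}$ by adjointing $a=xa^{*}a$ and left-multiplying by $x$, which yields conditions (1) and (3) simultaneously; then get (ii) by applying (i) to $a^{*}$) is exactly the standard verification given in the cited source. No gaps; all steps use only the involution identities and hold in an arbitrary $*$-monoid, as required.
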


Given any $a,b,c\in S$, we now show that if $a$ is $(b,c)$-core-EP invertible then $a_{(b,c)}^{\tiny{\textcircled{D}}}$ is $\{1,4\}$-invertible. Suppose we are given $cax(ca)^{k}c=(ca)^{k}c$, $x{\mathcal R}(ca)^{k}b$ and $x{\mathcal L}((ca)^{k}c)^{*}$, where $x=a_{(b,c)}^{\tiny{\textcircled{D}}}$ and $k\in {\rm I}_{(b,c)}(a)$. Then there is some $t\in S$ such that $x^*=(ca)^kct=(cax(ca)^kc)t=cax((ca)^kct)=caxx^*\in Sxx^*$, so that $x\in xx^*S$, i.e., $a_{(b,c)}^{\tiny{\textcircled{D}}}$ is $\{1,4\}$-invertible. Moreover, $ca$ is a $\{1,4\}$-inverse of $a_{(b,c)}^{\tiny{\textcircled{D}}}$. This implication can also be obtained from Theorem \ref{3.3} (ii).

As shown above, if $a$ is $(b,c)$-core-EP invertible with $k\in {\rm I}_{(b,c)}(a)$, then $(ca)^{k}c$ is $\{1,3\}$-invertible. It is convenient to write $d:=((ca)^kc)^{(1,3)}ca$ in Theorem \ref{3.8}, where $((ca)^kc)^{(1,3)}\in (ca)^kc\{1,3\}$.

In the following result, we will develop the criteria for the $(b,c)$-core-EP inverse of $a$ with $k\in {\rm I}_{(b,c)}(a)$ in terms of the ${((ca)^{k}b,(ca)^{k}c)}$-inverse of $d$ and a $\{1,3\}$-inverse of $(ca)^{k}c$ (resp., $(ca)^{k+1}$ or $(ca)^{k+1}b$).

\begin{theorem}\label{3.8}
Let $a,b,c\in S$. Then the following conditions are equivalent{\rm:}

\emph{(i)} $a$ is $(b,c)$-core-EP invertible with $k\in I_{(b,c)}(a)$.

\emph{(ii)} $(ca)^{k}c\in S^{(1,3)}$ and $d$ is ${((ca)^{k}b,(ca)^{k}c)}$-invertible.

\emph{(iii)}  $(ca)^{k+1}\in S^{(1,3)}$ and $d$ is ${((ca)^{k}b,(ca)^{k}c)}$-invertible.

\emph{(iv)} $(ca)^{k+1}b\in S^{(1,3)}$ and $d$ is ${((ca)^{k}b,(ca)^{k}c)}$-invertible.

In this case, we have
\begin{eqnarray*}
a_{(b,c)}^{\tiny{\textcircled{D}}}&=&d^{((ca)^{k}b,(ca)^{k}c)}((ca)^{k}c)^{(1,3)}\\
&=&d^{((ca)^{k}b,(ca)^{k}c)}a((ca)^{k+1})^{(1,3)}\\
&=&(ca)^{k}b((ca)^{k+1}b)^{(1,3)},
\end{eqnarray*}
where $d=((ca)^kc)^{(1,3)}ca$.
\end{theorem}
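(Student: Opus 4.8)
The plan is to separate the statement into two independent tasks: the equivalence (i) $\Leftrightarrow$ (ii), which carries all the core-EP content, and the block (ii) $\Leftrightarrow$ (iii) $\Leftrightarrow$ (iv), which merely records that three visibly $\mathcal{R}$-related elements are simultaneously $\{1,3\}$-invertible. Throughout I abbreviate $P=(ca)^{k}c$ and $Q=(ca)^{k}b$, and, whenever $P$ is $\{1,3\}$-invertible, I write $g=P^{(1,3)}$, $d=g\,ca$ and $e=Pg$; here $e^{2}=e=e^{*}$, $eP=P$, and taking adjoints in $PgP=P$ gives $P^{*}e=P^{*}$. For a $(Q,P)$-inverse $y=d^{(Q,P)}$ I shall use the identities $y\,d\,Q=Q$, $P\,d\,y=P$ and $y\in QSy\cap ySP$, which are just Drazin's defining relations and which also yield $y\,\mathcal{R}\,Q$ and $y\,\mathcal{L}\,P$. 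A small fact I will invoke repeatedly is that two Hermitian idempotents with the same right ideal are equal, together with its companion: $u\in S^{(1,3)}$ iff $uS=fS$ for some Hermitian idempotent $f$ (take $f=uu^{(1,3)}$; conversely, from $f=us$ and $u=ft$ one checks $usu=fu=u$ and $(us)^{*}=us$, so $s\in u\{1,3\}$). In particular $\{1,3\}$-invertibility is an $\mathcal{R}$-class invariant.

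The first genuine step is to extract the Green relations $P\,\mathcal{R}\,(ca)^{k+1}\,\mathcal{R}\,(ca)^{k+1}b$ from (ii) alone. Since $(ca)^{k+1}=Pa$ and $(ca)^{k+1}b=Pab$ both lie in $PS$, it suffices to return $P$ into $(ca)^{k+1}b\,S$. Writing $y=Qt$ (legitimate as $y\,\mathcal{R}\,Q$) gives $ca\,y=(ca)^{k+1}b\,t$, and substituting into $P=P\,d\,y=Pg\,(ca\,y)$, together with the cancellation $Pg\,(ca)^{k+1}=(PgP)a=Pa=(ca)^{k+1}$, yields $P=(ca)^{k+1}b\,t\in(ca)^{k+1}b\,S$. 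Combined with the $\mathcal{R}$-invariance of $\{1,3\}$-invertibility this delivers (ii) $\Rightarrow$ (iii) and (ii) $\Rightarrow$ (iv) at once, while (iii) $\Rightarrow$ (ii) and (iv) $\Rightarrow$ (ii) are immediate, the $\{1,3\}$-invertibility of $P$ being already presupposed by the very formation of $d$.

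For (i) $\Rightarrow$ (ii) I start from a $(b,c)$-core-EP inverse $x$; then $P\in S^{(1,3)}$, and by Theorem \ref{3.3}(ii) and Definition \ref{3.1} one has $x\in QS$, $cax\,P=P$, $(cax)^{*}=cax$, $x\,cax=x$, $x(ca)^{k+1}b=Q$ and $x\,\mathcal{L}\,P^{*}$. I propose $y=xP$ and verify Drazin's relations: $P\,d\,y=Pg\,ca\,x\,P=e\,(cax\,P)=eP=P$, and, using $x\,e=x$ (which follows from $x\,\mathcal{L}\,P^{*}$ and $P^{*}e=P^{*}$), $y\,d\,Q=x\,e\,(ca)^{k+1}b=x(ca)^{k+1}b=Q$; the membership $y\in QSy\cap ySP$ comes from $x=x\,cax$ and the regularity of $P$. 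Hence $d^{(Q,P)}=xP$, and $d^{(Q,P)}g=xPg=x$, which is the first displayed formula. The reverse implication (ii) $\Rightarrow$ (i) sets $x=yg$ and checks the conditions of Theorem \ref{3.3}(iii); its only delicate point, and the main obstacle of the whole argument, is to force $cax$ to be Hermitian with $cax\,P=P$. Both reduce to the single identity $ca\,y=P$: from $ca\,y\in(ca)^{k+1}b\,S=PS$ (the Green relation above) one gets $e\,(ca\,y)=ca\,y$, whereas $P\,d\,y=P$ reads $e\,(ca\,y)=P$, so $ca\,y=P$, whence $cax=ca\,yg=Pg=e$ is a Hermitian idempotent and $cax\,P=eP=P$. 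The conditions $x\in QS$ and $x(ca)^{k+1}b=Q$ then drop out of $y\,\mathcal{R}\,Q$ and $y\,d\,Q=Q$.

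Finally the remaining two formulas follow from uniqueness (Theorem \ref{3.5}) together with the relations already proved. Reusing the verification of the third paragraph, $d^{(Q,P)}=xP$ does not depend on which $\{1,3\}$-inverse is used to build $d$, and for any $\{1,3\}$-inverse $h$ of $P$ one has $xPh=x$ (again from $x\,\mathcal{L}\,P^{*}$ and $P^{*}Ph=P^{*}$); so $x=d^{(Q,P)}h$ for every such $h$. Since $P\,\mathcal{R}\,(ca)^{k+1}$, writing $P=(ca)^{k+1}r$ shows that $a\,((ca)^{k+1})^{(1,3)}$ is itself a $\{1,3\}$-inverse of $P$, and substituting it for $h$ gives the second formula. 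For the third, I observe that $cax$ and $(ca)^{k+1}b\,((ca)^{k+1}b)^{(1,3)}$ are both Hermitian idempotents with right ideal $(ca)^{k+1}b\,S=PS$, hence equal; therefore $x=x\,cax=x\,(ca)^{k+1}b\,((ca)^{k+1}b)^{(1,3)}=Q\,((ca)^{k+1}b)^{(1,3)}$, using $x(ca)^{k+1}b=Q$. By uniqueness all three expressions coincide with $a_{(b,c)}^{\tiny{\textcircled{D}}}$.
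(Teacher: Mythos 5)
Your proposal is correct, and it reaches the theorem by a genuinely different route than the paper. The paper proves one cycle (i) $\Rightarrow$ (ii) $\Rightarrow$ (iii) $\Rightarrow$ (iv) $\Rightarrow$ (i): the links (ii) $\Rightarrow$ (iii) $\Rightarrow$ (iv) are pushed through by hand via containments of the form $u\in Su^{*}u$ (Lemma \ref{3.7}), the existence of $d^{((ca)^{k}b,(ca)^{k}c)}$ in (i) $\Rightarrow$ (ii) is obtained from Drazin's two-sided criterion (Lemma \ref{bc inverse}) rather than exhibited, and (iv) $\Rightarrow$ (i) constructs $x=(ca)^{k}b((ca)^{k+1}b)^{(1,3)}$ directly, with the $d$-formula checked afterwards by a separate computation. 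You instead split the statement into two blocks: you dispose of (ii) $\Leftrightarrow$ (iii) $\Leftrightarrow$ (iv) by extracting the Green relations $P\,\mathcal{R}\,(ca)^{k+1}\,\mathcal{R}\,(ca)^{k+1}b$ from the invertibility of $d$ (your cancellation $Pg(ca)^{k+1}=(ca)^{k+1}$ is the same mechanism that makes $PdQ=(ca)^{k+1}b$ in the paper's containment chain) and then invoking that $\{1,3\}$-invertibility is an $\mathcal{R}$-class invariant, proved via your Hermitian-idempotent lemma; and you prove (i) $\Leftrightarrow$ (ii) through the explicit correspondence $y=xP$, $x=yg$, which actually identifies $d^{(Q,P)}=a_{(b,c)}^{\tiny{\textcircled{D}}}(ca)^{k}c$. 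This buys more than the theorem asks: it shows the $(Q,P)$-inverse of $d$ is independent of which $g\in P\{1,3\}$ builds $d$, and that $a_{(b,c)}^{\tiny{\textcircled{D}}}=d^{(Q,P)}h$ for \emph{every} $h\in P\{1,3\}$, which subsumes the paper's second displayed formula (both arguments verify $a((ca)^{k+1})^{(1,3)}\in P\{1,3\}$ the same way); your derivation of the third formula by matching the Hermitian idempotents $cax$ and $(ca)^{k+1}b((ca)^{k+1}b)^{(1,3)}$ along the right ideal $PS$ is likewise different from, and no heavier than, the paper's direct construction. Two points you should make explicit in a final write-up, though neither is a gap: in (i) $\Rightarrow$ (ii) the assertion $P\in S^{(1,3)}$ needs the one-liner $P=caxP\in SxP=SP^{*}P$ plus Lemma \ref{3.7}, exactly as in the paper's proof; and your reading that conditions (iii) and (iv) presuppose $P\in S^{(1,3)}$ through the very formation of $d$ is indeed the paper's stated convention (it fixes $((ca)^{k}c)^{(1,3)}\in(ca)^{k}c\{1,3\}$ before the theorem), so your immediate converse arrows (iii), (iv) $\Rightarrow$ (ii) are legitimate.
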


\begin{proof}
(i) $\Rightarrow$ (ii) Since $a$ is $(b,c)$-core-EP invertible with $k\in I_{(b,c)}(a)$, it follows that $(ca)^kc=cax(ca)^kc\in Sx(ca)^kc=S((ca)^kc)^*(ca)^kc$ for some $x\in S$, so that $\{1,3\}$-invertible by Lemma \ref{3.7}. It next suffices to prove the other implication. Given (i), then for any $((ca)^kc)^-\in (ca)^kc\{1\}$, we have at once $(ca)^{k}c= cax(ca)^{k}c\in (ca)^{k+1}bS=(ca)^kcd(ca)^kbS$. A similar argument gives $(ca)^{k}b\in S(ca)^kcd(ca)^kb$ in terms of $x(ca)^{k+1}b=(ca)^{k}b$, so that $d$ is $((ca)^{k}b,(ca)^{k}c)$-invertible by Lemma \ref{bc inverse}.

(ii) $\Rightarrow$ (iii) From $(ca)^{k}c\in S^{(1,3)}$, it follows that $(ca)^{k}c\in S((ca)^{k}c)^{*}(ca)^{k}c$, and $(ca)^{k+1}\in S((ca)^{k}c)^{*}(ca)^{k+1}\subseteq  S((ca)^{k+1}b)^{*}(ca)^{k+1}\subseteq S((ca)^{k+1})^{*}(ca)^{k+1}$ since $d\in S^{((ca)^{k}b,(ca)^{k}c)}$ gives $(ca)^{k}c\in (ca)^{k+1}bS$, as required.

(iii) $\Rightarrow$ (iv) The condition $d\in S^{((ca)^{k}b,(ca)^{k}c)}$ implies $(ca)^{k}c\in (ca)^{k+1}bS$, and consequently $(ca)^{k+1}\in (ca)^{k+1}bSa\subseteq (ca)^{k+1}bS$. Notice that $(ca)^{k+1}\in S((ca)^{k+1})^{*}(ca)^{k+1}$. Then $(ca)^{k+1}b\in S((ca)^{k+1}b)^{*}(ca)^{k+1}b$, i.e., $(ca)^{k+1}b\in S^{(1,3)}$.

(iv) $\Rightarrow$ (i) Since $d$ is ${((ca)^{k}b,(ca)^{k}c)}$-invertible, we have that $(ca)^{k}b\in S(ca)^{k+1}b$ and $(ca)^{k}c\in (ca)^{k+1}bS$, so that there exist some $s,t\in S$ such that $(ca)^{k}b= s(ca)^{k+1}b$ and $(ca)^{k}c= (ca)^{k+1}bt$. Write $x:=(ca)^{k}b((ca)^{k+1}b)^{(1,3)}\in (ca)^{k}bS$, we next show that $x$ is the $(b,c)$-core-EP inverse of $a$ by the following three steps.

(1) $cax=(ca)^{k+1}b((ca)^{k+1}b)^{(1,3)}=(cax)^{*}$.

(2) We have
\begin{eqnarray*}
cax(ca)^kc&=&(ca)^{k+1}b((ca)^{k+1}b)^{(1,3)}(ca)^kc\\
&=&(ca)^{k+1}b((ca)^{k+1}b)^{(1,3)}(ca)^{k+1}bt\\
&=&(ca)^{k+1}bt\\
&=&(ca)^kc.
\end{eqnarray*}

(3) It follows that
\begin{eqnarray*}
x(ca)^{k+1}b&=&(ca)^{k}b((ca)^{k+1}b)^{(1,3)}(ca)^{k+1}b\\
 &=&s(ca)^{k+1}b((ca)^{k+1}b)^{(1,3)}(ca)^{k+1}b\\
 &=&(ca)^{k}b.
\end{eqnarray*}

Next, it is straightforward to check that $y=d^{((ca)^{k}b,(ca)^{k}c)}((ca)^{k}c)^{(1,3)}\in (ca)^{k}bS$ is the $(b,c)$-core-EP inverse of $a$. Write $z=d^{((ca)^{k}b,(ca)^{k}c)}$, then $(ca)^kcdz=(ca)^kc$ and $z=(ca)^{k}bt$ for some $t\in S$. We here get $caz=(ca)^kc$ as
\begin{eqnarray*}
 caz&=&(ca)^{k+1}bt=(ca)^kc((ca)^kc)^{(1,3)}(ca)^kcabt\\
 &=&(ca)^kc((ca)^kc)^{(1,3)}ca(ca)^kbt=(ca)^kcdz\\
 &=&(ca)^kc.
 \end{eqnarray*}
It follows that $cay=caz((ca)^kc)^{(1,3)}=(cay)^*$, $cay(ca)^kc=(ca)^kc$ and $y(ca)^{k+1}b=(ca)^kb$, as required.

Note that $a((ca)^{k+1})^{(1,3)}\in (ca)^{k}c\{1,3\}$. Then $d^{((ca)^{k}b,(ca)^{k}c)}a((ca)^{k+1})^{(1,3)}=d^{((ca)^{k}b,(ca)^{k}c)}((ca)^{k}c)^{(1,3)}$.  \hfill$\Box$
\end{proof}

As previously mentioned in Lemma \ref{bc inverse}, $a$ is $(b,c)$-invertible iff $c\in c(c^-ca)bS$ and $b\in Sc(c^{-}ca)b$, iff $c^-ca$ is $(b,c)$-invertible, provided that $c^-\in c\{1\}$.

An immediate consequence of Theorem \ref{3.8} is the following corollary.

\begin{corollary} \label{Mosic's result} {\rm \cite[Theorem 2.7]{Zhu2024}}
Let $a,b,c\in S$. The following statements are equivalent{\rm :}

\emph{(i)} $a\in S_{(b,c)}^{\tiny\textcircled{\tiny{\#}}}$.

\emph{(ii)} $a\in S^{(b,c)}$ and $c\in S^{(1,3)}$.

\emph{(iii)} $a\in S^{(b,c)}$ and $ca\in S^{(1,3)}$.

\emph{(iv)} $a\in S^{(b,c)}$ and $cab\in S^{(1,3)}$.

In this case, $a_{(b,c)}^{\tiny\textcircled{\tiny{\#}}}=a^{(b,c)}c^{(1,3)}=a^{(b,c)}a(ca)^{(1,3)}=b(cab)^{(1,3)}$.
\end{corollary}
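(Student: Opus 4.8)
The plan is to read off the corollary as the specialization of Theorem \ref{3.8} to $k=0$, and then to trade the auxiliary element $d$ for $a$ itself. First I would record what the $k=0$ case says: the products collapse to $(ca)^{0}c=c$, $(ca)^{0+1}=ca$, $(ca)^{0+1}b=cab$ and $(ca)^{0}b=b$, so that $d=((ca)^{0}c)^{(1,3)}ca=c^{(1,3)}ca$. Since membership $0\in {\rm I}_{(b,c)}(a)$ is, by Definition \ref{3.1} read with $k=0$ (using $S^{1}=S$, so that $x\mathcal{R}b$ and $x\mathcal{L}c^{*}$ become $xS=bS$ and $Sx=Sc^{*}$), exactly the assertion $a\in S_{(b,c)}^{\tiny\textcircled{\tiny{\#}}}$, condition (i) of Theorem \ref{3.8} with $k=0$ coincides with condition (i) of the corollary. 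Specializing the remaining three conditions then yields the equivalence of (i) with ``$c$ (resp. $ca$, $cab$) $\in S^{(1,3)}$ and $d=c^{(1,3)}ca$ is $(b,c)$-invertible.''

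The only substantive step is to replace ``$d$ is $(b,c)$-invertible'' by ``$a$ is $(b,c)$-invertible'' and to match the inverses. Here I would compute $cdb$ directly: because $c^{(1,3)}\in c\{1,3\}\subseteq c\{1\}$ we have $cc^{(1,3)}c=c$, whence
\[
cdb=c\,(c^{(1,3)}ca)\,b=(cc^{(1,3)}c)ab=cab.
\]
By Lemma \ref{bc inverse} the $(b,c)$-invertibility of $d$ is controlled solely by the memberships $b\in Scdb$ and $c\in cdbS$, and that of $a$ by $b\in Scab$ and $c\in cabS$; as $cdb=cab$ these two pairs of conditions are literally identical. Consequently $d$ is $(b,c)$-invertible iff $a$ is, and, for any factorizations $b=v(cab)$ and $c=(cab)w$, the formula $a^{(b,c)}=bw=vc$ of Lemma \ref{bc inverse} applies verbatim to $d$, yielding $d^{(b,c)}=a^{(b,c)}$. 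Substituting this identity together with $k=0$ into the three displayed expressions of Theorem \ref{3.8} produces $a_{(b,c)}^{\tiny\textcircled{\tiny{\#}}}=a^{(b,c)}c^{(1,3)}=a^{(b,c)}a(ca)^{(1,3)}=b(cab)^{(1,3)}$, as claimed.

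The point that requires care---and the only one I would call an obstacle---is the well-formedness of the middle two conditions: the symbol $d=c^{(1,3)}ca$ presupposes $c\in S^{(1,3)}$, whereas conditions (iii) and (iv) a priori only posit $\{1,3\}$-invertibility of $ca$ or of $cab$. This is already resolved inside Theorem \ref{3.8}, whose chain (i)$\Rightarrow$(ii)$\Rightarrow$(iii)$\Rightarrow$(iv)$\Rightarrow$(i) shows that, granted the $(b,c)$-invertibility of $d$, $\{1,3\}$-invertibility of any one of $c$, $ca$, $cab$ forces it for the remaining two; the underlying reason is the chain $c\mathcal{R}ca\mathcal{R}cab$, which holds once $a\in S^{(b,c)}$ (it follows from $c\in cabS$ together with $cab\in caS$ and $ca\in cS$) and along which $\{1,3\}$-invertibility is preserved. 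Hence $c\in S^{(1,3)}$ is available in every case, $d$ is legitimately defined, and the identity $cc^{(1,3)}c=c$ invoked above is justified throughout.
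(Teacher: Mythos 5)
Your proposal is correct and follows essentially the same route as the paper, which presents this corollary as an immediate specialization of Theorem \ref{3.8} to $k=0$, relying on the remark stated just before it that $a$ is $(b,c)$-invertible iff $c^{-}ca$ is $(b,c)$-invertible for $c^{-}\in c\{1\}$ (your computation $cdb=cab$ together with Lemma \ref{bc inverse} is exactly this observation, and it also yields $d^{(b,c)}=a^{(b,c)}$ as you note). Your additional care about the well-formedness of $d$ in conditions (iii) and (iv), via the chain $c\,\mathcal{R}\,ca\,\mathcal{R}\,cab$ once $a\in S^{(b,c)}$ and the preservation of $\{1,3\}$-invertibility along $\mathcal{R}$, correctly fills a detail the paper leaves implicit.
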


The next result describes how the $(b,c)$-core-EP inverse is related to Drazin's $(b,c)$-inverse.

\begin{theorem}\label{3.16}
Let $a,b,c\in S$. Then $a$ is $(b,c)$-core-EP invertible with $k\in {\rm I}_{(b,c)}(a)$ iff there exist some $x\in S$ and some $k\in \mathbb{N}$ such that $xcax=x$, $xS=(ca)^{k}bS$ and $Sx=S((ca)^{k}c)^{*}$. In this case, the $(b,c)$-core-EP inverse of $a$ coincides with the $((ca)^kb, ((ca)^kc)^*)$-inverse of $ca$.
\end{theorem}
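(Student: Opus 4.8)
Writing $B=(ca)^{k}b$ and $C=(ca)^{k}c$, the plan is to read the three displayed conditions as the defining conditions of a Drazin $(b,c)$-inverse, which simultaneously settles the ``In this case'' clause. Since $S$ is a monoid we have $S^{1}=S$, so the relations $x\mathcal{R}B$ and $x\mathcal{L}C^{*}$ in Definition \ref{3.1} are literally the ideal equalities $xS=BS$ and $Sx=SC^{*}$; thus the only genuine difference from Definition \ref{3.1} is that $cax(ca)^{k}c=(ca)^{k}c$ is traded for $xcax=x$. The bridge is the elementary fact that, for any $a',b',c'\in S$, an element $y$ is the $(b',c')$-inverse of $a'$ iff $ya'y=y$, $yS=b'S$ and $Sy=Sc'$. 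The forward implication is immediate; for the converse one writes $y=b's_{1}$, $b'=ys_{2}$, $y=t_{1}c'$, $c'=t_{2}y$ and checks $ya'b'=(ya'y)s_{2}=b'$, $c'a'y=t_{2}(ya'y)=c'$, $y=ya'y=b's_{1}a'y\in b'Sy$ and $y=ya'y\in ySc'$. Taking $a'=ca$, $b'=B$, $c'=C^{*}$ identifies the asserted system with the statement that $x$ is the $(B,C^{*})$-inverse of $ca$.

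For the direction $(\Rightarrow)$ I would invoke Theorem \ref{3.3}. If $x=a_{(b,c)}^{\tiny{\textcircled{D}}}$ with $k\in {\rm I}_{(b,c)}(a)$, then Theorem \ref{3.3}(ii) gives $xcax=x$, while $x\mathcal{R}B$ and $x\mathcal{L}C^{*}$ give $xS=BS$ and $Sx=SC^{*}$. By the characterization above this $x$ is the $(B,C^{*})$-inverse of $ca$, which proves both the implication and the coincidence of the two inverses.

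For $(\Leftarrow)$, suppose $x$ satisfies the system, i.e. $x$ is the $(B,C^{*})$-inverse of $ca$. Set $e=cax$; then $e^{2}=ca(xcax)=cax=e$. The defining relation $C^{*}(ca)x=C^{*}$ reads $C^{*}e=C^{*}$, and taking adjoints gives $e^{*}C=C$, that is $(cax)^{*}(ca)^{k}c=(ca)^{k}c$. As $x\mathcal{R}B$ and $x\mathcal{L}C^{*}$ are already in hand, the only remaining point for Definition \ref{3.1} is to upgrade $(cax)^{*}C=C$ to $cax\,C=C$, equivalently to prove that the idempotent $e=cax$ is Hermitian; granting this, $x$ satisfies Definition \ref{3.1} and, by Theorem \ref{3.5}, equals $a_{(b,c)}^{\tiny{\textcircled{D}}}$.

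The hard part will be exactly this Hermitian-ness of $e=cax$: every purely formal manipulation reduces it to itself, which reflects that in a bare $*$-monoid one cannot cancel $C^{*}$ on the left of $C^{*}(eC)=C^{*}C$. The route I would take is to first show that $C=(ca)^{k}c$ is $\{1,3\}$-invertible and then transfer from left to right through its Hermitian projection. Regularity of $C$ comes for free (from $xcax=x$, $x$ is regular, and $Sx=SC^{*}$ forces $C^{*}$, hence $C$, regular); to promote this to $C\in SC^{*}C$ (Lemma \ref{3.7}(i)) I would feed the two one-sided halves of the $(B,C^{*})$-invertibility of $ca$ from Lemma \ref{bc inverse}, namely $B\in SC^{*}(ca)B$ and $C^{*}\in C^{*}(ca)BS$, into the chain identity $(ca)^{k}c=c(ac)^{k}$ together with $C^{*}e=C^{*}$ and $B\in xS$. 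Once a $\{1,3\}$-inverse $C^{(1,3)}$ exists, put $P=CC^{(1,3)}$: then $C^{*}P=C^{*}$ with $e\in SC^{*}$ gives $eP=e$, while left-multiplying $C^{*}e=C^{*}$ by $(C^{(1,3)})^{*}$ gives $Pe=P$; for idempotents these force $eS=PS=CS=e^{*}S$, so $e\in e^{*}S$, and an idempotent in its own adjoint ideal is self-adjoint (from $e=e^{*}s$ one gets $e^{*}e=e$, which is Hermitian). This delivers $caxC=(cax)^{*}C=C$ and finishes the proof. I expect the genuinely delicate step to be the independent verification that $(ca)^{k}c$ is $\{1,3\}$-invertible when $k\geq 1$, since for $k=0$ it is immediate from $e=cax\in cS=CS$.
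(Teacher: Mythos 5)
Your overall architecture is sound and matches the paper up to the decisive step: the forward direction via Theorem \ref{3.3}(ii), the identification of the three conditions with the $((ca)^{k}b,((ca)^{k}c)^{*})$-inverse of $ca$ (which indeed settles the ``in this case'' clause), and the reduction of the converse to the Hermitian-ness of $e=cax$ are all correct. But your proof of that Hermitian-ness contains a genuine error. From $eP=e$ and $Pe=P$ you conclude ``$eS=PS$''; in fact those two relations say $e\in SP$ and $P\in Se$, i.e. $Se=SP$ --- they make $e$ and $P$ $\mathcal{L}$-related, not $\mathcal{R}$-related --- and an idempotent merely $\mathcal{L}$-related to a Hermitian idempotent need not be Hermitian: in $M_2(\mathbb{C})$ take $e=\left(\begin{smallmatrix}1&0\\1&0\end{smallmatrix}\right)$ and $P=\left(\begin{smallmatrix}1&0\\0&0\end{smallmatrix}\right)$, which satisfy $eP=e$, $Pe=P$, $P^{*}=P$, yet $e^{*}\neq e$. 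So the chain $eS=PS=CS=e^{*}S$ breaks at its first link, and with it the claimed $e\in e^{*}S$. Separately, the $\{1,3\}$-invertibility of $C=(ca)^{k}c$ that your detour requires is only sketched (``I would feed\dots''), not proved --- though it is true: with your notation $B=(ca)^{k}b$, from $C^{*}\in C^{*}(ca)BS=C^{*}C\,abS$, taking adjoints gives $C\in S(ab)^{*}C^{*}C\subseteq SC^{*}C$, so Lemma \ref{3.7}(i) applies.

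The repair is short and shows the whole $\{1,3\}$/projection detour is unnecessary; it is essentially the paper's three-line computation. From $xS=BS$ write $x=(ca)^{k}bs$, so $e=cax=(ca)^{k+1}bs=C\,abs\in CS$ (your chain identity); from $C^{*}e=C^{*}$, taking adjoints, $C=e^{*}C$, whence $CS\subseteq e^{*}S$ and therefore $e\in e^{*}S$ --- exactly the hypothesis of your own idempotent lemma, which yields $e=e^{*}$ and then $cax(ca)^{k}c=e^{*}C=C$, completing Definition \ref{3.1}. The paper phrases the same two ingredients multiplicatively: $C=(cax)^{*}C$ post-multiplied by $ab$ gives $(ca)^{k+1}b=(cax)^{*}(ca)^{k+1}b$, and then $cax=(ca)^{k+1}bs=(cax)^{*}(ca)^{k+1}bs=(cax)^{*}cax$, which is visibly Hermitian. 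So your reduction was correct and every needed ingredient was already on your page; only the assembly through $P$ went through the wrong Green's relation.
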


\begin{proof}
The ``only if'' part follows by Theorem \ref{3.3} (i) $\Rightarrow$ (ii).

The ``if'' part: To prove that $a$ is $(b,c)$-core-EP invertible, it suffices to show $(ca)^{k}c=cax(ca)^{k}c$. As $Sx=S((ca)^{k}c)^{*}$, then there exists some $t\in S$ such that $((ca)^{k}c)^{*}=tx=t(xcax)=((ca)^{k}c)^{*}cax$, and thus $(ca)^{k}c=(cax)^{*}(ca)^{k}c$. Post-multiplying $(ca)^{k}c=(cax)^{*}(ca)^{k}c$ by $ab$ gives $(ca)^{k+1}b=(cax)^{*}(ca)^{k+1}b$. Similarly, $xS=(ca)^{k}bS$ implies $x=(ca)^kbs$ for some $s\in S$, we have at once $cax=(ca)^{k+1}bs$, so that $cax=(ca)^{k+1}bs=(cax)^{*}(ca)^{k+1}bs=(cax)^{*}cax=(cax)^*$, which in turn gives $(ca)^{k}c=cax(ca)^{k}c$.
\hfill$\Box$
\end{proof}

Combining Lemma \ref{bc inverse} and Theorem \ref{3.16}, we get the following proposition.

\begin{proposition} \label{intersetion} Let $a,b,c\in S$. Then $a$ is $(b,c)$-core-EP invertible with $k\in {\rm I}_{(b,c)}(a)$ iff $(ca)^kb\in S((ca)^kc)^*(ca)^{k+1}b$ and $((ca)^kc)^*\in ((ca)^kc)^*(ca)^{k+1}bS$ for some $k\in \mathbb{N}$.
\end{proposition}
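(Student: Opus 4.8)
The plan is to obtain the equivalence by chaining Theorem~\ref{3.16} with the characterization of $(b,c)$-invertibility in Lemma~\ref{bc inverse}. By Theorem~\ref{3.16}, $a$ is $(b,c)$-core-EP invertible with $k\in {\rm I}_{(b,c)}(a)$ if and only if $ca$ is $((ca)^kb,((ca)^kc)^*)$-invertible, since the conditions $xcax=x$, $xS=(ca)^kbS$ and $Sx=S((ca)^kc)^*$ are exactly the defining conditions (in the one-sided-ideal form) for the $((ca)^kb,((ca)^kc)^*)$-inverse of $ca$. So the whole task reduces to applying Lemma~\ref{bc inverse} to the element $ca$ with the distinguished pair $b'=(ca)^kb$ and $c'=((ca)^kc)^*$.

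Concretely, I would first write out what Lemma~\ref{bc inverse} says in this instance: the element $ca$ is $(b',c')$-invertible iff $b'\in S\,c'(ca)b'$ and $c'\in c'(ca)b'S$. Substituting $b'=(ca)^kb$ and $c'=((ca)^kc)^*$, the product $c'(ca)b'$ becomes $((ca)^kc)^*(ca)(ca)^kb=((ca)^kc)^*(ca)^{k+1}b$. Hence the two membership conditions read exactly $(ca)^kb\in S\,((ca)^kc)^*(ca)^{k+1}b$ and $((ca)^kc)^*\in ((ca)^kc)^*(ca)^{k+1}bS$, which are the two stated conditions of the Proposition. This is the entire content of the argument.

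The only genuine step requiring care is the translation between Theorem~\ref{3.16} and the standard definition of the $(b,c)$-inverse. The definition in the Preliminaries states that $a$ is $(b,c)$-invertible via $y$ when $y\in bSy\cap ySc$, $yab=b$ and $cay=c$; Drazin's two-sided-ideal reformulation (equivalently $yS=bS$ and $Sy=Sc$ together with $y\in ySay$ or $y=yay'$-type regularity) is what matches the hypotheses $xS=(ca)^kbS$, $Sx=S((ca)^kc)^*$ and $xcax=x$ of Theorem~\ref{3.16}. I would make this identification explicit, noting that $x$ playing the role of the $(b,c)$-inverse of $ca$ carries $a=ca$ in the lemma's slot. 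I expect the main (though still minor) obstacle to be bookkeeping: confirming that the ideal conditions $xS=(ca)^kbS$ and $Sx=S((ca)^kc)^*$ together with $xcax=x$ are genuinely equivalent to the one-element $(b',c')$-invertibility of $ca$, so that Lemma~\ref{bc inverse} applies verbatim. Once that identification is in place, the computation of $c'(ca)b'$ and the substitution are immediate, and the two displayed membership conditions follow directly.
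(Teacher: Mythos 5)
Your proposal is correct and is exactly the paper's argument: the paper proves this proposition precisely by combining Theorem~\ref{3.16} (which already asserts that the $(b,c)$-core-EP inverse of $a$ is the $((ca)^kb,((ca)^kc)^*)$-inverse of $ca$) with Lemma~\ref{bc inverse}, substituting $b'=(ca)^kb$, $c'=((ca)^kc)^*$ so that $c'(ca)b'=((ca)^kc)^*(ca)^{k+1}b$. The one step you flag for care --- that $xcax=x$, $xS=(ca)^kbS$, $Sx=S((ca)^kc)^*$ is Drazin's standard ideal-theoretic reformulation of $(b',c')$-invertibility of $ca$ --- is indeed valid in a monoid and is exactly what the paper leaves implicit.
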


Following the initial work by Drazin \cite{Drazin2016} in a semigroup $S$, for any $a,b,c\in S$, the element $a$ is called left $(b,c)$-invertible if $b\in Scab$, i.e., if there exists some $x\in Sc$ such that $b=xab$. By $a_l^{(b,c)}$ we denote a left $(b,c)$-inverse of $a$. The element $a$ is right $(b,c)$-invertible if $c\in cabS$, i.e., if there exists some $x\in bS$ such that $c=cax$. By $a_r^{(b,c)}$ we denote a right $(b,c)$-inverse of $a$. It is well known that $a$ is $(b,c)$-invertible iff $a$ is both left and right $(b,c)$-invertible.

We assume that $a$ is $(b,c)$-core-EP invertible with $k\in {\rm I}_{(b,c)}(a)$. Then, by Proposition \ref{intersetion}, we have $(ca)^kb\in S(ca)^{k+1}b=Sca(ca)^kb$, i.e., $a$ is left $((ca)^kb,c)$-invertible. Also, from $((ca)^kc)^*\in ((ca)^kc)^*(ca)^{k}cabS$, we get that $(ca)^kc\in S(ab)^*((ca)^kc)^*(ca)^{k}c \subseteq S((ca)^kc)^*(ca)^{k}c$, which implies that $(ca)^kc$ is \{1,3\}-invertible and $abs^*\in (ca)^kc\{1,3\}$, where $s\in S$ satisfies $(ca)^kc=s(ab)^*((ca)^kc)^*(ca)^{k}c$. To summarize, if $a$ is $(b,c)$-core-EP invertible with $k\in {\rm I}_{(b,c)}(a)$, then $a$ is left $((ca)^kb,c)$-invertible and $abs^*\in (ca)^kc\{1,3\}$, where $s\in S$ satisfies $(ca)^kc=s(ab)^*((ca)^kc)^*(ca)^{k}c$.

It is natural to inquire whether the converse statement above holds, i.e., whether $a$ is left $((ca)^kb,c)$-invertible and $abr\in (ca)^kc\{1,3\}$ for some $r\in S$ can imply the $(b,c)$-core-EP invertibility of $a$. The following result answers this question in the positive.

\begin{theorem} \label{new one-sided} Let $a,b,c\in S$. Then $a$ is $(b,c)$-core-EP invertible with $k\in {\rm I}_{(b,c)}(a)$ iff there exists some $k\in \mathbb{N}$ such that $a$ is left $((ca)^kb,c)$-invertible and $(ca)^kc$ is $\{1,3\}$-invertible with $abr\in (ca)^kc\{1,3\}$ for some $r\in S$.
\end{theorem}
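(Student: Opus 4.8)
The plan is to split the equivalence asymmetrically, since the forward implication has effectively been recorded in the discussion immediately preceding the statement. For the ``only if'' part I would just invoke that paragraph: if $a$ is $(b,c)$-core-EP invertible with $k\in {\rm I}_{(b,c)}(a)$, then Proposition \ref{intersetion} yields $(ca)^kb\in S((ca)^kc)^*(ca)^{k+1}b\subseteq S(ca)^{k+1}b$, which is exactly left $((ca)^kb,c)$-invertibility, together with $((ca)^kc)^*\in ((ca)^kc)^*(ca)^{k+1}bS$, from which Lemma \ref{3.7}(i) produces an element $r=s^*$ with $abr\in (ca)^kc\{1,3\}$ (and in particular $(ca)^kc\in S^{(1,3)}$). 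So the genuine content is the converse.

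For the ``if'' part, rather than constructing the inverse by hand I would reduce to Theorem \ref{3.8}. Writing $h:=((ca)^kc)^{(1,3)}$ and $d=h\,ca$, the hypothesis already supplies $(ca)^kc\in S^{(1,3)}$, so by the implication (ii)$\Rightarrow$(i) of Theorem \ref{3.8} it suffices to prove that $d$ is $((ca)^kb,(ca)^kc)$-invertible. By Lemma \ref{bc inverse} this amounts to the two memberships $(ca)^kb\in S((ca)^kc\,d\,(ca)^kb)$ and $(ca)^kc\in ((ca)^kc\,d\,(ca)^kb)S$.

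The computational heart is the identity $(ca)^kc\,d\,(ca)^kb=(ca)^{k+1}b$, which I would extract from the two elementary rewritings $ca\,(ca)^kb=(ca)^{k+1}b$ and $(ca)^kc\,ab=(ca)^{k+1}b$ together with the $\{1\}$-inverse relation $(ca)^kc\,h\,(ca)^kc=(ca)^kc$, namely
\[(ca)^kc\,d\,(ca)^kb=(ca)^kc\,h\,(ca)^{k+1}b=(ca)^kc\,h\,(ca)^kc\,ab=(ca)^kc\,ab=(ca)^{k+1}b.\]
Once this is in hand the first membership is precisely the assumed left $((ca)^kb,c)$-invertibility, while the second follows from the $\{1\}$-condition on $abr$: from $(ca)^kc\,abr\,(ca)^kc=(ca)^kc$ and $(ca)^kc\,ab=(ca)^{k+1}b$ one reads off $(ca)^kc=(ca)^{k+1}b\,(r(ca)^kc)\in (ca)^{k+1}bS$. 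Lemma \ref{bc inverse} then delivers the $((ca)^kb,(ca)^kc)$-invertibility of $d$, and Theorem \ref{3.8} closes the argument, yielding the explicit inverse as well.

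I expect the principal obstacle to be the temptation to build the inverse directly: the naive candidate $x=(ca)^kbr$ satisfies $(cax)^*=cax$ and $cax(ca)^kc=(ca)^kc$ at once, but it stumbles on the equation $x(ca)^{k+1}b=(ca)^kb$ of Theorem \ref{3.3}(iii), because the available data do not obviously license the left cancellation that equation demands. Routing everything through Theorem \ref{3.8} and Lemma \ref{bc inverse} sidesteps this difficulty entirely, concentrating all the delicacy into the collapse displayed above, which in turn rests only on the bookkeeping identity $(ca)^kc\,ab=(ca)^{k+1}b$.
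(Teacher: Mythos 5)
Your proof is correct, and it takes a genuinely different route from the paper's in the substantive (``if'') direction. The paper verifies the two memberships of Proposition \ref{intersetion} directly: from the $\{1,3\}$-conditions on $abr$ it extracts $(ca)^kc=((ca)^{k+1}br)^*(ca)^kc=r^*((ca)^{k+1}b)^*(ca)^kc$, hence $((ca)^kc)^*\in ((ca)^kc)^*(ca)^{k+1}bS$, and from left $((ca)^kb,c)$-invertibility together with the rewriting $S(ca)^kc=S((ca)^kc)^*(ca)^kc$ (valid since $(ca)^kc\in S^{(1,3)}$) it gets $(ca)^kb\in S(ca)^{k+1}b=S((ca)^kc)^*(ca)^{k+1}b$. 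You instead reduce to Theorem \ref{3.8}(ii)$\Rightarrow$(i), checking via Lemma \ref{bc inverse} that $d=((ca)^kc)^{(1,3)}ca$ is $((ca)^kb,(ca)^kc)$-invertible, with the collapse $(ca)^kc\,d\,(ca)^kb=(ca)^kc\,h\,(ca)^kc\,ab=(ca)^kc\,ab=(ca)^{k+1}b$ carrying all the weight; this is not circular, since Theorem \ref{3.8} is proved earlier and independently of the present statement. Each route buys something: the paper's argument is self-contained modulo Proposition \ref{intersetion} and uses the $\{3\}$-symmetry of $(ca)^{k+1}br$ explicitly, whereas your ``if'' direction is star-free and in fact never invokes the $\{3\}$-part of $abr$ at all --- you only use $abr\in(ca)^kc\{1\}$ together with the abstract $\{1,3\}$-invertibility of $(ca)^kc$, so you prove a formally sharper converse --- and in addition you inherit the explicit formulas $d^{((ca)^kb,(ca)^kc)}((ca)^kc)^{(1,3)}=(ca)^kb((ca)^{k+1}b)^{(1,3)}$ from Theorem \ref{3.8} at no extra cost. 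Your treatment of the ``only if'' part coincides with the paper's, which likewise declares it sufficient to prove the ``if'' part, relying on the discussion following Proposition \ref{intersetion}; and your diagnosis of the naive candidate $x=(ca)^kbr$ is apt: it does satisfy $(cax)^*=cax$ and $cax(ca)^kc=(ca)^kc$, but the condition $x(ca)^{k+1}b=(ca)^kb$ of Theorem \ref{3.3}(iii) is exactly what fails to be automatic, which is why the correct inverse is $(ca)^kb((ca)^{k+1}b)^{(1,3)}$ rather than $(ca)^kbr$.
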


\begin{proof}
It suffices to prove the ``if'' part. As $abr\in (ca)^kc\{1,3\}$ for some $r\in S$, we have that $(ca)^kc=((ca)^{k+1}br)^*(ca)^kc=r^*((ca)^{k+1}b)^*(ca)^kc\in S((ca)^{k+1}b)^*(ca)^kc$. Thus $((ca)^kc)^*\in ((ca)^kc)^*(ca)^{k+1}bS$.

Since $a$ is left $((ca)^kb,c)$-invertible, $(ca)^kb\in S(ca)^{k+1}b$. By the equivalence $(ca)^kc \in S^{(1,3)}\Leftrightarrow S(ca)^kc=S((ca)^kc)^*(ca)^kc$, we have at once $(ca)^kb\in S(ca)^{k+1}b=S(ca)^kc(ab)=S((ca)^kc)^*(ca)^kc(ab)=S((ca)^kc)^*(ca)^{k+1}b$. Therefore, by Proposition \ref{intersetion}, $a$ is $(b,c)$-core-EP invertible with $k\in {\rm I}_{(b,c)}(a)$.
\hfill$\Box$
\end{proof}

For any $a,b,c\in S$, if $a$ is $(b,c)$-core-EP invertible with $k\in {\rm I}_{(b,c)}(a)$ then $a$ is right $(b,(ca)^kc)$-invertible. Indeed, it follows from Theorem \ref{new one-sided} that $(ca)^kc$ is \{1,3\}-invertible and $abr\in (ca)^kc\{1,3\}$ for some $r\in S$. This gives $(ca)^kc=(ca)^kc abr(ca)^kc\in (ca)^kcabS$, i.e., $a$ is right $(b,(ca)^kc)$-invertible.

We herein note that if $a$ is $(b,c)$-core-EP invertible with $k\in {\rm I}_{(b,c)}(a)$ then $a$ is left $(a_{(b,c)}^{\tiny{\textcircled{D}}},c)$-invertible. Indeed, as shown previously, we get $(ca)^kb\in Sca(ca)^kb$ and hence we have $a_{(b,c)}^{\tiny{\textcircled{D}}}\in Scaa_{(b,c)}^{\tiny{\textcircled{D}}}$ since $a_{(b,c)}^{\tiny{\textcircled{D}}}\in (ca)^kbS$, as required.

In light of the previous theorem, we state a criterion for the $(b,c)$-core inverse of $a$, which improves the result in \cite[Theorem 2.7]{Zhu2024}. Quite surprisingly, when the $(b,c)$-inverse of $a$ is weaken to its left $(b,c)$-inverse, it is still possible to derive the formula of the $(b,c)$-core inverse.

\begin{corollary} \label{cor new} Let $a,b,c\in S$. Then $a$ is $(b,c)$-core invertible iff $a$ is left $(b,c)$-invertible and $c$ is $\{1,3\}$-invertible with $abs\in c\{1,3\}$ for some $s\in S$. In this case, $a_{(b,c)}^{\tiny{\textcircled{\#}}}=a_l^{(b,c)}c^{(1,3)}=br^*$, where $r\in S$ satisfies $c=r(cab)^*c$.
\end{corollary}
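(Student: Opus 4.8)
The plan is to reduce the whole statement to the degenerate case $k=0$ of the machinery already built, rather than to argue from scratch. The first observation is that the $(b,c)$-core inverse is nothing but the $(b,c)$-core-EP inverse with $0\in {\rm I}_{(b,c)}(a)$: since $x\,{\mathcal R}\,b\Leftrightarrow xS=bS$ and $x\,{\mathcal L}\,c^{*}\Leftrightarrow Sx=Sc^{*}$, Definition \ref{3.1} with $k=0$ collapses to $caxc=c$, $xS=bS$, $Sx=Sc^{*}$, which is verbatim the definition of the $(b,c)$-core inverse. Consequently $a$ is $(b,c)$-core invertible iff $0\in {\rm I}_{(b,c)}(a)$, and I may simply specialize Theorem \ref{new one-sided} at $k=0$. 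Putting $(ca)^{0}=1$ turns $(ca)^{k}b,(ca)^{k}c,(ca)^{k+1}b$ into $b,c,cab$, so Theorem \ref{new one-sided} reads off exactly as the desired equivalence: $a$ is $(b,c)$-core invertible iff $a$ is left $(b,c)$-invertible and $c\in S^{(1,3)}$ with $abs\in c\{1,3\}$ for some $s\in S$.

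For the closed forms I would first produce the expression $br^{*}$. From $abs\in c\{1,3\}$ one has $(cabs)^{*}=cabs$ and $c(abs)c=c$; post-multiplying the latter by $ab$ gives $(cab)s(cab)=cab$, and together with the self-adjointness this shows $s\in cab\{1,3\}$. Since the equivalence above already furnishes $0\in{\rm I}_{(b,c)}(a)$, the $k=0$ instance of the formula of Theorem \ref{3.8}, namely $a_{(b,c)}^{\tiny{\textcircled{D}}}=b(cab)^{(1,3)}$, applies and yields $a_{(b,c)}^{\tiny{\textcircled{\#}}}=bs$. Writing $r:=s^{*}$, the self-adjointness rewrites $cabs=(cabs)^{*}=r(cab)^{*}$, so that $c=c(abs)c=cabsc=r(cab)^{*}c$ and $bs=br^{*}$; this is the stated $br^{*}$ with $c=r(cab)^{*}c$.

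It remains to recover the factored form $a_{l}^{(b,c)}c^{(1,3)}$. Choosing the particular $\{1,3\}$-inverse $c^{(1,3)}=abs$ and any left $(b,c)$-inverse $y:=a_{l}^{(b,c)}$ (so $yab=b$), I get $a_{l}^{(b,c)}c^{(1,3)}=y(abs)=(yab)s=bs$, matching the previous paragraph. The delicate point — and the one I expect to be the main obstacle — is that this displayed identity should be unambiguous, i.e.\ hold for every choice of left $(b,c)$-inverse and of $\{1,3\}$-inverse, whereas for a merely left-invertible element the product $a_{l}^{(b,c)}c^{(1,3)}$ is generally choice-dependent. Here it is rescued by the fact that $(b,c)$-core invertibility forces full $(b,c)$-invertibility (Corollary \ref{Mosic's result}), hence $c=cabw$ for some $w\in S$. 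I would exploit this twice. First, any two $\{1,3\}$-inverses of $c$ induce the same self-adjoint idempotent $cc^{(1,3)}$ (from $cc^{-}c=c$ and $(cc^{-})^{*}=cc^{-}$), and any $y\in Sc$ only ever sees $cc^{(1,3)}$, so the value is independent of $c^{(1,3)}$. Second, if $z\in Sc$, say $z=z'c$, satisfies $zab=0$ (equivalently $z'cab=0$), then $zc^{(1,3)}=z'c\,c^{(1,3)}=z'cabw\,c^{(1,3)}=0$, so the difference of two left inverses is annihilated on the right by $c^{(1,3)}$; hence the value is independent of $a_{l}^{(b,c)}$ as well. With all choices eliminated, uniqueness of the $(b,c)$-core inverse (Theorem \ref{3.5}) identifies $a_{l}^{(b,c)}c^{(1,3)}=bs=br^{*}=a_{(b,c)}^{\tiny{\textcircled{\#}}}$, and everything outside this invariance check is routine bookkeeping of the $k=0$ specializations.
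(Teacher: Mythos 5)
Your handling of the equivalence is exactly the paper's route: the corollary is stated ``in light of'' Theorem \ref{new one-sided}, whose $k=0$ instance (together with the $k=0$ case of the discussion preceding it, for the ``only if'' half) is precisely the stated criterion, and your identification of $(b,c)$-core invertibility with $0\in {\rm I}_{(b,c)}(a)$ is the same observation the paper makes after Definition \ref{3.1}. For the formulas you genuinely diverge. The paper does not pass through $s\in cab\{1,3\}$ and Theorem \ref{3.8}: it fixes the particular choice $c^{(1,3)}:=abr^*$, which Lemma \ref{3.7} supplies from $c=r(cab)^*c=r(ab)^*c^*c\in Sc^*c$, computes $a_l^{(b,c)}c^{(1,3)}=(a_l^{(b,c)}ab)r^*=br^*$, and then verifies directly that $x=br^*$ satisfies $x\in bS$, $(cax)^*=cax$, $caxc=c$ and $xcab=b$, i.e.\ the $k=0$ form of Theorem \ref{3.3}(iii). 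Your detour (from $abs\in c\{1,3\}$ deduce $s\in cab\{1,3\}$, then quote the formula $b(cab)^{(1,3)}$ of Theorem \ref{3.8}, equivalently Corollary \ref{Mosic's result}) is correct and shorter, at the cost of leaning on uniqueness machinery the paper's self-contained verification avoids. One small incompleteness: the statement asserts $a_{(b,c)}^{\tiny{\textcircled{\#}}}=br^*$ for an \emph{arbitrary} $r$ with $c=r(cab)^*c$, whereas you only manufacture the particular $r=s^*$; to close this you need the converse observation --- again Lemma \ref{3.7}: any such $r$ gives $abr^*\in c\{1,3\}$ --- after which your second paragraph runs verbatim with $s:=r^*$.

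The one step that fails as written is your invariance check: you argue with ``$zab=0$'' and ``the difference of two left inverses,'' but $S$ is a $*$-monoid, with neither a zero element nor subtraction, so that argument is valid only in the $*$-ring specialization. (The paper sidesteps the question entirely by working with fixed choices; uniqueness of the $(b,c)$-core inverse then makes any expression proved equal to it legitimate.) The claim itself is true, and your own ingredients repair it without additivity: writing $a_l^{(b,c)}=zc$ and using right $(b,c)$-invertibility $c=cabw$ from Corollary \ref{Mosic's result}, one gets
\begin{equation*}
a_l^{(b,c)}c^{(1,3)}=z(cabw)c^{(1,3)}=(a_l^{(b,c)}ab)\,w\,c^{(1,3)}=b\,w\,c^{(1,3)},
\end{equation*}
which is independent of the choice of left $(b,c)$-inverse; and for a fixed left inverse the product $zc\,c^{(1,3)}=z\,(cc^{(1,3)})$ depends only on $cc^{(1,3)}$, which your observation (correct in any $*$-monoid) shows is the same for all $\{1,3\}$-inverses of $c$. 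With these two substitutions the proposal is a complete and correct proof.
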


\begin{proof} We first show that $x=a_l^{(b,c)}c^{(1,3)}=br^*\in bS$ is the $(b,c)$-core inverse of $a$, where $r\in S$ satisfies $c=r(cab)^*c$. Notice that $c=r(cab)^*c=r(ab)^*c^*c\in Sc^*c$ implies $abr^*\in c\{1,3\}$. Then $a_l^{(b,c)}c^{(1,3)}=a_l^{(b,c)}abr^*=(a_l^{(b,c)}ab)r^*=br^*$. Then we have

(1) Since $c=r(cab)^*c$ leads to $abr^*\in c\{1,3\}$, one has $cax=cabr^*=(cabr^*)^*=(cax)^*$.

(2) $caxc=(cax)^*c=(cabr^*)^*c=r(cab)^*c=c$.

(3) $xcab= a_l^{(b,c)}c^{(1,3)}cab=scc^{(1,3)}cab=scab=a_l^{(b,c)}ab=b$ since $a_l^{(b,c)}\in Sc$ gives $a_l^{(b,c)}=sc$ for some $s\in S$.
\hfill$\Box$
\end{proof}

From \cite{Zhu2023} where for any $a,w\in S$, $a$ is called $w$-core invertible if there exists some $x\in S$ such that $xawa=a$, $awx^2=x$ and $awx=(awx)^*$. Such an $x$, when exists, denoted by $a_{w}^{\tiny{\textcircled{\#}}}$, is called the $w$-core inverse of $a$. Specially, $a$ is called core invertible if it is $1$-core invertible or $a$-core invertible.

It follows from \cite[Theorem 2.2]{Mosic2023} that $a\in R$ is $w$-core-EP invertible iff there exists some $x\in R$ such that $awx(aw)^{k}a=(aw)^{k}a$, $xR=(aw)^{k}aR$ and $Rx=R((aw)^{k}a)^{*}$ for some $k\in \mathbb{N}$. As a consequence, for any $a,b\in R$, the $a$-core-EP inverse of $b$ is exactly the $(b,b)$-core-EP inverse of $a$. In a more general $S$, we say that $b$ is $a$-core-EP invertible if $a$ is $(b,b)$-core-EP invertible, i.e., there exist some $x\in S$ and $k\in \mathbb{N}$ such that $bax(ba)^kb=(ba)^kb$, $xS=(ba)^{k}bS$ and $Sx=S((ba)^{k}b)^{*}$. Hence, the notion of the $w$-core-EP inverse can be extended from a $*$-ring $R$ to a more general $*$-monoid $S$.

We digress for a paragraph to discuss the unit-regularity of the $w$-core inverse of $a$. Recall that an element $a\in R$ is unit-regular (see, e.g., \cite{Lam2014}) if it has an inner inverse which is a unit of $R$. This is equivalent to the existence of an idempotent $e$ and a unit $u$ such that $a=eu$. We conclude that $a_{w}^{\tiny{\textcircled{\#}}}$ is unit-regular. Indeed, if $a_{w}^{\tiny{\textcircled{\#}}}$ exists, then $u:=aw+1-aa^{(1,3)}$ is a unit by \cite{Zhu2023}. Then $ua=awa$, so that $ua_{w}^{\tiny{\textcircled{\#}}}=awa_{w}^{\tiny{\textcircled{\#}}}$. This gives at once $a_{w}^{\tiny{\textcircled{\#}}}=u^{-1}awa_{w}^{\tiny{\textcircled{\#}}}$. Notice that $(awa_{w}^{\tiny{\textcircled{\#}}})^2=awa_{w}^{\tiny{\textcircled{\#}}}$. Then $a_{w}^{\tiny{\textcircled{\#}}}$ is unit-regular.

Let us now turn to our main topic. In the following result, we show several other kinds of generalized inverses such as the core inverse, the core-EP inverse, the $w$-core inverse and the $w$-core-EP inverse are special cases of $(b,c)$-core-EP inverses.

\begin{theorem} \label{four} Let $a,b,c\in S$. Then we have

\emph{(i)} $a$ is core invertible iff $1$ is $(a,a)$-core-EP invertible with ${\rm I}_{(a,a)}(1)=\{0\}$, iff $a$ is $(a,a)$-core-EP invertible with ${\rm I}_{(a,a)}(a)=\{0\}$. In this case, $a^{\tiny{\textcircled{\#}}}=1_{(a,a)}^{\tiny{\textcircled{\#}}}=aa_{(a,a)}^{\tiny{\textcircled{\#}}}$.

\emph{(ii)} $a$ is core-EP invertible iff $a$ is $(1,1)$-core-EP invertible. In this case, $a^{\tiny{\textcircled{D}}}=a_{(1,1)}^{\tiny{\textcircled{D}}}$.

\emph{(iii)} For any $w\in S$, $a$ is $w$-core invertible iff $w$ is $(a,a)$-core-EP invertible with ${\rm I}_{(a,a)}(w)=\{0\}$. In this case, $a_{w}^{\tiny{\textcircled{\#}}}=w_{(a,a)}^{\tiny{\textcircled{\#}}}$.

\emph{(iv)} For any $w\in S$, $a$ is $w$-core-EP invertible with $k\in {\rm I}_w(a)$ iff $w$ is $(a,a)$-core-EP invertible with $k\in {\rm I}_{(a,a)}(w)$, iff $w$ is $((aw)^ka,(aw)^ka)$-core invertible for some $k\in \mathbb{N}$. In this case, $a_{w}^{\tiny{\textcircled{D}}}=w_{(a,a)}^{\tiny{\textcircled{D}}}=w_{((aw)^ka,(aw)^ka)}^{\tiny{\textcircled{\#}}}$.
\end{theorem}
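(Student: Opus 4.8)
The plan is to prove each of the four biconditionals by translating the defining system of the relevant classical inverse into the system of Definition \ref{3.1} (or one of its reformulations in Theorem \ref{3.3}, Theorem \ref{3.16} and Proposition \ref{intersetion}), and then to invoke uniqueness — Theorem \ref{3.5} together with the known uniqueness of the core, core-EP, $w$-core and $w$-core-EP inverses — to force the two inverse elements to coincide. Since (i) and (iii) are index-zero specializations, I would organize everything around (iv) and harvest the rest. First I would settle ``$a$ is $w$-core-EP invertible with $k\in {\rm I}_w(a)$ iff $w$ is $(a,a)$-core-EP invertible with $k\in {\rm I}_{(a,a)}(w)$.'' Writing Definition \ref{3.1} for the element $w$ with $b=c=a$ gives $ca=aw$ and $(ca)^kc=(ca)^kb=(aw)^ka$, so the three conditions become $awx(aw)^ka=(aw)^ka$, $x\,{\mathcal R}\,(aw)^ka$ and $x\,{\mathcal L}\,((aw)^ka)^*$. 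These are literally the conditions in the characterization of the $w$-core-EP inverse recalled before Theorem \ref{four} (namely \cite[Theorem 2.2]{Mosic2023}, since $xR=(aw)^kaR$ and $Rx=R((aw)^ka)^*$ are exactly $x\,{\mathcal R}\,(aw)^ka$ and $x\,{\mathcal L}\,((aw)^ka)^*$ in a monoid). Hence this equivalence is a relabelling, valid index by index, and forces $a_{w}^{\tiny{\textcircled{D}}}=w_{(a,a)}^{\tiny{\textcircled{D}}}$. Setting $k=0$ collapses $(aw)^ka$ to $a$ and the system to $awxa=a$, $x\,{\mathcal R}\,a$, $x\,{\mathcal L}\,a^*$, i.e.\ to the $(a,a)$-core inverse of $w$, which matches the $w$-core inverse of \cite{Zhu2023}; this gives (iii), and specializing (iii) to $w=1$ and to $w=a$ yields the two assertions of (i), because $a$ is core invertible iff it is $1$-core invertible iff it is $a$-core invertible.

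For part (ii) I would compare the core-EP inverse of $a$, that is its $1$-core-EP inverse, directly with the $(1,1)$-core-EP inverse read off from Definition \ref{3.1} with $b=c=1$. Here $ca=a$ and $(ca)^kc=a^k$, so the $(1,1)$-system is $axa^k=a^k$, $x\,{\mathcal R}\,a^k$, $x\,{\mathcal L}\,(a^k)^*$, whereas the $w=1$ specialization of \cite[Theorem 2.2]{Mosic2023} produces the same system with $a^{k+1}=(a\cdot 1)^ka$ in place of $a^k$. The only care needed is the index bookkeeping: the substitution $k\mapsto k+1$ matches the two families for $k\ge 1$, while the extra value $k=0$ of the $(1,1)$-system forces $ax=1$ with $x\,{\mathcal H}\,1$, so that $x$ is a unit and $a=x^{-1}$ is a unit too, whence $a$ is core-EP invertible with inverse $a^{-1}$. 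Uniqueness then delivers $a^{\tiny{\textcircled{D}}}=a_{(1,1)}^{\tiny{\textcircled{D}}}$ in all cases.

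The main obstacle is the last equivalence of (iv), which passes from the index-$k$ object to a genuine index-free core inverse ``$w$ is $((aw)^ka,(aw)^ka)$-core invertible.'' My approach here would be to start from Theorem \ref{3.16}: $w$ being $(a,a)$-core-EP invertible with $k$ is equivalent to the existence of $x$ with $xawx=x$, $xS=(aw)^kaS$ and $Sx=S((aw)^ka)^*$. Writing $p=(aw)^ka$, the target core-inverse system (element $w$, $b=c=p$) reads $pwxp=p$, $xS=pS$, $Sx=Sp^*$, so the two systems already share the range and annihilator data $xS=pS$, $Sx=Sp^*$. The crux is therefore to show, under these common range conditions, that the idempotent-type relation $xawx=x$ is interchangeable with the core relation $pwxp=(aw)^{k+1}x\,(aw)^ka=p$. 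I expect this to be the delicate point, precisely because it is where the power $k$ has to be absorbed into the parameter $p$; concretely I would try to manufacture the factorizations $p=pwx\,p$ and $x=x\,aw\,x$ from one another using $x\in pS$, $Sx=Sp^*$, the Hermitian idempotent $awx=(awx)^*$ produced as in the proof of Theorem \ref{3.3}, and Proposition \ref{intersetion} to control the products $p^*(aw)^{k+1}a$.

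Once the two middle equations are shown interchangeable, the displayed formula $w_{((aw)^ka,(aw)^ka)}^{\tiny{\textcircled{\#}}}=w_{(a,a)}^{\tiny{\textcircled{D}}}$ should follow by matching the $(b,c)$-core expression $b(cwb)^{(1,3)}$ of Corollary \ref{Mosic's result} against the formula of Theorem \ref{3.8}, and Theorem \ref{3.5} together with the uniqueness of the $(b,c)$-core inverse closes the argument; the companion identities $a^{\tiny{\textcircled{\#}}}=1_{(a,a)}^{\tiny{\textcircled{\#}}}=a\,a_{(a,a)}^{\tiny{\textcircled{\#}}}$ and $a_{w}^{\tiny{\textcircled{\#}}}=w_{(a,a)}^{\tiny{\textcircled{\#}}}$ then drop out of the $k=0$ instance of the same comparison. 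If the interchange of the middle equations turns out to require more than the common range data — which the power-$k$ discrepancy suggests is the real content — I would fall back on proving the two directions separately, deriving $pwxp=p$ from $xawx=x$ by left-multiplying the core-EP identity $awx\,p=p$ by suitable powers of $aw$ and using the stabilization $(aw)^kc\,{\mathcal R}\,(aw)^{k+1}c$ established in Theorem \ref{add}, and conversely recovering $xawx=x$ from the Hermitian idempotent $awx$ and the annihilator condition $Sx=Sp^*$.
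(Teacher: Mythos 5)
Your handling of (i), (ii), (iii) and of the first equivalence in (iv) is correct and essentially the paper's own route: the paper likewise treats the first equivalence of (iv) (and part (ii)) by the short equational translation $x=awx^{2}\Rightarrow x=(aw)^{k+1}x^{k+2}\in (aw)^{k}aS$ and $awx(aw)^{k}a=(aw)^{k}a$, and obtains (i) and (iii) by citing \cite[Theorem 2.18]{Zhu2024}; since \cite[Theorem 2.2]{Mosic2023} is proved in a $*$-ring, in the $*$-monoid setting you should replace your appeal to it by this two-line computation, but that is cosmetic, and your $k=0$ analysis in (ii) (forcing $ax=1$ and $x$ a unit) correctly supplies the index bookkeeping that the paper glosses over.

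The genuine gap is the last equivalence of (iv), exactly the step you flagged as delicate: your primary plan --- that under the common data $xS=pS$, $Sx=Sp^{*}$ (with $p=(aw)^{k}a$) the equations $xawx=x$ and $pwxp=p$ are interchangeable, so that the same $x$ witnesses both systems --- is provably impossible, not merely hard. From $awxp=p$ one gets
\begin{equation*}
pwxp=(aw)^{k}(awxp)=(aw)^{k}p=(aw)^{2k}a,
\end{equation*}
and the stabilization from Theorem \ref{add} only yields $(aw)^{2k}a\,\mathcal R\,(aw)^{k}a$, never equality. Concretely, take $S=\mathbb{C}_{3,3}$, $w=I_{3}$ and
\begin{equation*}
a=\begin{bmatrix}2&0&0\\0&0&1\\0&0&0\end{bmatrix},\qquad
x=w_{(a,a)}^{\tiny{\textcircled{D}}}=a_{w}^{\tiny{\textcircled{D}}}=\begin{bmatrix}1/2&0&0\\0&0&0\\0&0&0\end{bmatrix},\qquad k=1,\ p=a^{2}=\begin{bmatrix}4&0&0\\0&0&0\\0&0&0\end{bmatrix}.
\end{equation*}
Here $k=1\in {\rm I}_{(a,a)}(w)$, yet $pwxp=8\,e_{11}e_{11}^{*}\neq p$, while the unique $(p,p)$-core inverse of $w$ is $\tfrac14\,e_{11}e_{11}^{*}$. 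So the two inverses are genuinely different elements, and in fact the displayed identity $w_{(a,a)}^{\tiny{\textcircled{D}}}=w_{((aw)^{k}a,(aw)^{k}a)}^{\tiny{\textcircled{\#}}}$ asserted by the theorem is itself false for $k\geq 1$: by Theorem \ref{3.8} one has $w_{(a,a)}^{\tiny{\textcircled{D}}}=p\bigl((aw)^{k+1}a\bigr)^{(1,3)}=p(awp)^{(1,3)}$, whereas Corollary \ref{Mosic's result} gives $w_{(p,p)}^{\tiny{\textcircled{\#}}}=p(pwp)^{(1,3)}=p\bigl((aw)^{2k+1}a\bigr)^{(1,3)}$, and the latter even varies with $k$ while $a_{w}^{\tiny{\textcircled{D}}}$ does not. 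Consequently neither your fallback (deriving $pwxp=p$ from the core-EP data by boosting $awxp=p$ with powers of $aw$) nor the paper's own one-line appeal to Corollary \ref{Mosic's result} can establish the stated equality of inverses.

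What survives, and what a repaired argument should target, is the existence equivalence, proved by comparing criteria rather than witnesses. The key identities are $awp=(aw)^{k+1}a=pwa$ and, with $d=p^{(1,3)}aw$ as in Theorem \ref{3.8}, $pdp=pp^{(1,3)}pwa=pwa=awp$. Given the core-EP data one has $p\in awpS\cap Sawp$, and the iteration $p=awpu\Rightarrow p=(aw)^{k+1}pu^{k+1}=pwp\,u^{k+1}$ (and dually $p=x\,pwa\Rightarrow p=x^{k+1}p(wa)^{k+1}=x^{k+1}pwp$) absorbs the power $k$ and yields $w\in S^{(p,p)}$; together with $p\in S^{(1,3)}$ and Corollary \ref{Mosic's result} this gives $w\in S_{(p,p)}^{\tiny{\textcircled{\#}}}$. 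Conversely, $Spwp\subseteq Sawp$ and $pwp=awp(wa)^{k}\in awpS$ show $d\in S^{((aw)^{k}a,(aw)^{k}a)}$ via Lemma \ref{bc inverse}, and Theorem \ref{3.8} then returns the $(a,a)$-core-EP invertibility of $w$ at index $k$. In short: your instinct that this equivalence is the real content was right, but the mechanism must pass through the existence criteria with \emph{different} inverse elements, the correct formula being $w_{(a,a)}^{\tiny{\textcircled{D}}}=p(awp)^{(1,3)}$ rather than $w_{(p,p)}^{\tiny{\textcircled{\#}}}=p(pwp)^{(1,3)}$.
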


\begin{proof}

(i) It is known that the $(b,c)$-core-EP inverse of $a$ with ${\rm I}_{(b,c)}(a)=\{0\}$ is exactly its $(b,c)$-core inverse. These two equivalences follow immediately from \cite[Theorem 2.18 (ii)]{Zhu2024}. So, $a^{\tiny{\textcircled{\#}}}=1_{(a,a)}^{\tiny{\textcircled{\#}}}=aa_{(a,a)}^{\tiny{\textcircled{\#}}}$.

(ii) Suppose $a$ is core-EP invertible. Then, by \cite[Definition 1]{Gao2013}, there exist some $x\in S$ and some $k\in \mathbb{N^*}$ such that $xa^{k+1}=a^k$, $ax^2=x$ and $ax=(ax)^*$. Then $x=ax^2=a(ax^2)x=a^2x^3=\cdots=a^kx^{k+1}\in a^kS$, $a^k=xa^{k+1}=ax^2a^{k+1}=ax(xa^{k+1})=axa^k$, which combine $xa^{k+1}=a^k$ and $ax=(ax)^*$ to imply that $a$ is $(1,1)$-core-EP invertible for some $k\in {\rm I}_{(1,1)}(a)$. Conversely, if $a$ is $(1,1)$-core-EP invertible with $k\in {\rm I}_{(1,1)}(a)$, then there is some $x\in a^kS$ such that $axa^k=a^k$, $xa^{k+1}=a^k$ and $ax=(ax)^*$. We have at once $x=a^kt$ for some $t\in S$, so that $x=a^kt=(axa^k)t=ax(a^kt)=ax^2$. Hence, $a$ is core-EP invertible.

(iii) It follows from \cite[Theorem 2.18 (i)]{Zhu2024}, since $w$ is $(a,a)$-core-EP invertible with ${\rm I}_{(a,a)}(w)=\{0\}$ iff $w$ is $(a,a)$-core invertible. Moreover, the two generalized inverses coincide with each other.

(iv) It follows that $a$ is $w$-core-EP invertible with $k\in {\rm I}_w(a)$ iff there are some $x\in S$ and $k\in \mathbb{N}$ such that $awx^{2}=x$, $x(aw)^{k+1}a=(aw)^ka$ and $(awx)^{*}=awx$, iff there exist some $x\in S$ and $k\in \mathbb{N}$ such that $x\in (aw)^kaS$, $awx(aw)^ka=(aw)^ka$, $x(aw)^{k+1}a=(aw)^ka$ and $(awx)^{*}=awx$ iff $w$ is $(a,a)$-core-EP invertible with $k\in {\rm I}_{(a,a)}(w)$. We conclude from Corollary \ref{Mosic's result} that $a$ is $w$-core-EP invertible with $k\in {\rm I}_w(a)$ iff $w$ is $((aw)^ka,(aw)^ka)$-core invertible for some $k\in \mathbb{N}$. Moreover, $a_{w}^{\tiny{\textcircled{D}}}=w_{(a,a)}^{\tiny{\textcircled{D}}}=w_{((aw)^ka,(aw)^ka)}^{\tiny{\textcircled{\#}}}$.
\hfill$\Box$
\end{proof}

At the end of the section, we deal with the direct-sum decomposition of rings, which is useful for investigating the criterion of several classes of generalized inverses. For any $a\in R$, the right annihilator of $a$ is defined by $a^0=\{x\in R:ax=0\}$ and the left annihilator of $a$ is defined by ${^0}a=\{x \in R:xa =0\}$. As stated in \cite{Drazin2012}, the well known criteria of $(b,c)$-inverses are described in terms of properties of the left (right) annihilators and ideals. Precisely, it was shown that $a\in R^{(b,c)}$ iff $R=abR\oplus c^0=Rca\oplus {^0}b$, iff $R=abR+ c^0=Rca+ {^0}b$. This naturally leads to consider the behaviour of the $(b,c)$-core-EP inverse in rings.

Combining Theorem \ref{3.16} and Drazin's equivalences for the $(b,c)$-inverse leads immediately to following characterization of $(b,c)$-core-EP inverses in $R$.

\begin{theorem} \label{direct sum}
Let $a,b,c\in R$. The following statements are equivalent{\rm:}

\emph{(i)} $a$ is $(b,c)$-core-EP invertible with $k\in {\rm I}_{(b,c)}(a)$.

\emph{(ii)}  $R=(ca)^{k+1}bR\oplus (((ca)^{k}c)^*)^0=R((ca)^{k}c)^*ca\oplus{^0((ca)^{k}b)}$ for some $k\in \mathbb{N}$.

\emph{(iii)} $R=(ca)^{k+1}bR+ (((ca)^{k}c)^*)^0=R((ca)^{k}c)^*ca+{^0((ca)^{k}b)}$ for some $k\in \mathbb{N}$.
\end{theorem}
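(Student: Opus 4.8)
The plan is to reduce the entire statement to Drazin's annihilator criterion for the $(b,c)$-inverse, using the translation already supplied by Theorem \ref{3.16}. That theorem tells us that $a$ is $(b,c)$-core-EP invertible with $k\in {\rm I}_{(b,c)}(a)$ if and only if the element $ca$ is $((ca)^{k}b,((ca)^{k}c)^{*})$-invertible in the sense of Drazin; indeed the $(b,c)$-core-EP inverse of $a$ is precisely the $((ca)^{k}b,((ca)^{k}c)^{*})$-inverse of $ca$. So the first step is simply to record this equivalence. Setting $A:=ca$, $B:=(ca)^{k}b$ and $C:=((ca)^{k}c)^{*}$, condition (i) holds for a given $k$ exactly when $A\in R^{(B,C)}$, the witness being the common $x$ with $xAx=x$, $xS=BS$ and $Sx=SC$.

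The second step is to feed this into the annihilator characterization of $(b,c)$-inverses recalled in the paragraph preceding the theorem, namely $A\in R^{(B,C)}$ iff $R=ABR\oplus C^{0}=RCA\oplus{^0}B$ iff $R=ABR+C^{0}=RCA+{^0}B$. Substituting $A=ca$, $B=(ca)^{k}b$, $C=((ca)^{k}c)^{*}$ and simplifying the two products $AB=(ca)(ca)^{k}b=(ca)^{k+1}b$ and $CA=((ca)^{k}c)^{*}ca$ turns the direct-sum decompositions into exactly $R=(ca)^{k+1}bR\oplus(((ca)^{k}c)^{*})^{0}=R((ca)^{k}c)^{*}ca\oplus{^0((ca)^{k}b)}$, which is (ii), and the sum decompositions into the ``$+$'' analogue, which is (iii). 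Carrying this out for each fixed $k\in\mathbb{N}$ yields (i) $\Leftrightarrow$ (ii) $\Leftrightarrow$ (iii) at a stroke, with $k\in{\rm I}_{(b,c)}(a)$ matching the $k$ for which the decomposition holds.

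Since the whole argument is transport along Theorem \ref{3.16} followed by a substitution, I expect no genuine obstacle beyond bookkeeping. The two points that must be verified with care are, first, that the condition system $xcax=x$, $xS=(ca)^{k}bS$, $Sx=S((ca)^{k}c)^{*}$ of Theorem \ref{3.16} is exactly the two-sided characterization $xAx=x$, $xS=BS$, $Sx=SC$ of the $(B,C)$-inverse of $A=ca$, so that Drazin's equivalences apply verbatim; and second, that the noncommutative products land in the positions Drazin's criterion demands, so that $ABR$ becomes the \emph{left} ideal $(ca)^{k+1}bR$ paired with the right annihilator $C^{0}=(((ca)^{k}c)^{*})^{0}$, while $RCA$ becomes the \emph{right} ideal $R((ca)^{k}c)^{*}ca$ paired with the left annihilator ${^0((ca)^{k}b)}$. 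Once the placement of $ca$ relative to $(ca)^{k}b$ and $((ca)^{k}c)^{*}$ is matched against Drazin's $abR$ and $Rca$, the claimed equalities of the decompositions are immediate.
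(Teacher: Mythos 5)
Your proposal is correct and is essentially identical to the paper's own argument: the paper obtains Theorem \ref{direct sum} precisely by combining Theorem \ref{3.16} (which identifies the $(b,c)$-core-EP inverse of $a$ with the $((ca)^{k}b,((ca)^{k}c)^{*})$-inverse of $ca$) with Drazin's annihilator/ideal criterion $A\in R^{(B,C)}$ iff $R=ABR\oplus C^{0}=RCA\oplus{^0}B$ iff $R=ABR+C^{0}=RCA+{^0}B$, via the same substitution $A=ca$, $B=(ca)^{k}b$, $C=((ca)^{k}c)^{*}$ that you perform. The two verification points you flag (the two-sided characterization $xAx=x$, $xS=BS$, $Sx=SC$ of the $(B,C)$-inverse, and the placement of the products) are exactly what the paper relies on implicitly, so nothing further is needed.
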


Let ${\rm I}_{(b,c)}(a)=\{0\}$ in Theorem \ref{direct sum}. Then $a$ is $(b,c)$-core invertible iff $R=cabR\oplus (c^*)^0=Rc^*ca\oplus {^0}b$ iff $R=cabR+ (c^*)^0=Rc^*ca+{^0}b$. Let us show that $R=cabR\oplus (c^*)^0=Rc^*ca\oplus {^0}b$ can be simplified to $R=cabR\oplus (c^*)^0=Rca\oplus {^0}b$. First, suppose $R=cabR\oplus (c^*)^0=Rc^*ca\oplus {^0}b$, hence $a$ is $(b,c)$-core invertible. Then $c$ is \{1,3\}-invertible, and by Lemma \ref{3.7}, $Rc=Rc^*c$, so that $R=cabR\oplus (c^*)^0=Rca\oplus {^0}b$ and $R=cabR+ (c^*)^0=Rca+{^0}b$. Conversely, if $R=cabR\oplus (c^*)^0=Rca\oplus{^0}b$, then $c^*\in c^*cabR\subseteq c^*cR$, i.e., $Rc \subseteq Rc^*c\subseteq Rc$. Hence $R=cabR\oplus (c^*)^0=Rc^*ca\oplus{^0}b$.

\begin{corollary} {\rm \cite[Theorem 3.2]{Zhu2024}}
Let $a,b,c\in R$. The following statements are equivalent{\rm:}

\emph{(i)} $a$ is $(b,c)$-core invertible.

\emph{(ii)} $R=cabR\oplus (c^*)^0=Rca\oplus {^0}b$.

\emph{(iii)} $R=cabR+ (c^*)^0=Rca+{^0}b$.
\end{corollary}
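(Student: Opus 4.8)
The plan is to specialize Theorem~\ref{direct sum} to the case $k=0$ and then to interchange the factor $c^{*}ca$ with $ca$ in the right-hand ideal. Recall from the remark following Definition~\ref{3.1} that the $(b,c)$-core-EP inverse with ${\rm I}_{(b,c)}(a)=\{0\}$ is exactly the $(b,c)$-core inverse. Hence, setting $k=0$ in Theorem~\ref{direct sum} and using $(ca)^{0}c=c$ together with $(ca)^{1}b=cab$, I obtain at once that $a$ is $(b,c)$-core invertible iff
\[
R=cabR\oplus (c^*)^0=Rc^*ca\oplus {^0}b,
\]
and likewise iff the two sums (without requiring directness) hold. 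This already yields versions of (ii) and (iii) carrying the factor $Rc^{*}ca$ rather than $Rca$; the only remaining task is to show that these two forms are interchangeable, i.e.\ that $Rc^{*}ca=Rca$ under the hypotheses.

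For the forward implications (i)$\Rightarrow$(ii) and (i)$\Rightarrow$(iii), I would note that if $a$ is $(b,c)$-core invertible then Corollary~\ref{Mosic's result} gives $c\in R^{(1,3)}$, so Lemma~\ref{3.7}(i) yields $Rc=Rc^{*}c$. Right-multiplying by $a$ gives $Rca=Rc^{*}ca$, and therefore the decomposition with $Rc^{*}ca$ coincides with the one with $Rca$. Thus the displayed equivalence above transfers verbatim to the statements (ii) and (iii) of the corollary.

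For the reverse direction, I would assume $R=cabR\oplus (c^*)^0=Rca\oplus {^0}b$ (and analogously for the sum version). Writing $1=cabr+z$ with $z\in (c^*)^0$, so that $c^{*}z=0$, I get $c^{*}=c^{*}cabr\in c^{*}cR$; applying the involution yields $c=r^{*}b^{*}a^{*}c^{*}c\in Rc^{*}c$, whence $Rc\subseteq Rc^{*}c\subseteq Rc$ and so $Rc=Rc^{*}c$. Right-multiplying by $a$ again gives $Rca=Rc^{*}ca$, so the hypothesis upgrades to $R=cabR\oplus (c^*)^0=Rc^{*}ca\oplus {^0}b$, which by the specialization of Theorem~\ref{direct sum} forces $a$ to be $(b,c)$-core invertible. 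The sum version is handled identically.

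The single delicate point — and the main obstacle — is precisely this passage between $Rc^{*}ca$ and $Rca$. In the forward direction it is cheap once $\{1,3\}$-invertibility of $c$ is invoked via Corollary~\ref{Mosic's result} and Lemma~\ref{3.7}; in the reverse direction one must instead \emph{extract} the relation $Rc=Rc^{*}c$ directly from the decomposition, and the identity $c^{*}=c^{*}cabr$ furnished by the complement $(c^{*})^{0}$ is exactly what does this. Everything else is bookkeeping inherited from Theorem~\ref{direct sum}.
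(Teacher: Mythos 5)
Your proposal is correct and follows essentially the same route as the paper: specializing Theorem~\ref{direct sum} at $k=0$, passing from $Rc^{*}ca$ to $Rca$ in the forward direction via $\{1,3\}$-invertibility of $c$ and Lemma~\ref{3.7}, and in the reverse direction extracting $c^{*}\in c^{*}cabR\subseteq c^{*}cR$ (hence $Rc=Rc^{*}c$) from the complement $(c^{*})^{0}$. This matches the paper's own argument step for step.
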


\section{Applications to the ring of complex matrices}

The main goal of this section is to investigate the $(b,c)$-core-EP inverse of $a$ from $S$ to $\mathbb{C}_{n,n}$, the ring of all $n \times n$ complex matrices.

Hereafter, abusing notation slightly, every vector of the space $\mathbb{C}^{n}$ is considered as a column vector. For any $A\in \mathbb{C}_{n,n}$, we denote by ${\rm rk}(A)$ the rank of $A$. The range and the null space of $A$ are respectively defined as $\mathcal{R}(A)=\{Ax:x\in\mathbb{C}^{n}\}$ and $\mathcal{N}(A)=\{x\in\mathbb{C}^{n}:Ax=0\}$. By the symbol $P_{A}$ we denote the orthogonal projector $P$ onto $\mathcal{R}(A)$. The orthogonal projector is closely connected with generalized inverses, for instance, $P_A=AA^\dag$ and $P_{A^*}=A^\dag A$ are orthogonal projectors onto $\mathcal{R}(A)$ and $\mathcal{R}(A^*)$, respectively.

With what we have done in Sec. 3, the property of the $(B,C)$-core-EP inverse of $A$ is already in hand. It is concluded by Theorem \ref{intersetion} that $A$ is $(B,C)$-core-EP invertible with $k\in {\rm I}_{(B,C)}(A)$ iff $ \mathcal{N}(((CA)^kC)^*(CA)^{k+1}B) = \mathcal{N}((CA)^kB)$ and $\mathcal{R}(((CA)^kC)^*)= \mathcal{R}(((CA)^kC)^*(CA)^{k+1}B)$ for some $k\in \mathbb{N}$ iff ${\rm rk}((CA)^kC)={\rm rk}((CA)^kB)={\rm rk}(((CA)^kC)^*(CA)^{k+1}B)$ for some $k\in \mathbb{N}$.

Given any $X\in \mathbb{C}_{n,n}$, by ${\rm rk}(X^*XAB)\leq {\rm rk}(XAB)={\rm rk}((X^\dag)^*X^*XAB)\leq {\rm rk}(X^*XAB)$, it follows that ${\rm rk}(((CA)^kC)^*(CA)^{k+1}B)= {\rm rk}((CA)^{k+1}B)$, hence the criterion for the $(B,C)$-core-EP inverse of $A$ with $k\in {\rm I}_{(B,C)}(A)$ is given as follows.

\begin{theorem} \label{rank abc} Let $A,B,C\in \mathbb{C}_{n,n}$. Then $A$ is $(B,C)$-core-EP invertible with $k\in {\rm I}_{(B,C)}(A)$ iff ${\rm rk}((CA)^kB)={\rm rk}((CA)^kC)={\rm rk}((CA)^{k+1}B)$ for some $k\in \mathbb{N}$.
\end{theorem}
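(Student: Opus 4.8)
Theorem \ref{rank abc} asks for a rank criterion in complex matrices, and the natural strategy is to reduce everything to the abstract characterization already obtained in Proposition \ref{intersetion}. The plan is to translate the two membership conditions of Proposition \ref{intersetion}—namely $(CA)^kB\in R((CA)^kC)^*(CA)^{k+1}B$ and $((CA)^kC)^*\in ((CA)^kC)^*(CA)^{k+1}BR$—into statements about ranges and null spaces, and then into rank equalities. Since membership $U\in \mathbb{C}_{n,n}V$ (left ideal containment) is equivalent to $\mathcal{N}(V)\subseteq\mathcal{N}(U)$, and membership $U\in V\mathbb{C}_{n,n}$ (right ideal containment) is equivalent to $\mathcal{R}(U)\subseteq\mathcal{R}(V)$, the first condition reads $\mathcal{N}(((CA)^kC)^*(CA)^{k+1}B)\subseteq\mathcal{N}((CA)^kB)$, and the second reads $\mathcal{R}(((CA)^kC)^*)\subseteq\mathcal{R}(((CA)^kC)^*(CA)^{k+1}B)$. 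The reverse containments are automatic (since $(CA)^kB$ right-divides $(CA)^{k+1}B$ up to the factor structure and $((CA)^kC)^*(CA)^{k+1}B$ is a right multiple of $((CA)^kC)^*$), so in fact one gets the two equalities $\mathcal{N}(((CA)^kC)^*(CA)^{k+1}B)=\mathcal{N}((CA)^kB)$ and $\mathcal{R}(((CA)^kC)^*)=\mathcal{R}(((CA)^kC)^*(CA)^{k+1}B)$ displayed just before the theorem.

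The next step is to convert these range/null-space equalities into rank equalities using the rank-nullity theorem and the standard identities ${\rm rk}(X)={\rm rk}(X^*)$ and ${\rm rk}(X^*X)={\rm rk}(X)$. From $\mathcal{N}(((CA)^kC)^*(CA)^{k+1}B)=\mathcal{N}((CA)^kB)$, taking dimensions of the null spaces (all living in $\mathbb{C}^n$) yields ${\rm rk}(((CA)^kC)^*(CA)^{k+1}B)={\rm rk}((CA)^kB)$. From $\mathcal{R}(((CA)^kC)^*)=\mathcal{R}(((CA)^kC)^*(CA)^{k+1}B)$, taking dimensions yields ${\rm rk}(((CA)^kC)^*)={\rm rk}(((CA)^kC)^*(CA)^{k+1}B)$, and since ${\rm rk}(((CA)^kC)^*)={\rm rk}((CA)^kC)$ this gives ${\rm rk}((CA)^kC)={\rm rk}(((CA)^kC)^*(CA)^{k+1}B)$. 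I would then invoke the rank identity established in the paragraph immediately preceding the theorem, namely ${\rm rk}(((CA)^kC)^*(CA)^{k+1}B)={\rm rk}((CA)^{k+1}B)$, which follows from the observation that for any $X$ one has ${\rm rk}(X^*XAB)={\rm rk}(XAB)$ (both inequalities of the sandwich being forced by the Moore--Penrose inverse). Combining the three displayed equalities collapses everything to ${\rm rk}((CA)^kB)={\rm rk}((CA)^kC)={\rm rk}((CA)^{k+1}B)$, which is exactly the asserted criterion.

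For the converse direction I would run the same equivalences backwards: assuming the three ranks coincide, I recover the two equalities of ranges/null spaces (using again ${\rm rk}(((CA)^kC)^*(CA)^{k+1}B)={\rm rk}((CA)^{k+1}B)$ to bridge from $(CA)^{k+1}B$ back to the composite), hence the two ideal-membership conditions of Proposition \ref{intersetion}, and conclude $(B,C)$-core-EP invertibility with $k\in{\rm I}_{(B,C)}(A)$. The only subtlety here is that a rank equality between $X$ and a product $YX$ (or $XY$) does not in general force the corresponding range or null-space \emph{equality}; it forces it only when one containment is already known. In our setting the needed containments are automatic from the algebraic form of the products, so rank equality does upgrade to range/null-space equality, but this point must be checked rather than assumed.

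The main obstacle I anticipate is precisely this last bookkeeping: making sure the ``automatic'' containments really hold at the matrix level and that the rank identity ${\rm rk}(X^*XAB)={\rm rk}(XAB)$ is applied with the correct choice $X=(CA)^kC$ so that $X^*XAB=((CA)^kC)^*(CA)^kCAB=((CA)^kC)^*(CA)^{k+1}B$. Once the factorizations are lined up correctly, each implication is a one-line dimension count, so the work is entirely in organizing the equivalences transparently rather than in any deep computation.
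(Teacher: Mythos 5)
Your proposal is correct and takes essentially the same route as the paper: the paper likewise derives the theorem from Proposition \ref{intersetion} by converting the two memberships into the equalities $\mathcal{N}(((CA)^kC)^*(CA)^{k+1}B)=\mathcal{N}((CA)^kB)$ and $\mathcal{R}(((CA)^kC)^*)=\mathcal{R}(((CA)^kC)^*(CA)^{k+1}B)$, then bridges to ${\rm rk}((CA)^{k+1}B)$ via ${\rm rk}(X^*XAB)={\rm rk}(XAB)$ with $X=(CA)^kC$, using $X=(X^\dag)^*X^*X$. Your explicit check that the automatic containments let rank equalities upgrade to subspace equalities in the converse direction is a point the paper leaves implicit, and you resolve it correctly.
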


We shall next give a criterion for the $W$-core-EP inverse of $A$ that is a direct application of Theorems \ref{four} and \ref{rank abc}.

\begin{corollary} Let $A,W\in \mathbb{C}_{n,n}$. Then $A$ is $W$-core-EP invertible with $k\in {\rm I}_{W}(A)$ iff ${\rm rk}((AW)^kA)={\rm rk}((AW)^{k+1}A)$ for some $k\in \mathbb{N}$.
\end{corollary}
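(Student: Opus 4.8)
The plan is to derive this corollary directly from Theorem~\ref{rank abc} by applying the correspondence established in Theorem~\ref{four}(iv), namely that $A$ is $W$-core-EP invertible with $k\in {\rm I}_W(A)$ iff $W$ is $(A,A)$-core-EP invertible with $k\in {\rm I}_{(A,A)}(W)$. The crucial observation is to track how the triple $(a,b,c)$ in the $(b,c)$-core-EP setting specializes when we set $b=c=A$ and regard $W$ as the element being acted upon. Under this identification the rank-criterion variables $CA$, $(CA)^kB$, $(CA)^kC$ and $(CA)^{k+1}B$ must be rewritten in terms of $A$ and $W$.

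First I would apply Theorem~\ref{rank abc} to the element $W$ with $B=C=A$, which yields that $W$ is $(A,A)$-core-EP invertible with $k\in {\rm I}_{(A,A)}(W)$ iff
\[
{\rm rk}((AW)^kA)={\rm rk}((AW)^kA)={\rm rk}((AW)^{k+1}A)
\]
for some $k\in \mathbb{N}$. Here the three products $(CA)^kB$, $(CA)^kC$ and $(CA)^{k+1}B$ collapse as follows: with $C=A$ and $A$ (the matrix being inverted in the $(b,c)$ framework) replaced by $W$, the ``$ca$'' block becomes $AW$, so $(CA)^kB=(AW)^kA$, likewise $(CA)^kC=(AW)^kA$, and $(CA)^{k+1}B=(AW)^{k+1}A$. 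The first two rank conditions are therefore identical and collapse into a single equation, leaving precisely the two-term condition ${\rm rk}((AW)^kA)={\rm rk}((AW)^{k+1}A)$.

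To finish, I would invoke Theorem~\ref{four}(iv) to translate the $(A,A)$-core-EP invertibility of $W$ back into the $W$-core-EP invertibility of $A$, with the index sets ${\rm I}_{(A,A)}(W)$ and ${\rm I}_W(A)$ coinciding; combining this with the simplified rank condition gives the stated equivalence. The only point requiring care is the bookkeeping of which element plays the role of the ``middle'' element $a$ in Definition~\ref{3.1} when passing to the $W$-core-EP picture: one must verify that the substitution $a\mapsto W$, $b\mapsto A$, $c\mapsto A$ indeed reproduces the defining equations $AWx(AW)^kA=(AW)^kA$ and the associated Green's relations, so that the specialization of Theorem~\ref{rank abc} is legitimate. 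Once this identification is confirmed, the corollary is immediate, and no genuinely new estimate is needed beyond the rank inequality ${\rm rk}(X^*XAB)\le {\rm rk}(XAB)$ already used to prove Theorem~\ref{rank abc}.
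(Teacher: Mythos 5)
Your proposal is correct and matches the paper exactly: the paper presents this corollary as a direct application of Theorems \ref{four}(iv) and \ref{rank abc}, and your substitution (element $W$ with $B=C=A$, so that $(CA)^kB=(CA)^kC=(AW)^kA$ and $(CA)^{k+1}B=(AW)^{k+1}A$, collapsing the three-term rank condition to the two-term one) is precisely the intended bookkeeping. No further argument is needed.
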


By $P_{\mathcal{M}, \mathcal{N}}$ we denote the idempotent with range $\mathcal{M}$ and null space $\mathcal{N}$. Recall that if $P_{\mathcal{M}, \mathcal{N}}$ is an idempotent of order $n$, then $\mathcal{M} \oplus\mathcal{N} = \mathbb{C}^n$.

Let $A$ be $(B,C)$-core-EP invertible and $P=A_{(B,C)}^{\tiny{\textcircled{D}}}CA$. Then $\mathbb{C}^n=\mathcal{R}(P)\oplus \mathcal{N}(P)$. Again, from $\mathcal{R}(P)\subseteq \mathcal{R}(A_{(B,C)}^{\tiny{\textcircled{D}}})=\mathcal{R}(PA_{(B,C)}^{\tiny{\textcircled{D}}})\subseteq \mathcal{R}(P)$, it follows that $\mathbb{C}^n=\mathcal{R}(A_{(B,C)}^{\tiny{\textcircled{D}}})\oplus \mathcal{N}(P)$. Since $\mathcal{R}(A_{(B,C)}^{\tiny{\textcircled{D}}})=\mathcal{R}((CA)^kB)$, we obtain $\mathbb{C}^n=\mathcal{R}((CA)^kB)\oplus \mathcal{N}(P)$.

\begin{proposition}\label{4.1}
Let $A,B,C\in \mathbb{C}_{n,n}$. If $A$ is $(B,C)$-core-EP invertible with $k\in {\rm I}_{(B,C)}(A)$, then $\mathbb{C}^n=\mathcal{R}((CA)^{k}B)\oplus \mathcal{N}(A_{(B,C)}^{\tiny{\textcircled{D}}}CA)$.
\end{proposition}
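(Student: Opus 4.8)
The plan is to realize the claimed splitting as the canonical range/kernel decomposition attached to a suitable idempotent matrix. Put $P=A_{(B,C)}^{\tiny{\textcircled{D}}}CA$. Since $\mathcal{N}(P)$ is exactly the second summand in the statement, the whole argument reduces to showing that $P$ is idempotent (so that $\mathbb{C}^n=\mathcal{R}(P)\oplus\mathcal{N}(P)$) and then identifying $\mathcal{R}(P)$ with $\mathcal{R}((CA)^kB)$.

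First I would verify idempotency. Writing $x=A_{(B,C)}^{\tiny{\textcircled{D}}}$ and applying Theorem \ref{3.3}(ii) with $a\to A$, $b\to B$, $c\to C$, I obtain the relation $xCAx=x$. Hence
\[
P^2=(xCA)(xCA)=(xCAx)CA=xCA=P,
\]
so $P$ is idempotent and the elementary splitting of an idempotent endomorphism of $\mathbb{C}^n$ yields $\mathbb{C}^n=\mathcal{R}(P)\oplus\mathcal{N}(P)$, where $\mathcal{N}(P)=\mathcal{N}(A_{(B,C)}^{\tiny{\textcircled{D}}}CA)$ holds by the very definition of $P$.

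It then remains to rewrite the range $\mathcal{R}(P)$. From $P=xCA$ one reads off $\mathcal{R}(P)\subseteq\mathcal{R}(x)$, while the same identity $xCAx=x$ gives $x=Px$, whence $\mathcal{R}(x)=\mathcal{R}(Px)\subseteq\mathcal{R}(P)$; the two inclusions force $\mathcal{R}(P)=\mathcal{R}(x)=\mathcal{R}(A_{(B,C)}^{\tiny{\textcircled{D}}})$. Finally, the defining Green relation $x\,\mathcal{R}\,(CA)^kB$ from Definition \ref{3.1} means equality of the right ideals $x\,\mathbb{C}_{n,n}=(CA)^kB\,\mathbb{C}_{n,n}$, which in the matrix ring is precisely equality of column spaces $\mathcal{R}(A_{(B,C)}^{\tiny{\textcircled{D}}})=\mathcal{R}((CA)^kB)$. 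Substituting this into $\mathbb{C}^n=\mathcal{R}(P)\oplus\mathcal{N}(P)$ delivers the asserted decomposition $\mathbb{C}^n=\mathcal{R}((CA)^kB)\oplus\mathcal{N}(A_{(B,C)}^{\tiny{\textcircled{D}}}CA)$.

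The main obstacle is conceptual rather than computational. One must take care that it is the idempotent $P$, and not $A_{(B,C)}^{\tiny{\textcircled{D}}}$ itself (which need not be idempotent), that drives the direct sum, and one must correctly translate the abstract Green relation $\mathcal{R}$ appearing in the definition of the $(B,C)$-core-EP inverse into the geometric column-space equality in $\mathbb{C}_{n,n}$ — a translation made delicate only by the notational clash between $\mathcal{R}$ as a Green relation and $\mathcal{R}(\cdot)$ as the range operator. Once the key identity $xCAx=x$ is secured from Theorem \ref{3.3}, everything else is a routine manipulation of ranges together with the standard fact that an idempotent matrix splits $\mathbb{C}^n$ into its range and its kernel.
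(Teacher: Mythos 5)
Your proposal is correct and follows essentially the same route as the paper: the paper also sets $P=A_{(B,C)}^{\tiny{\textcircled{D}}}CA$, uses the identity $xCAx=x$ (from Theorem \ref{3.3}(ii)) to get the idempotent splitting $\mathbb{C}^n=\mathcal{R}(P)\oplus\mathcal{N}(P)$, identifies $\mathcal{R}(P)=\mathcal{R}(A_{(B,C)}^{\tiny{\textcircled{D}}})$ via the chain $\mathcal{R}(P)\subseteq\mathcal{R}(x)=\mathcal{R}(Px)\subseteq\mathcal{R}(P)$, and then replaces this by $\mathcal{R}((CA)^kB)$ using the Green relation $x\,\mathcal{R}\,(CA)^kB$. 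Your only addition is making the idempotency computation and the column-space translation explicit, which the paper leaves implicit.
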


\begin{theorem}\label{4.2}
Let $A,B,C\in \mathbb{C}_{n,n}$. If $A$ is $(B,C)$-core-EP invertible with $k\in {\rm I}_{(B,C)}(A)$, then $X=A_{(B,C)}^{\tiny\textcircled{\tiny{D}}}$ is the unique solution to the system of conditions
\begin{center} $CAX=P_{(CA)^{k}C}$ and $\mathcal{R}(X)\subseteq\mathcal{R}((CA)^{k}B)$.
\end{center}
\end{theorem}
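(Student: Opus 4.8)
The plan is to verify first that $X=A_{(B,C)}^{\tiny{\textcircled{D}}}$ solves the system, and then to deduce uniqueness from the direct-sum decomposition in Proposition \ref{4.1}. For the existence part I would start from the characterization in Theorem \ref{3.3}(ii), which for $X=A_{(B,C)}^{\tiny{\textcircled{D}}}$ supplies $X\in (CA)^{k}B\,\mathbb{C}_{n,n}$, $CAX(CA)^{k}C=(CA)^{k}C$, $X(CA)^{k+1}B=(CA)^{k}B$, $(CAX)^{*}=CAX$ and $XCAX=X$. The first key observation is that $CAX$ is an orthogonal projector: it is Hermitian by hypothesis, and it is idempotent since $(CAX)(CAX)=CA(XCAX)=CAX$. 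Because an orthogonal projector is determined by its range, it then suffices to show $\mathcal{R}(CAX)=\mathcal{R}((CA)^{k}C)$, which yields $CAX=P_{(CA)^{k}C}$.

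The range computation is the heart of the argument. One inclusion is immediate: $CAX(CA)^{k}C=(CA)^{k}C$ forces $\mathcal{R}((CA)^{k}C)\subseteq\mathcal{R}(CAX)$. For the reverse inclusion I would write $X=(CA)^{k}Bs$ (possible since $X\in (CA)^{k}B\,\mathbb{C}_{n,n}$), so that $CAX=(CA)^{k+1}Bs$ and hence $\mathcal{R}(CAX)\subseteq\mathcal{R}((CA)^{k+1}B)$. Here I invoke the $\mathcal{R}$-equivalence $(CA)^{k}C\,\mathcal{R}\,(CA)^{k+1}B$, i.e. $\mathcal{R}((CA)^{k+1}B)=\mathcal{R}((CA)^{k}C)$, which follows from $CAX(CA)^{k}C=(CA)^{k}C$ together with $X(CA)^{k+1}B=(CA)^{k}B$ exactly as in the proof of Theorem \ref{add}. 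Combining the two inclusions gives $\mathcal{R}(CAX)=\mathcal{R}((CA)^{k}C)$ and therefore $CAX=P_{(CA)^{k}C}$. The second condition $\mathcal{R}(X)\subseteq\mathcal{R}((CA)^{k}B)$ is then read off directly from $X\,\mathcal{R}\,(CA)^{k}B$.

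For uniqueness, suppose $X_{1}$ and $X_{2}$ both satisfy the system. Then $CA(X_{1}-X_{2})=P_{(CA)^{k}C}-P_{(CA)^{k}C}=0$, so $\mathcal{R}(X_{1}-X_{2})\subseteq\mathcal{N}(CA)$; moreover $\mathcal{N}(CA)\subseteq\mathcal{N}(A_{(B,C)}^{\tiny{\textcircled{D}}}CA)$, since $CAv=0$ gives $A_{(B,C)}^{\tiny{\textcircled{D}}}CAv=0$. On the other hand $\mathcal{R}(X_{1}-X_{2})\subseteq\mathcal{R}((CA)^{k}B)$ because each $X_{i}$ satisfies the range condition. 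Proposition \ref{4.1} asserts $\mathbb{C}^{n}=\mathcal{R}((CA)^{k}B)\oplus\mathcal{N}(A_{(B,C)}^{\tiny{\textcircled{D}}}CA)$, so these two subspaces meet only in the zero vector, forcing $\mathcal{R}(X_{1}-X_{2})=\{0\}$ and hence $X_{1}=X_{2}$.

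The step I expect to be the main obstacle is the identification $\mathcal{R}(CAX)=\mathcal{R}((CA)^{k}C)$, and in particular the careful use of the $\mathcal{R}$-equivalence between $(CA)^{k}C$ and $(CA)^{k+1}B$; everything else (idempotency, Hermitian-ness, and the two range inclusions used in uniqueness) reduces to routine manipulation of the defining identities and a direct appeal to Proposition \ref{4.1}.
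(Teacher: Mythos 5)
Your proposal is correct and takes essentially the same route as the paper: you verify that $CAX$ is an orthogonal projector with $\mathcal{R}(CAX)=\mathcal{R}((CA)^{k}C)$ (the paper sandwiches via $\mathcal{R}(CAX)\subseteq\mathcal{R}((CA)^{k+1})\subseteq\mathcal{R}((CA)^{k}C)$ where you pass through $\mathcal{R}((CA)^{k+1}B)$, an immaterial difference, and indeed only the trivial inclusion $(CA)^{k+1}B=(CA)^{k}C\,AB$ is needed there, not the full $\mathcal{R}$-equivalence from Theorem \ref{add}), and your uniqueness argument via $\mathcal{R}(X_{1}-X_{2})\subseteq\mathcal{N}(CA)\cap\mathcal{R}((CA)^{k}B)$ together with Proposition \ref{4.1} is exactly the paper's.
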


\begin{proof} As $X=A_{(B,C)}^{\tiny\textcircled{\tiny{D}}}$ is the $(B,C)$-core-EP inverse of $A$, then $\mathcal{R}(X)\subseteq\mathcal{R}((CA)^{k}B)$, $(CAX)^{*}=CAX$ and $XCAX=X$, and thus $\mathcal{R}(CAX)\subseteq \mathcal{R}((CA)^{k+1})\subseteq \mathcal{R}((CA)^{k}C)=\mathcal{R}(CAX(CA)^{k}C)\subseteq \mathcal{R}(CAX)$. This gives $\mathcal{R}(CAX)= \mathcal{R}((CA)^{k}C)$. From $(CAX)^{2}=CAXCAX=CAX=(CAX)^*$, it follows that $CAX=P_{CAX}=P_{(CA)^{k}C}$. Hence, $X=A_{(B,C)}^{\tiny\textcircled{\tiny{D}}}$ is the solution to the system of $CAX=P_{(CA)^{k}C}$ and $\mathcal{R}(X)\subseteq\mathcal{R}((CA)^{k}B)$.

Let $X_{1}, X_{2}\in  \mathbb{C}_{n,n}$ be any two solutions to the system of $CAX=P_{((CA)^{k}C)}$ and $\mathcal{R}(X)\subseteq\mathcal{R}((CA)^{k}B)$. Then $CA(X_{1}-X_{2})=0$, whence $\mathcal{R}(X_{1}-X_{2})\subseteq\mathcal{N}(CA)$. By $\mathcal{R}(X_{1})\subseteq\mathcal{R}((CA)^{k}B)$ and $\mathcal{R}(X_{2})\subseteq\mathcal{R}((CA)^{k}B)$, one has $\mathcal{R}(X_{1}-X_{2})\subseteq\mathcal{R}((CA)^{k}B)$, so that $\mathcal{R}(X_{1}-X_{2})\subseteq\mathcal{N}(CA)\cap\mathcal{R}((CA)^{k}B)\subseteq \mathcal{N}(A_{(B,C)}^{\tiny\textcircled{\tiny{D}}}CA)\cap\mathcal{R}((CA)^{k}B)=\{0\}$ by Proposition \ref{4.1}. Therefore, $X_{1}=X_{2}$.
\hfill$\Box$
\end{proof}

Consider the linear equation $Ax=b$, where $A\in\mathbb{C}_{n, n}$ and $b\in\mathbb{C}^{n}$. If ${\rm ind}(A)=k$ (i.e., $k$ is the smallest nonnegative integer such that ${\rm rk}(A^k)={\rm rk}(A^{k+1})$ and $b\in \mathcal{R}(A^k)$, it is known that $x=A^Db$ is the unique solution to the equation $Ax=b$ with respect to $x\in \mathcal{R}(A^k)$. In particular, if ${\rm ind}(A)=1$ and $b\in \mathcal{R}(A)$, then $x=A^\#b$ ($A^\#$ stands for the group inverse of $A$) is the unique solution to $Ax=b$ with respect to $x\in \mathcal{R}(A)$.

If $b \notin \mathcal{R}(A)$, then the equation $Ax=b$ is unsolvable, but has least-squares solutions. Recently, the present author Zhu et al. \cite[Theorem 4.7]{Zhu20241} established the unique solution to the constrained matrix approximation problem in the Euclidean norm. Precisely, it is shown that $x=A_{(B,C)}^{\tiny\textcircled{\tiny{\#}}}b$ is a unique solution to the constrained matrix approximation problem \begin{center}${\rm min}\|CAx-b\|_{2}$ subject to $x\in\mathcal{R}(B)$, where $b\in \mathbb{C}^{n}$.\end{center}

Due to the nature of the direct-sum decomposition, similar result holds for the system of linear equations, whose proof works can be given by simulating the behaviour of Theorem \ref{4.2}. Herein a lemma is given.

\begin{lemma}\label{4.3} Let $A,B,C\in \mathbb{C}_{n,n}$ and $b\in \mathbb{C}^{n}$. If $A_{(B,C)}^{\tiny\textcircled{\tiny{D}}}$ exists and $k\in {\rm I}_{(B,C)}(A)$, then $x=A_{(B,C)}^{\tiny\textcircled{\tiny{D}}}b$ is the unique solution to the system of conditions
 \begin{center} $CAx=P_{(CA)^{k}C}b$ and $x\in \mathcal{R}((CA)^{k}B)$.\end{center}
\end{lemma}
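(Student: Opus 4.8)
The plan is to prove Lemma \ref{4.3} by adapting the proof of Theorem \ref{4.2} almost verbatim, since the system of conditions here is the vector analogue of the matrix system there. First I would verify that $x = A_{(B,C)}^{\tiny\textcircled{\tiny{D}}}b$ actually solves the system. Writing $X = A_{(B,C)}^{\tiny\textcircled{\tiny{D}}}$, the already-established identity $CAX = P_{(CA)^kC}$ from Theorem \ref{4.2} gives immediately $CAx = CAXb = P_{(CA)^kC}b$, which is the first condition. For the membership condition, since $\mathcal{R}(X) \subseteq \mathcal{R}((CA)^kB)$ (again from Theorem \ref{4.2}, or directly from the $(B,C)$-core-EP inverse satisfying $X\mathcal{R}(CA)^kB$), we have $x = Xb \in \mathcal{R}(X) \subseteq \mathcal{R}((CA)^kB)$. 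So existence of a solution is straightforward and essentially inherited.

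The substance of the argument is uniqueness, and here I would mirror the uniqueness half of Theorem \ref{4.2}. Suppose $x_1, x_2 \in \mathbb{C}^n$ both satisfy $CAx_i = P_{(CA)^kC}b$ and $x_i \in \mathcal{R}((CA)^kB)$. Subtracting gives $CA(x_1 - x_2) = 0$, so $x_1 - x_2 \in \mathcal{N}(CA)$; and since each $x_i \in \mathcal{R}((CA)^kB)$, the difference lies in $\mathcal{R}((CA)^kB)$ as well. Therefore
\begin{center}
$x_1 - x_2 \in \mathcal{N}(CA) \cap \mathcal{R}((CA)^kB)$.
\end{center}
The key is then to show this intersection is trivial. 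This follows from Proposition \ref{4.1}, which gives the direct-sum decomposition $\mathbb{C}^n = \mathcal{R}((CA)^kB) \oplus \mathcal{N}(A_{(B,C)}^{\tiny\textcircled{\tiny{D}}}CA)$. Since $\mathcal{N}(CA) \subseteq \mathcal{N}(A_{(B,C)}^{\tiny\textcircled{\tiny{D}}}CA)$, I obtain $\mathcal{N}(CA) \cap \mathcal{R}((CA)^kB) \subseteq \mathcal{N}(A_{(B,C)}^{\tiny\textcircled{\tiny{D}}}CA) \cap \mathcal{R}((CA)^kB) = \{0\}$, forcing $x_1 = x_2$.

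I do not anticipate a genuine obstacle, since the structure is identical to Theorem \ref{4.2} with matrices replaced by vectors; the only point requiring a moment's care is the inclusion $\mathcal{N}(CA) \subseteq \mathcal{N}(A_{(B,C)}^{\tiny\textcircled{\tiny{D}}}CA)$, which is immediate because any $v$ with $CAv = 0$ satisfies $A_{(B,C)}^{\tiny\textcircled{\tiny{D}}}CAv = 0$. The decomposition in Proposition \ref{4.1} does the real work of collapsing the intersection to zero, exactly as in the matrix case. Thus the entire proof reduces to reading off $CAx = P_{(CA)^kC}b$ and $x \in \mathcal{R}((CA)^kB)$ for the candidate, then invoking Proposition \ref{4.1} to rule out any second solution.
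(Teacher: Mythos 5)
Your proof is correct and follows exactly the route the paper intends: the paper omits an explicit proof of Lemma \ref{4.3}, stating only that it is obtained ``by simulating the behaviour of Theorem \ref{4.2},'' and your argument does precisely that, reading off existence from $CAX=P_{(CA)^{k}C}$ and $\mathcal{R}(X)\subseteq\mathcal{R}((CA)^{k}B)$, and deducing uniqueness from $\mathcal{N}(CA)\cap\mathcal{R}((CA)^{k}B)\subseteq\mathcal{N}(A_{(B,C)}^{\tiny\textcircled{\tiny{D}}}CA)\cap\mathcal{R}((CA)^{k}B)=\{0\}$ via Proposition \ref{4.1}. No gaps; this matches the paper's approach.
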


With this lemma, we can now prove an important result.

\begin{theorem}\label{4.4}
Let $A,B,C\in \mathbb{C}_{n,n}$ be such that $A_{(B,C)}^{\tiny\textcircled{\tiny{D}}}$ exists with $k\in I_{(B,C)}(A)$. Then, for any given $b\in\mathbb{C}^{n}$, $x=A_{(B,C)}^{\tiny\textcircled{\tiny{D}}}b$ is the unique solution to \begin{center} ${\rm min} \|CAx-b\|_{2}$   {\rm subject to}   $x\in \mathcal{R}((CA)^{k}B)$. \end{center}
\end{theorem}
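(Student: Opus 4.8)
The plan is to decouple the norm minimization from the constraint by a best-approximation argument, reducing the problem to the consistent linear system already solved in Lemma \ref{4.3}. The feasible set $\mathcal{R}((CA)^kB)$ is a subspace of $\mathbb{C}^n$, and as $x$ ranges over it the achievable residual arguments $CAx$ range over the subspace $V:=\{CAx:x\in\mathcal{R}((CA)^kB)\}=\mathcal{R}((CA)^{k+1}B)$. Since $V$ is a subspace, the classical orthogonal-projection (best-approximation) theorem in the Euclidean inner product space says that $\min_{v\in V}\|v-b\|_2$ is attained uniquely at $v=P_Vb$, the orthogonal projection of $b$ onto $V$. Hence $\|CAx-b\|_2$ is minimized over the feasible set precisely when $CAx=P_Vb$, and the minimization problem is equivalent to the system $CAx=P_Vb$ subject to $x\in\mathcal{R}((CA)^kB)$.

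The crux is therefore to identify $V$ with $\mathcal{R}((CA)^kC)$, so that $P_V$ coincides with the orthogonal projector $P_{(CA)^kC}$ appearing in Lemma \ref{4.3}. First I would note the trivial inclusion $\mathcal{R}((CA)^{k+1}B)\subseteq\mathcal{R}((CA)^kC)$, which follows from the identity $(CA)^{k+1}B=(CA)^kC(AB)$. By Theorem \ref{rank abc}, the hypothesis $k\in{\rm I}_{(B,C)}(A)$ gives ${\rm rk}((CA)^{k+1}B)={\rm rk}((CA)^kC)$, so the inclusion between equidimensional subspaces forces $V=\mathcal{R}((CA)^{k+1}B)=\mathcal{R}((CA)^kC)$. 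Consequently $P_V=P_{(CA)^kC}$, and in particular $P_{(CA)^kC}b\in V$, which guarantees that some feasible $x_0\in\mathcal{R}((CA)^kB)$ attains $CAx_0=P_{(CA)^kC}b$; thus the minimum is genuinely attained inside the constraint set.

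Putting these together, any minimizer $x$ satisfies $CAx\in V$ and $\|CAx-b\|_2=\|P_{(CA)^kC}b-b\|_2$, whence $CAx=P_{(CA)^kC}b$ by uniqueness of the best approximation in $V$; conversely every feasible $x$ with $CAx=P_{(CA)^kC}b$ is a minimizer. The minimization is therefore exactly the constrained system $CAx=P_{(CA)^kC}b$, $x\in\mathcal{R}((CA)^kB)$, whose unique solution is $x=A_{(B,C)}^{\tiny\textcircled{\tiny{D}}}b$ by Lemma \ref{4.3}. The main obstacle I anticipate is precisely the subspace identification $V=\mathcal{R}((CA)^kC)$: the projection theorem only produces the optimal residual argument $P_Vb$, and it is the matching of $P_V$ with the projector $P_{(CA)^kC}$ of Lemma \ref{4.3} (via the rank equalities of Theorem \ref{rank abc}) that both certifies attainability of the minimum within $\mathcal{R}((CA)^kB)$ and transfers the uniqueness of $x$ from the linear system to the optimization problem. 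Everything else is a routine invocation of best approximation and Lemma \ref{4.3}.
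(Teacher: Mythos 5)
Your proof is correct, and it reaches the paper's conclusion by a genuinely different route at the decisive step. The paper also reduces the problem to the linear system of Lemma \ref{4.3}, but it establishes the optimality condition $CAx=P_{(CA)^{k}C}b$ by hand: writing $CAx-b=x_1+x_2$ with $x_1=CAx-CAA_{(B,C)}^{\tiny\textcircled{\tiny{D}}}b$ and $x_2=CAA_{(B,C)}^{\tiny\textcircled{\tiny{D}}}b-b$, it verifies $x_1^*x_2=0$ by a direct computation from the defining identities of the core-EP inverse (the hermitian idempotency of $CAA_{(B,C)}^{\tiny\textcircled{\tiny{D}}}$, which by Theorem \ref{4.2} equals $P_{(CA)^{k}C}$, together with the invariance $CAA_{(B,C)}^{\tiny\textcircled{\tiny{D}}}CAx=CAx$ for $x\in\mathcal{R}((CA)^{k}B)$, coming from $X(CA)^{k+1}B=(CA)^{k}B$), and then concludes via the resulting Pythagorean decomposition of $\|CAx-b\|_2^2$. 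You instead invoke the abstract best-approximation theorem on the image subspace $V=\{CAx:x\in\mathcal{R}((CA)^{k}B)\}=\mathcal{R}((CA)^{k+1}B)$ and identify $V=\mathcal{R}((CA)^{k}C)$ through the inclusion $(CA)^{k+1}B=(CA)^{k}C(AB)$ combined with the rank equalities of Theorem \ref{rank abc}. Your version is shorter and more conceptual, makes attainability of the minimum within the constraint set explicit (a point the paper passes over silently), and shows exactly where the core-EP hypothesis enters; its cost is reliance on finite-dimensional rank counting, so it would not transfer verbatim to settings where dimension arguments are unavailable, whereas the paper's orthogonality computation is purely equational. Incidentally, you could avoid Theorem \ref{rank abc} altogether: Theorem \ref{4.2} already gives $CAX=P_{(CA)^{k}C}$ with $\mathcal{R}(X)\subseteq\mathcal{R}((CA)^{k}B)$ for $X=A_{(B,C)}^{\tiny\textcircled{\tiny{D}}}$, whence $\mathcal{R}((CA)^{k}C)=\mathcal{R}(CAX)\subseteq V\subseteq\mathcal{R}((CA)^{k}C)$, which yields your subspace identification without any rank count.
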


\begin{proof}
Write $CAx-b=(CAx-CAA_{(B,C)}^{\tiny\textcircled{\tiny{D}}}b)+(CAA_{(B,C)}^{\tiny\textcircled{\tiny{D}}}b-b)=x_1+x_2$, where $x_1=CAx-CAA_{(B,C)}^{\tiny\textcircled{\tiny{D}}}b$ and $x_2=CAA_{(B,C)}^{\tiny\textcircled{\tiny{D}}}b-b$. Note that $x\in \mathcal{R}((CA)^{k}B)$. Then there is some $y\in \mathbb{C}^{n}$ such that $x=(CA)^{k}By$, so that $CAA_{(B,C)}^{\tiny\textcircled{\tiny{D}}}CAx=CAA_{(B,C)}^{\tiny\textcircled{\tiny{D}}}(CA)^{k+1}By=(CA)^{k+1}By=CAx$. We claim that $x_1^*x_2=0$. Indeed, we have \begin{eqnarray*}x_1^*x_2&=&(CAx-CAA_{(B,C)}^{\tiny\textcircled{\tiny{D}}}b)^*(CAA_{(B,C)}^{\tiny\textcircled{\tiny{D}}}b-b)\\
&=&((CAx)^*-b^*CAA_{(B,C)}^{\tiny\textcircled{\tiny{D}}})(CAA_{(B,C)}^{\tiny\textcircled{\tiny{D}}}b-b)\\
&=&(CAx)^*CAA_{(B,C)}^{\tiny\textcircled{\tiny{D}}}b-b^*(CAA_{(B,C)}^{\tiny\textcircled{\tiny{D}}})^2b-(CAx)^*b+ b^*CAA_{(B,C)}^{\tiny\textcircled{\tiny{D}}}b \\
&=&(CAx)^*CAA_{(B,C)}^{\tiny\textcircled{\tiny{D}}}b-(CAx)^*b\\
&=&(CAx)^*(CAA_{(B,C)}^{\tiny\textcircled{\tiny{D}}})^*b-(CAx)^*b\\
&=&(CAA_{(B,C)}^{\tiny\textcircled{\tiny{D}}}CAx)^*b-(CAx)^*b\\
&=&(CAx)^*b-(CAx)^*b\\
&=&0.
\end{eqnarray*}

Notice also that $x_1^*x_2=0$ implies $x_2^*x_1=0$. Then $\|CAx-b\|_{2}^{2}=\|CAx-CAA_{(B,C)}^{\tiny\textcircled{\tiny{D}}}b\|_{2}^{2}+\|CAA_{(B,C)}^{\tiny\textcircled{\tiny{D}}}b-b\|_{2}^{2}$, which implies that $\|CAx-b|_{2}^{2}$ is minimal if $\|CAx-CAA_{(B,C)}^{\tiny\textcircled{\tiny{D}}}b\|_{2}^{2}=0$, i.e., $CAx=CAA_{(B,C)}^{\tiny\textcircled{\tiny{D}}}b=P_{(CA)^{k}C}b$. Since $x\in \mathcal{R}((CA)^{k}B)$, it follows from Corollary \ref{4.3} that $x=A_{(B,C)}^{\tiny\textcircled{\tiny{D}}}b$ is the unique solution to ${\rm min} \|CAx-b\|_{2}$ {\rm subject to}   $x\in \mathcal{R}((CA)^{k}B)$. \hfill$\Box$
\end{proof}

\begin{corollary}
Let $A,B,C\in \mathbb{C}_{n,n}$ be such that $A_{(B,C)}^{\tiny\textcircled{\tiny{\#}}}$ exists. Then, for any given $b\in\mathbb{C}^{n}$, $x=A_{(B,C)}^{\tiny\textcircled{\tiny{\#}}}b$ is the unique solution to \begin{center} ${\rm min} \|CAx-b\|_{2}$   {\rm subject to}  $x\in \mathcal{R}(B)$. \end{center}
\end{corollary}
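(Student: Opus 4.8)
The plan is to obtain this statement as the special case $k=0$ of Theorem~\ref{4.4}, exploiting the fact that the $(B,C)$-core inverse is nothing but the $(B,C)$-core-EP inverse of index zero. First I would observe that if $A_{(B,C)}^{\tiny\textcircled{\tiny{\#}}}$ exists, then matching Definition~\ref{3.1} with $k=0$ shows $0\in{\rm I}_{(B,C)}(A)$: here $(CA)^{0}=I$, so the three defining conditions $CAXC=C$, $X\,\mathcal{R}\,B$ and $X\,\mathcal{L}\,C^{*}$ collapse to exactly the defining equations $CAXC=C$, $XS=BS$, $SX=SC^{*}$ of the $(B,C)$-core inverse, as already recorded in the remark that the $(b,c)$-core-EP inverse with ${\rm I}_{(b,c)}(a)=\{0\}$ is precisely the $(b,c)$-core inverse. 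Consequently $A$ is $(B,C)$-core-EP invertible, and by the uniqueness established in Theorem~\ref{3.5} its $(B,C)$-core-EP inverse computed at $k=0$ coincides with $A_{(B,C)}^{\tiny\textcircled{\tiny{\#}}}$.

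Next I would simply instantiate Theorem~\ref{4.4} at this admissible value $k=0$. Since $(CA)^{0}B=B$, the constraint $x\in\mathcal{R}((CA)^{k}B)$ becomes $x\in\mathcal{R}(B)$, and the vector $A_{(B,C)}^{\tiny\textcircled{\tiny{D}}}b$ furnished by the theorem is precisely $A_{(B,C)}^{\tiny\textcircled{\tiny{\#}}}b$. Theorem~\ref{4.4} then guarantees that this $x$ is the unique solution to ${\rm min}\,\|CAx-b\|_{2}$ subject to $x\in\mathcal{R}(B)$, which is exactly the assertion of the corollary.

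There is essentially no hard step here, as all the analytic work (in particular the orthogonality identity $x_{1}^{*}x_{2}=0$ underlying the least-squares argument) has already been carried out in Theorem~\ref{4.4}. The only point demanding care is the verification that $k=0$ genuinely lies in ${\rm I}_{(B,C)}(A)$ whenever the $(B,C)$-core inverse exists, so that Theorem~\ref{4.4} is legitimately applicable with this choice; this is immediate from the definitional collapse described above. One could alternatively prove the corollary from scratch by repeating the argument of Theorem~\ref{4.4} verbatim with every occurrence of $(CA)^{k}B$ replaced by $B$ and every $A_{(B,C)}^{\tiny\textcircled{\tiny{D}}}$ replaced by $A_{(B,C)}^{\tiny\textcircled{\tiny{\#}}}$, but the specialization route is cleaner and avoids redundant computation.
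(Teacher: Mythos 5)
Your proposal is correct and matches the paper's (implicit) argument exactly: the paper states this corollary without a separate proof, as the $k=0$ specialization of Theorem \ref{4.4}, relying on the earlier remark that the $(b,c)$-core-EP inverse with $0\in{\rm I}_{(b,c)}(a)$ is precisely the $(b,c)$-core inverse. Your added verification that $0\in{\rm I}_{(B,C)}(A)$ whenever $A_{(B,C)}^{\tiny\textcircled{\tiny{\#}}}$ exists, together with the uniqueness from Theorem \ref{3.5}, is exactly the bridge the paper leaves tacit.
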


\section{Discussion}

The paper defines the $(b,c)$-core-EP inverse of $a$, and exhibits its properties, connections with other types of generalized inverses and applications in a $*$-monoid $S$. It should be noted that one also can define and investigate the dual $(b,c)$-core-EP inverse as a dual notion to the current $(b,c)$-core-EP inverse as follows. For any $a,b,c\in S$, $a$ is called dual $(b,c)$-core-EP invertible if there are some $y\in S$ and $k\in \mathbb{N}$ such that $(ca)^kb=(ca)^kbyab$, $yS=((ca)^kb)^*S$ and $Sy=S(ca)^kc$. So, all results established in this paper for the $(b,c)$-core-EP inverse have dual versions for the dual $(b,c)$-core-EP inverse.

\bigskip
\centerline {\bf Disclosure statement}
\vskip 2mm
No potential conflict of interest was reported by the authors.

\bigskip
\centerline {\bf ACKNOWLEDGMENTS}
\vskip 2mm
The authors sincerely thank Professor Alberto Facchini, University of Padova, Italy for his comments on an earlier version of this manuscript. This research is supported by the National Natural Science Foundation of China (No.
11801124).

\begin{flushleft}
{\bf References}
\end{flushleft}


\begin{thebibliography}{00}

\bibitem{Anderson1974} F.W. Anderson, K.R. Fuller, Rings and categories of modules, Springer, Berlin and New York, 1974

\bibitem{Baksalary2010} O.M. Baksalary, G. Trenkler, Core inverse of matrices, Linear Multilinear Algebra 58 (2010) 681-697.

\bibitem{Ben1976} A. Ben-Israel, Applications of generalized inverses to programming, games and networks, Generalized Inverses and Applications, Academic Press, (1976) 495-523.

\bibitem{Ben-Israel2003} A. Ben-Israel, T.N.E. Greville, Generalized inverses: Theory and Applications, 2nd ed. Springer, New York, 2003.

\bibitem{BD1953} R. Bott, R.J. Duffin, On the algebra of networks, Trans. Amer. Math. Soc. 74 (1953) 99-109.

\bibitem{Brookes2024} M. Brookes, C. Miller, Heights of posets associated with Green's relations on semigroups, J. Algebra, 659 (2024) 109-131.

\bibitem{Dawson1997} E. Dawson, C.K. Wu, Key agreement scheme based on generalised inverses of matrices, Electron Lett 33 (1997) 1210-1211.

\bibitem{Dolinar2019} G. Dolinar, B. Kuzma, J. Marovt, B. Ungor, Properties of core-EP order in rings with involution, Front. Math. China 14 (2019) 715-736.

\bibitem{Drazin2012} M.P. Drazin, A class of outer generalized inverses, Linear Algebra Appl. 436 (2012) 1909-1923.

\bibitem{Drazin2021} M.P. Drazin, Hybrid $(b,c)$-inverses and five finiteness properties in rings, semigroups and categories, Comm. Algebra 49 (2021) 2265-2277.

\bibitem{Drazin2016} M.P. Drazin, Left and right generalized inverses, Linear Algebra Appl. 510 (2016) 64-78.

\bibitem{Drazin1958} M.P. Drazin, Pseudo-inverses in associative rings and semigroups, Amer. Math. Monthly 65 (1958) 506-514.

\bibitem{Elliott2006} R.J. Elliott, J. Van der Hoek, Optimal linear estimation and data fusion, IEEE Trans. Automat. Control 51 (2006) 686-689.

\bibitem{Fredholm1903} I. Fredholm, Sur une classe d'\'{e}quations fonctionnelles, Acta Math. 27 (1903) 365-390.

\bibitem{Gao2013} Y.F Gao, J.L Chen, Pseudo core inverses in rings with involution, Comm. Algebra 61 (2013) 886-891.

\bibitem{Green1951} J.A. Green, On the structure of semigroups, Ann. Math. 54 (1951) 163-172.

\bibitem{Howie1955} J.M. Howie, Fundamentals of semigroup theory, volume 12 of London Mathematical Society Monographs. New Series. The Clarendon Press, Oxford University Press, New York, 1995. Oxford Science Publications.

\bibitem{Hunter2014} J.J. Hunter, Generalized inverses of Markovian kernels in terms of properties of the Markov chain, Linear Algebra Appl. 447 (2014) 38-55.

\bibitem{Koliha1996} J.J. Koliha, A generalized Drazin inverse, Glasgow Math. J. 38 (1996) 367-381.

\bibitem{Kyrchei2021} I. Kyrchei, D. Mos\'{i}c, P.S. Stanimirov\'{i}c, Solvability of new constrained quaternion matrix approximation problems based on Core-EP Inverses, Adv. Appl. Clifford Algebras 31 (2021), DOI:/10.1007/s00006-020-01102-7.

\bibitem{Lam2014} T.Y. Lam, Pace P. Nielsen, Inner inverses and inner annihilators in rings, J. Algebra 397 (2014) 91-110.

\bibitem{Lam2019} T.Y. Lam, Pace P. Nielsen, Jacobson pairs and Bott-Duffin decompositions in rings, Contemp. Math. 727 (2019) 249-267.

\bibitem{Ma2018} H.F. Ma, P.S. Stanimirov\'{i}c, Characterizations, approximation and perturbations of the core-EP inverse, Appl. Math. Comput. 359 (2019) 404-417.

\bibitem{Malik2014} S.B. Malik, N. Thome, On a new generalized inverse for matrices of an arbitrary index, Appl. Math. Comput. 226 (2014) 575-580.

\bibitem{Prasad1992} K. Manjunatha Prasad, R.B. Bapat, The generalized Moore-Penrose inverse, Linear Algebra Appl. 165 (1992) 59-69.

\bibitem{Prasad2014} K. Manjunatha Prasad, K.S. Mohana, Core-EP inverses, Linear Multilinear Algebra 62 (2014) 792-802.

\bibitem{Marquaridt1970} D.W. Marquaridt, Generalized inverses, ridge regression, biased linear estimation, and nonlinear estimation, Technometrics 12 (1970) 591-612.

\bibitem{Mary2011} X. Mary, On generalized inverses and Green's relations, Linear Algebra Appl. 434 (2011) 1836-1844.

\bibitem{Mary2023} X. Mary, On the structure of semigroups whose regular elements are completely regular, Semigroup Forum 107 (2023) 692-717.

\bibitem{Meyer1975} C.D. Meyer, The role of the group generalised inverse in the theory of finite Markov chains, SIAM Rev. 17 (1975) 443-464.

\bibitem{Moore1920} E.H. Moore, On the reciprocal of the general algebraic matrix, Bull. Amer. Math. Soc., 26(1920) 394-395.

\bibitem{Mosic2020} D. Mosi\'{c}, Core-EP inverse in rings with involution. Publ. Math. Debrecen 96 (2020) 427-443.

\bibitem{Mosic20201} D. Mosi\'{c}, P.S. Stanimirovi\'{c}, V.N. Katsikis, Solvability of some constrained matrix approximation problems using core-EP inverses, Comput. Appl. Math. 39 (2020) 311-333.

\bibitem{Mosic2023} D. Mosi\'{c}, H.H. Zhu, L.Y. Wu, The $w$-core-EP inverse in rings with involution, Rev. de la Union Mat. Argentina, (2023),  DOI:10.33044/revuma.3478.

\bibitem{Penrose1955} R. Penrose, A generalized inverse for matrices, Proc. Camb. Phil. Soc. 51 (1955) 406-413.

\bibitem{Rakic2014} D.S. Raki\'{c}, N.C. Din\v{c}i\'{c}, D.S. Djordjevi\'{c}, Group, Moore-Penrose, core and dual core inverse in rings with involution, Linear Algebra Appl. 463 (2014) 115-133.

\bibitem{Sun2001} H.M. Sun, Cryptanalysis of a public-key cryptosystem based on generalized inverses of matrices, IEEE Commun Lett 5 (2001) 61-63.

\bibitem{Wu1998} C.K. Wu, E. Dawson, Generalised inverses in public key cryptosystem design, IEE P-Compit Dig T, 145 (1998) 321-326.

\bibitem{Zhu2024} H.H. Zhu, The $(b,c)$-core inverse and its dual in semigroups with involution, J. Pure Appl. Algebra 228 (2024) 107526.

\bibitem{Zhu2016} H.H. Zhu, J.L. Chen, P. Patr\'{i}cio, Further results on the inverse along an element in semigroups and rings,
Linear Multilinear Algebra 64 (2016) 393-403.

\bibitem{Zhu20231} H.H. Zhu, C.C. Wang, Q.W. Wang, Left $w$-core inverses in rings with involution, Mediterr. J. Math. 20 (2023) 337-356.

\bibitem{Zhu2023} H.H. Zhu, L.Y. Wu, J.L. Chen, A new class of generalized inverses in semigroups and rings with involution, Commun. Algebra 71 (2023) 528-544.

\bibitem{Zhu20232} H.H. Zhu, L.Y. Wu, D. Mos\'{i}c, One-sided $w$-core inverses in rings with an involution, Linear Multilinear Algebra 71 (2023) 528-544.

\bibitem{Zhu20241} H.H. Zhu, D.H. Xu, P. Patr\'{i}cio, D. Mosi\'{c}, The $(B, C)$-core inverse of rectangular matrices, Quaest. Math. 47 (2024) 1-18.

\bibitem{Zhu2015} H.H. Zhu, X.X. Zhang, J.L. Chen, Generalized inverses of a factorization in a ring with involution,  Linear Algebra Appl. 472 (2015) 142-150.

\bibitem{Zhu2019} H.H. Zhu, H.L. Zou, P. Patricio, Generalized inverses and their relations with clean decompositions, J. Algebra Appl. 18  (2019) 1950133.
\end{thebibliography}
\end{document}